\setlist[enumerate]{leftmargin=.5in}
\setlist[itemize]{leftmargin=.5in}
\title{Wasserstein barycenters are NP-hard to compute\thanks{This work was partially supported by NSF Graduate Research Fellowship 1122374, a Siebel PhD Fellowship, and a TwoSigma PhD fellowship.}}
\author{
	Jason M. Altschuler\thanks{Laboratory for Information and Decision Systems (LIDS), Massachusetts Institute of Technology, Cambridge MA 02139 (\email{jasonalt@mit.edu}, \email{eboix@mit.edu}).}
		\and
	Enric Boix-Adser\`a\footnotemark[2]}
\numberwithin{equation}{section}
\newcommand{\cM}{\mathcal{M}}
\newcommand{\cW}{\mathcal{W}}
\newcommand{\R}{\mathbb{R}}
\newcommand{\N}{\mathbb{N}}
\newcommand{\Rn}{\R^n}
\newcommand{\Rntk}{(\R^n)^{\otimes k}}
\newcommand{\Rpntk}{(\R_{\geq 0}^n)^{\otimes k}}
\newcommand*{\E}{\mathbb{E}}
\newcommand*{\poly}{\mathrm{poly}}
\newcommand*{\eps}{\varepsilon}
\DeclareMathOperator*{\argmin}{argmin}
\renewcommand{\leq}{\leqslant}
\renewcommand{\geq}{\geqslant}
\providecommand{\abs}[1]{\left\lvert#1\right\rvert}
\newcommand{\MOT}{\textsf{MOT}}
\newcommand{\Cmax}{C_{\max}}
\newcommand{\Coup}{\cM(\mu_1,\dots,\mu_k)}
\DeclareMathOperator{\optbary}{OPT_{BARY}}
\newcommand{\jvec}{\vec{j}}
\newcommand{\minprob}{\min_{\jvec \in [n]^k} C_{\jvec}}
\DeclarePairedDelimiter{\ceil}{\lceil}{\rceil}
\let\baraccent=\= %
\renewcommand{\=}[1]{\stackrel{#1}{=}} %
\providecommand{\tsc}{\textsc}
\providecommand{\RR}{\mathbb{R}}
\providecommand{\cM}{\mathcal{M}}
\providecommand{\PP}{\mathbb{P}}
\providecommand{\eps}{\varepsilon}
\providecommand{\N}{\mathbb{N}}
\renewcommand{\P}{\mathsf{P}}
\providecommand{\NP}{\mathsf{NP}}
\providecommand{\BPP}{\mathsf{BPP}}
\mathchardef\mhyphen="2D %
\providecommand{\sm}{\setminus}
\newcommand{\interior}[1]{%
	{\kern0pt#1}^{\mathrm{o}}%
}
\newenvironment{fminipage}{\begin{Sbox}\begin{minipage}}{\end{minipage}\end{Sbox}\fbox{\TheSbox}}
\providecommand{\clique}{\tsc{Clique}}
\providecommand{\chub}{\tsc{Cheapest-Hub}}
\providecommand{\chubpq}{\tsc{Cheapest-Hub}_{p,q}}
\newcommand{\val}{F}
\begin{document}

\maketitle

\begin{abstract}
	Computing Wasserstein barycenters (a.k.a. Optimal Transport barycenters) is a fundamental problem in geometry which has recently attracted considerable attention due to many applications in data science. While there exist polynomial-time algorithms in any fixed dimension, all known running times suffer exponentially in the dimension. It is an open question whether this exponential dependence is improvable to a polynomial dependence. This paper proves that unless $\P = \NP$, the answer is no. This uncovers a ``curse of dimensionality'' for Wasserstein barycenter computation which does not occur for Optimal Transport computation. Moreover, our hardness results for computing Wasserstein barycenters extend to approximate computation, to seemingly simple cases of the problem, and to averaging probability distributions in other Optimal Transport metrics.
\end{abstract}

\section{Introduction}\label{sec:intro}

Wasserstein barycenters provide a natural approach for averaging probability distributions in a way that respects their geometry. In words, Wasserstein barycenters are the Riemannian centers of mass (a.k.a. Fr\'echet means) with respect to the Optimal Transport distance~\citep{AguCar11}. More precisely, given probability distributions $\mu_1, \dots, \mu_k$ over $\R^d$ and non-negative weights $\lambda_1, \dots, \lambda_k$ summing to $1$, the corresponding Wasserstein barycenters are the probability distributions $\nu$ over $\R^d$ that minimize
\begin{align}
	\min_{\nu} \sum_{i=1}^k \lambda_i \cW^2(\mu_i,\nu).\label{eq:bary}
\end{align}
Here, $\cW$ denotes the $2$-Wasserstein distance (a.k.a. the standard Optimal Transport distance) between probability distributions~\citep{Vil03}, which we recall is defined as
\begin{align}
	\cW(\mu,\nu) = \left( \inf_{\pi \in \cM(\mu,\nu)} 
	\E_{(X,Y) \sim \pi} \|X-Y\|_2^2 
	\right)^{1/2},
	\nonumber
\end{align}
where $\cM(\mu,\nu)$ is the set of joint distributions with first marginal $\mu$ and second marginal $\nu$.

Wasserstein barycenters have received considerable research attention over the past decade due to their elegant mathematical properties (see, e.g.,~\citep{AguCar11}) and many data-science applications (see, e.g., the surveys~\citep{PeyCut17,panaretos2019statistical}).
For example, illustrative applications include improving Bayesian learning by averaging posterior distributions~\citep{SriLiDun18}, improving sensors by averaging their measurements~\citep{elvander2020multi}, interpolating between shapes by averaging them (viewed as point clouds in Euclidean space)~\citep{solomon2015convolutional}, clustering documents (viewed as distributions over word embeddings)~\citep{ye2017fast,ye2017determining}, multilevel clustering of datasets~\citep{ho2017multilevel,ho2019probabilistic}, and unsupervised representation learning in natural language processing~\citep{singh2020context}. Note that some of these applications are in low-dimensional settings (e.g., graphics, imaging, and physical applications), while others are in high-dimensional settings (e.g., natural language processing, machine learning, and statistics applications). 

A key issue that determines how useful Wasserstein barycenters are in applications is whether they can be computed efficiently. Note that in most computational applications, each measure $\mu_i$ is a discrete distribution: it is a ``point cloud'' over data points. This motivates the following fundamental question, which has remained open despite considerable research attention (see the previous work section). 
\begin{align*}
	\text{\emph{Can Wasserstein barycenters of discrete distributions be computed in polynomial time?}}
\end{align*}
That is, can the optimization problem~\eqref{eq:bary} be solved in time that is polynomial in the number of distributions $k$, the dimension $d$, the maximum support size $n$ of the input distributions $\mu_i$, and the bit complexity $\log U$ of each entry in the input measures and weights? This constitutes a running time that is polynomial in the input size since each discrete measure is naturally described as a list of at most $n$ point locations and the corresponding probability masses. 

\par Recent work has shown that Wasserstein barycenters can in fact be computed in polynomial time in any \emph{fixed} dimension $d$~\citep{AltBoi20bary}. However, the dependence of this algorithm is exponential in $d$, and it is unclear whether this can be improved.\footnote{In the special case of constant $k$, it is possible to avoid the curse of dimensionality by simply using standard LP solvers to solve a well-known Multimarginal Optimal Transport reformulation of the problem~\citep{carlier2010matching,anderes2016discrete,BenCarCut15,AguCar11}. This takes $\poly(n^k,d) = \poly(n,d)$ time for constant $k$.} Exponential dependence in $d$ prohibits scalability of Wasserstein barycenter computation to high-dimensional settings often encountered in data-science applications such as clustering~\citep{ho2017multilevel,ho2019probabilistic,ye2017fast,ye2017determining}, representation learning~\citep{singh2020context}, and more~\citep{xu2018distilled,cohen2020estimating}. Thus it is of both practical and theoretical significance to understand whether there exists a provably accurate and efficient algorithm for computing high-dimensional Wasserstein barycenters.

\subsection{Contributions}\label{ssec:intro:cont}

Here we outline our main results; our techniques are described in the following subsection.

\subsubsection{Computational complexity of Wasserstein barycenters}

This paper resolves the above question: unless $\P = \NP$, the answer is no. This result uncovers a ``curse of dimensionality'' for computing Wasserstein barycenters in light of the aforementioned result that they are polynomial-time computable in any fixed dimension~\citep{AltBoi20bary}. This is perhaps surprising because there is no curse of dimensionality for Optimal Transport computation (that problem is well-known to be polynomial-time computable, see, e.g.,~\citep{schrijver2003combinatorial}). 
This explains why---despite a rapidly growing literature (see the previous work section)---there has been a lack of progress towards developing algorithms that provably compute optimal Wasserstein barycenters in polynomial time.

\begin{theorem}[NP-hardness]\label{thm:np}
	Assuming $\P \neq \NP$, there is no algorithm that, given distributions $\mu_1, \dots, \mu_k$ and uniform weights $\lambda_1, \dots, \lambda_k = 1/k$,
	computes the value of the Wasserstein barycenter problem~\eqref{eq:bary} in $\poly(n,k,d,\log U)$ time.
\end{theorem}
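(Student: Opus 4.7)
The plan is a polynomial-time Karp reduction to Wasserstein barycenter computation from a multipartite graph problem known to be NP-hard, via the standard equivalence between~\eqref{eq:bary} and a Multi-marginal Optimal Transport (MOT) problem. For uniform weights, the inner minimization over $\nu$ can be solved in closed form: given any coupling $\pi$ of $\mu_1,\dots,\mu_k$, the optimal $\nu$ places each atom at the arithmetic mean of the corresponding atoms of $\mu_1,\dots,\mu_k$, and after this substitution~\eqref{eq:bary} becomes, up to a constant factor,
\[
  \MOTBARY(\mu_1,\dots,\mu_k) \defeq \min_{\pi \in \cM(\mu_1,\dots,\mu_k)} \E_{(X_1,\dots,X_k)\sim \pi}\,c(X_1,\dots,X_k), \qquad c(x_1,\dots,x_k) \defeq \tfrac{1}{k^2}\sum_{i<j}\|x_i-x_j\|_2^2.
\]
It therefore suffices to show that $\MOTBARY$ is NP-hard for discrete measures of bit complexity $\poly(n,k,d,\log U)$.

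I would reduce from \partclique: given a graph $G$ on $V = V_1 \sqcup \cdots \sqcup V_k$ with $|V_i| = n$, decide whether there exist $v_i \in V_i$ such that $\{v_1,\dots,v_k\}$ forms a $k$-clique. Assume $G$ is regular, which is a WLOG restriction. Embed each vertex $v$ via its vertex--edge incidence vector $x_v \defeq \mathbf{1}_{\{e \in E(G) \,:\, v \in e\}} \in \{0,1\}^{|E(G)|}$; then $\|x_u - x_v\|_2^2$ takes exactly one of two values, small when $\{u,v\} \in E(G)$ and larger otherwise. With $\mu_i \defeq \tfrac{1}{n}\sum_{v\in V_i}\delta_{x_v}$, the per-tuple cost $c(x_{v_1},\dots,x_{v_k})$ depends only on the number of edges spanned by $\{v_1,\dots,v_k\}$, and is minimized precisely when $\{v_1,\dots,v_k\}$ is a partitioned $k$-clique. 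Consequently, $\MOTBARY$ attains its per-tuple lower bound exactly when the optimal coupling is supported on clique-tuples.

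The main obstacle I anticipate is the LP--IP gap intrinsic to MOT: a coupling $\pi$ can be a fractional combination of tuples, so the mere existence of one $k$-clique in $G$ does not automatically yield a clique-supported coupling of the size-$n$ marginals. I would bridge this gap in one of two ways. Route (i): reduce instead from a packing/cover variant such as $k$-dimensional matching, which is NP-hard for $k \geq 3$ and whose YES instances naturally correspond to integer decompositions of the uniform marginals into disjoint clique-tuples. Route (ii): attach auxiliary ``anchor'' atoms at exponentially large distance to each $\mu_i$, which forces the optimal coupling to concentrate on a single tuple, effectively reducing $\MOTBARY$ to the integer minimization $\MINBARY(\mu_1,\dots,\mu_k) \defeq \min_{(v_1,\dots,v_k)} c(x_{v_1},\dots,x_{v_k})$ that inherits NP-hardness directly from \partclique. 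Route (ii) is the cleaner approach, since it decouples the fractional-versus-integer tension intrinsic to MOT from the combinatorial graph reduction itself.
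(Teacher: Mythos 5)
Your first half is sound and closely parallels the paper: rewriting the barycenter value as an $\MOT$ problem whose cost tensor is $C_{\jvec} = \min_y \frac1k\sum_i\|x_{i,j_i}-y\|_2^2 = \frac{1}{k^2}\sum_{i<i'}\|x_{i,j_i}-x_{i',j_{i'}}\|_2^2$, and then showing that the \emph{minimum-entry} problem $\min_{\jvec}C_{\jvec}$ is NP-hard via a regular-graph clique instance embedded by edge-incidence vectors. Your use of $\partclique$ with disjoint parts is a legitimate variant of the paper's construction (the paper instead takes $k$ copies of the same vertex set and uses a tensor-product-with-$K_k$ embedding precisely to keep repeated vertices far apart; with disjoint parts this non-distinctness issue disappears), and the "WLOG regular" step is fine, as the paper itself proves regular \clique{} is NP-hard.

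The genuine gap is the bridge from hardness of $\min_{\jvec}C_{\jvec}$ to hardness of the $\MOT$/barycenter \emph{value}, and neither of your two routes closes it. Route (ii) is based on a false premise: a coupling $\pi\in\cM(\mu_1,\dots,\mu_k)$ must reproduce every marginal, so if each $\mu_i$ has $n\geq 2$ atoms of positive mass, no choice of costs or faraway ``anchor'' atoms can force $\pi$ to concentrate on a single tuple --- every atom's mass must be transported somewhere, so the optimal value is always an average over at least $n$ tuples per marginal (e.g., if all anchors sit at one distant point, the anchor mass couples to itself at zero cost and the residual real mass is again a uniform-marginal $\MOT$, i.e., exactly the original problem). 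Consequently, with uniform marginals over your embedded points, the barycenter value measures the best \emph{fractional} decomposition of the marginals into dense tuples (a clique-packing-type LP), not the existence of a single $k$-clique; this is exactly the LP--IP tension you flagged, and it is not decoupled by anchors. Route (i) runs into the same tension (a NO instance of $k$-dimensional matching can still admit a fractional perfect matching onto cheap tuples, so the $\MOT$ value does not certify a NO), plus a representability problem: under the pairwise squared-distance cost, the set of cheap tuples is forced to be the set of cliques of a graph, and an arbitrary triple system of a matching instance is not of this form. What the paper uses instead, and what your argument is missing, is the Turing reduction of Proposition~\ref{prop:min} (from~\citep{AltBoi20mothard}): $\min_{\jvec}C_{\jvec} = \min_{\mu_1,\dots,\mu_k\in\Delta_n}\MOT_C(\mu_1,\dots,\mu_k)$ is an exact convex relaxation whose objective is evaluated by an $\MOT$ oracle, so polynomially many \emph{adaptively chosen} barycenter-value computations (not one cleverly designed instance) suffice to recover the minimum entry. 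Without that tool, or a genuinely new single-shot argument, your reduction does not establish Theorem~\ref{thm:np}.
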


\par Moreover, this hardness extends even to computation of \emph{approximate} Wasserstein barycenters. This extension requires the slightly stronger yet standard complexity-theoretic assumption $\NP \not\subset \BPP$, which in words is the statement that $\NP$-hard problems do not admit polynomial-time randomized algorithms; see the preliminaries for a formal definition. This result is formally stated as follows. Below, let $R$ be an upper bound on the squared diameter of the supports of the measures $\mu_i$; any running time must depend on $R$ and the accuracy $\eps$ through the scale-invariant ratio $R/\eps$. 

\begin{theorem}[Inapproximability]\label{thm:inapprox}
	Assuming $\NP \not\subset \BPP$, there is no randomized algorithm that, given distributions $\mu_1, \dots, \mu_k$ and uniform weights $\lambda_1, \dots, \lambda_k = 1/k$, computes the value of the Wasserstein barycenter problem~\eqref{eq:bary} to $\eps$ additive accuracy with probability at least $2/3$ in $\poly(n,k,d,\log U, R/\eps)$ time.
\end{theorem}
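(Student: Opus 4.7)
The plan is to upgrade the NP-hardness reduction of \Cref{thm:np} into a gap-producing reduction, and then note that the gap survives randomized approximation. The preamble hints (e.g., \texttt{\textbackslash partclique}, \texttt{\textbackslash chub}) suggest that \Cref{thm:np} reduces from an NP-hard combinatorial problem (such as Partitioned-Clique, through an intermediate Cheapest-Hub formulation) by constructing a barycenter instance whose optimal value decides the source instance. Because the source problem has a $\{0,1\}$ YES/NO structure, the crucial step is to quantify how integrality of the constructed instance produces a numerical gap between YES-values and NO-values of the Wasserstein barycenter objective.

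Concretely, I would proceed as follows. First, recall or re-derive the reduction from \Cref{thm:np} and identify two thresholds $A_{\text{yes}}$ and $A_{\text{no}}$ such that YES-instances of the source problem map to barycenter instances with optimum at most $A_{\text{yes}}$, while NO-instances map to barycenter instances with optimum at least $A_{\text{no}}$. Second, prove the gap bound $\Delta \defeq A_{\text{no}} - A_{\text{yes}} \geq 1/\poly(n,k,d,\log U)$. The natural route is to argue that the reduction produces point locations with polynomially bounded rational (or integer) coordinates, so the optimum value of the underlying multimarginal LP formulation lives on a lattice with spacing at least the reciprocal of a polynomially bounded common denominator; any genuine YES/NO gap in the source problem therefore propagates to at least an inverse-polynomial separation in the barycenter objective. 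Third, bound the squared support diameter $R$ by a polynomial in the instance parameters (this follows from the explicit coordinates in the reduction).

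Given these ingredients, the inapproximability argument is immediate. Suppose a randomized algorithm $\cA$ computes the barycenter value to additive accuracy $\eps$ with success probability $\geq 2/3$ in $\poly(n,k,d,\log U, R/\eps)$ time. Set $\eps \defeq \Delta/3$; since $\Delta \geq 1/\poly$ and $R \leq \poly$, the quantity $R/\eps$ is polynomial in the source instance size. Thresholding $\cA$'s output at $A_{\text{yes}} + \eps$ therefore distinguishes YES from NO instances with probability $\geq 2/3$ in randomized polynomial time. This places the NP-hard source problem in $\BPP$, contradicting $\NP \not\subset \BPP$.

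The principal technical obstacle is the second step: establishing $\Delta \geq 1/\poly$. A priori, even a reduction that is ``exact'' in integer arithmetic might fail to produce a usable gap if, for example, the reduction rescales costs by exponentially large factors, or if near-optimal fractional multimarginal couplings realize values strictly between $A_{\text{yes}}$ and $A_{\text{no}}$. To handle these issues I would (i) verify that all rational quantities introduced by the reduction have denominators bounded by $\poly(n,k,d,\log U)$, so clearing denominators yields an integer-valued objective up to a controlled global scaling, and (ii) argue via the multimarginal LP reformulation of~\eqref{eq:bary} that optimal couplings can be taken to lie at vertices of a polytope with polynomially bounded encoding length, so the optimum itself is a rational number with polynomially bounded denominator. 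Once this quantitative rationality is in hand, the gap amplification and the $\BPP$ reduction above complete the proof.
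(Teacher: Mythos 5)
Your high-level shape (gap $\Rightarrow$ set $\eps=\Delta/3$ $\Rightarrow$ threshold $\Rightarrow$ $\BPP$ contradiction) is fine, but it rests on an assumption about the structure of the \Cref{thm:np} reduction that is not true and that you do not establish: you assume there is a \emph{many-one} (Karp) reduction producing a single barycenter instance whose optimal value is at most $A_{\text{yes}}$ on YES instances and at least $A_{\text{no}}$ on NO instances. The paper's hardness does not have this form. There, the barycenter value is identified with an $\MOT$ value $\min_{P\in\Coup}\langle C,P\rangle$ for a particular cost tensor $C$ (Proposition~\ref{prop:mot}), and the NP-hardness flows through an \emph{oracle (Turing) reduction}: the toolkit of Propositions~\ref{prop:min} and~\ref{prop:amin} shows that polynomially many $\MOT_C$-value queries, with \emph{varying marginals}, suffice to compute the minimum entry $\minprob$, i.e.\ the $\chub_{2,2}$ value, and the clique gap ($2/k$ between clique and non-clique tuples, Lemma~\ref{lem:minoraclenphard}) lives at the level of $\chub_{2,2}$, not at the level of any single barycenter value. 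For a fixed instance with fixed marginals (e.g.\ uniform), $\langle C,P\rangle$ is an average forced by the marginal constraints, not the minimum entry of $C$, so there are no evident thresholds $A_{\text{yes}},A_{\text{no}}$ for the barycenter objective itself; to carry out your plan you would either have to construct a genuinely new direct gap reduction, or do what the paper does: invoke the approximate-oracle reduction of Proposition~\ref{prop:amin}, which converts an $\eps$-accurate randomized barycenter/$\MOT$ oracle (after boosting its success probability so a union bound over the polynomially many queries applies) into an $\eps(nk)^{\alpha}$-accurate estimate of $\chub_{2,2}$, and then apply the $0.99/k$-inapproximability of $\chub_{2,2}$ with $\eps$ chosen of order $1/(k(nk)^{\alpha})$.

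The fallback ``rationality/lattice'' argument for the gap is also quantitatively unsound. The $\MOT$ LP has $n^k$ variables and only $nk$ equality constraints, so a vertex is supported on at most $nk$ coordinates whose values solve an (at most) $nk\times nk$ system with $\{0,1\}$ constraint coefficients; by Cramer's rule and the Hadamard bound the denominators of a vertex, and hence of the optimal value, are only bounded \emph{exponentially} in $nk$, not polynomially. So even granting a many-one reduction with distinct YES/NO values, ``the optimum lies on a $1/\poly$-spaced lattice'' does not follow, and the inverse-polynomial separation must come from somewhere else. In the paper it comes directly from the geometry of the embedding: for $p=q=2$ the inner minimization has the closed form yielding value $M-\tfrac{2}{k}|E(j_1,\dots,j_k)|$, which exhibits an explicit $2/k$ gap between clique and non-clique tuples. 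Your bounding of $R$ by a polynomial and the final thresholding/boosting/$\BPP$ step are fine once a gap at the correct level (the $\chub$ problem reached via Proposition~\ref{prop:amin}, or a genuinely constructed direct gap instance) is in place.
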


We make two remarks about these results. First, since Theorems~\ref{thm:np} and~\ref{thm:inapprox} establish hardness for (approximately) computing the optimal \emph{value} of the barycenter problem, as an immediate corollary they preclude finding an (approximately) optimal \emph{solution}. Specifically, since $\cW(\nu,\mu_i)$ is computable in polynomial time (e.g., via linear programming~\citep{schrijver2003combinatorial}) whenever $\nu$ has polynomial-size support, these results imply that: unless $\P = \NP$, there is no polynomial-time algorithm for computing a barycenter $\nu$ with polynomial-size support.\footnote{Note that there always \emph{exists} a barycenter with support of size $O(nk)$~\citep{anderes2016discrete}. Our result shows that if $\P \neq \NP$, then such a barycenter cannot be efficiently computed.}

\par Second, these hardness results hold even in seemingly simple settings. For example, our results are written for the case where all weights $\lambda_1 = \dots = \lambda_k  = 1/k$ are uniform. Our construction also sets all measures $\mu_1, \dots, \mu_k$ to be supported on $n$ points each with all support points having $\{0,1\}$-valued coordinates\footnote{For the generalized Wasserstein barycenter problem in the cases $q \in \{1,\infty\}$, our construction has points with $\{-1,0,1\}$-valued coordinates.}, and can be readily extended to the case where $\mu_1, \dots, \mu_k$ are uniform distributions (Theorem~\ref{thm:inapproxuniform}).

\subsubsection{Computational complexity of generalized Wasserstein barycenters}

We further demonstrate that our $\NP$-hardness results capture a robust phenomenon about averaging high-dimensional distributions by showing that these results extend to other important notions of averaging in Wasserstein space that are studied in the literature, see, e.g.,~\citep{CutDou14,CarObeOud15,gramfort2015fast,lin2020fixed}. Specifically, we show that for any fixed $p \in [1,\infty)$ and $q \in [1,\infty]$, the following ``generalized Wasserstein barycenter problem'' is similarly computationally hard: 
\begin{align}
	\min_{\nu} \sum_{i=1}^k \lambda_i \cW_{p,q}^p(\mu_i,\nu),
	\label{eq:baryext}
\end{align}
where $\cW_{p,q}$ is the $p$-Wasserstein distance over the metric space $(\R^d, \ell_q)$, i.e.,
\begin{align}
	\cW_{p,q}(\mu,\nu) = \left( \inf_{\pi \in \cM(\mu,\nu)} 
	\E_{(X,Y) \sim \pi} \|X-Y\|_q^p 
	\right)^{1/p}.
	\nonumber
\end{align}
Clearly this generalized Wasserstein barycenter problem~\eqref{eq:baryext} captures the standard Wasserstein barycenter problem~\eqref{eq:bary} in the case that $p=q=2$ (we use the shorthand $\cW$ for $\cW_{2,2}$). The reason that this problem is often studied in this generality is two-fold. First, the degree of freedom $p \in [1,\infty)$ enables adjusting the notion of average to be more or less affected by outliers. For instance, when $p=1$ this problem recovers ``Wasserstein geometric medians'' which are known to be robust to outliers (e.g., they have a breakdown point of 50\%~\citep{FKJ}), whereas when $p$ tends to $\infty$ this problem becomes finding $\nu$ that is ``fair'' to all input measures in that its distance is small to all simultaneously. The flexibility to choose $p$ based on the downstream application is a common desiderata in robust statistics, as has been argued ever since the influential paper of Fr\'echet in 1948~\citep{frechet1948elements}. Second, the degree of freedom $q \in [1,\infty]$ enables handling applications where the geometry is non-Euclidean since this corresponds to measuring the transportation cost via any $\ell_q$ norm.

\par Formally, we show the following inapproximability result for this generalized Wasserstein problem~\eqref{eq:baryext} for the entire range of possible parameters $p \in [1,\infty)$ and $q\in [1,\infty]$. Note that we only show hardness of approximation here because exact computation is hard for trivial reasons: for general $p,q \neq 2$, the bit complexity of the optimal value~\eqref{eq:baryext} might not be polynomially bounded in the input size (e.g., it could be the case that the inputs are rational but the optimal value is irrational). In what follows, the measures $\mu_1, \dots, \mu_k$ are still each over $n$ atoms in $\R^d$. However, now the appropriate quantity generalizing $R$ is $R_{p,q}$, the $p$-th power of the $\ell_q$ norm diameter of these supports; this ensures that $R_{p,q}/\eps$ is scale-invariant.

\begin{theorem}[Generalization of Theorem~\ref{thm:inapprox}]\label{thm:inapproxext}
	Fix $p \in [1, \infty)$ and $q \in [1, \infty]$. Assuming $\NP \not\subset \BPP$, there does not exist a randomized algorithm that, given distributions $\mu_1, \dots, \mu_k$ and uniform weights $\lambda_1, \dots, \lambda_k = 1/k$, computes the value of the $(p,q)$-Wasserstein barycenter problem~\eqref{eq:baryext} to $\eps$ additive accuracy with probability at least $2/3$ in $\poly(n,k,d,\log U, R_{p,q}/\eps)$ time.
\end{theorem}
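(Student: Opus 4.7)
The plan is to reuse the combinatorial reduction underlying Theorem~\ref{thm:inapprox}, engineering the supports of the input measures so that the $(p,q)$-Wasserstein cost faithfully encodes the same combinatorial gap that the Euclidean construction exploits. Concretely, starting from an instance of an NP-hard problem such as \partclique, the goal is to build in polynomial time measures $\mu_1, \ldots, \mu_k$ with $\{0,1\}$- or $\{-1,0,1\}$-valued support points such that the optimal value of~\eqref{eq:baryext} differs by $\Omega(R_{p,q}/\poly(n,k,d))$ between YES and NO instances. Any $\eps$-additive approximation with $\eps$ chosen inside this gap then decides the underlying NP-hard problem, and the assumption $\NP \not\subset \BPP$ rules this out after standard amplification, yielding the inapproximability conclusion.

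For $q \in [1, \infty)$, I would keep the supports inside the Boolean hypercube $\{0,1\}^d$, exactly as in Theorem~\ref{thm:inapprox}. The crucial identity is that for all $x, y \in \{0,1\}^d$ one has $\|x - y\|_q^p = H(x,y)^{p/q}$, where $H$ denotes Hamming distance, so the $(p,q)$-cost is a strictly monotone transformation of the Hamming cost already handled by the base reduction. In YES instances an optimal transport plan pairs every atom at small Hamming distance, while in NO instances at least a polynomial number of atoms must be paired at significantly larger Hamming distance. Applying $t \mapsto t^{p/q}$ and doing the appropriate dimension counting preserves a polynomial additive gap in the $(p,q)$-cost, which suffices for the reduction after rescaling the target accuracy $\eps$.

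For $q = \infty$ one must refine to $\{-1,0,1\}^d$-valued supports, because the $\ell_\infty$ norm retains only the maximum coordinate difference and a Boolean construction cannot distinguish atoms differing in one coordinate from atoms differing in many. The fix I would implement uses signed ``witness'' coordinate blocks, one per combinatorial decision of the underlying NP-hard instance, designed so that the maximum coordinate difference inside a block is a zero--one indicator of whether that decision is correctly made; replicating each block polynomially many times and summing the contributions across atoms and measures then amplifies these per-block indicators into the required additive gap in~\eqref{eq:baryext}. This $q = \infty$ case is the main obstacle: since the $\ell_\infty$ norm performs no additive aggregation inside itself, the hardness information must be distributed across sufficiently many atoms and measures so that it survives aggregation outside the norm rather than inside it. Once this construction is in place, the argument goes through uniformly for every fixed $p \in [1,\infty)$ and $q \in [1,\infty]$, establishing Theorem~\ref{thm:inapproxext}.
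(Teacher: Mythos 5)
Your proposal contains a genuine gap in the case $q \in [1,\infty)$, stemming from a confusion about where the inner minimization lives. You argue that since $\|x-y\|_q^p = H(x,y)^{p/q}$ for $x,y \in \{0,1\}^d$, the $(p,q)$-barycenter cost is a monotone transformation of a Hamming-distance cost and therefore the $\{0,1\}$ embedding from the $p=q=2$ case transfers automatically. But the relevant objective is $F_{p,q}(z_1,\dots,z_k) = \min_{y \in \R^d}\sum_i \|z_i - y\|_q^p$, where the ``hub'' $y$ ranges over all of $\R^d$, not over the hypercube. Once $y$ is continuous, your identity no longer applies, and the value of the minimum depends on $p,q$ in a way that is not a monotone reparametrization of the Euclidean case. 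The paper makes this failure explicit: for $q=1$, the very same $\{0,1\}$ embedding $\phi$ yields the constant value $F_{p,1}(\phi(1,v_1),\dots,\phi(k,v_k)) = k(D(k-1))^p$ for \emph{all} choices of vertices once $k>4$, because the optimizer is $y = \vec 0$. So the construction you propose for $q \in [1,\infty)$ is provably unable to distinguish cliques from non-cliques at $q=1$, and hence cannot decide the underlying $\NP$-hard problem.

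The paper handles this by splitting into three regimes with separate arguments. For $q \in (1,\infty)$ it retains the $\{0,1\}$ embedding but replaces the closed-form computation with an induction on the number of induced edges via $(k,s,t)$-collections (Lemmas~\ref{lem:yilowerbound}--\ref{lem:cliquesequal}), exploiting first-order optimality conditions on the continuous $y$ to show each added edge drops $F_{p,q}$ by an inverse-polynomial amount. For $q=1$ and $q=\infty$ it constructs new $\{-1,0,1\}$-valued embeddings ($\psi$ and $\xi$) specifically designed to defeat the degeneracy of the continuous hub. Your $q = \infty$ sketch is in the right spirit (signed coordinate blocks), but your $q \in [1,\infty)$ argument is not repairable as written: you would need either to adopt the inductive $(k,s,t)$-collection machinery for $q\in(1,\infty)$ and switch to a signed embedding for $q=1$, or else produce a different mechanism that controls the continuous minimizer $y$ uniformly over all $q$ --- something your Hamming-distance identity does not provide.
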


\subsection{Techniques}\label{ssec:intro:tech}

\paragraph*{Starting point: reduction from $\chub$ to Wasserstein barycenters} Our starting point is the combination of two known results. The first result is the equivalence of the generalized Wasserstein barycenter problem~\eqref{eq:baryext} and a Multimarginal Optimal Transport ($\MOT$) problem with a particular cost tensor~\citep{carlier2010matching,anderes2016discrete,BenCarCut15,AguCar11}.
Note that while certain $\MOT$ problems are known to be $\NP$-hard~\citep{AltBoi20mothard}, the relevant $\MOT$ cost tensor here does not fall under any known such classifications, and thus the computational complexity of this problem does not follow from previous work.
The second result is a toolkit recently developed in~\citep{AltBoi20mothard} for understanding the computational complexity of an $\MOT$ problem based on its cost tensor. See the preliminaries section \S\ref{sec:prelim} for details on these two results. Together, they imply that to prove hardness of (approximating) the generalized Wasserstein barycenter problem~\eqref{eq:baryext}, it suffices to prove hardness of (approximating) the following problem---which we call $\chub$.

\begin{definition}[$\chubpq$]\label{def:chub}
	Given points $\{x_{i,j}\}_{i \in [k], j \in [n]} \subset \R^d$ as input, the $\chubpq$ problem is to compute 
	\[
	\min_{(j_1, \dots, j_k) \in [n]^k} 
	F_{p,q}(x_{1,j_1}, \dots, x_{k,j_k}),
	\]
	where
	\begin{align}
		F_{p,q}(z_1, \dots, z_k) := 
		\min_{y \in \R^d} 
		\sum_{i=1}^k 
		\|z_i - y\|_q^p.
		\label{eq:F-def}
	\end{align}
\end{definition}

\paragraph*{Showing $\NP$-hardness of approximating $\chub$}
This is our main technical contribution. As is typical with $\NP$-hardness proofs, the first challenge is to determine what $\NP$-hard problem to reduce from. The motivation behind our reduction is the geometric interpretation of the $\chub$ problem (which also explains our naming of $\chub$). This geometric interpretation is: given $k$ sets $S_i \subset \R^d$, each consisting of $n$ points $S_i = \{x_{i,j}\}_{j \in [n]}$, find one point from each set so as to minimize the average distance (measured in the relevant geometry via the function $F_{p,q}$) to the closest ``hub'' $y \in \R^d$. See Figure~\ref{fig:chub} for an illustration. On an intuitive level, this question of finding $k$ points which are close somewhat resembles the $k$-$\clique$ problem, which is the task of finding a set of $k$ vertices in a graph such that all pairs of vertices are close in the sense of being adjacent. Motivated by this intuitive observation, we show a reduction from $k$-$\clique$ to $\chub$. 

\begin{figure}
	\begin{center}
		\includegraphics[scale=0.25,trim={400 300 600 300},clip]{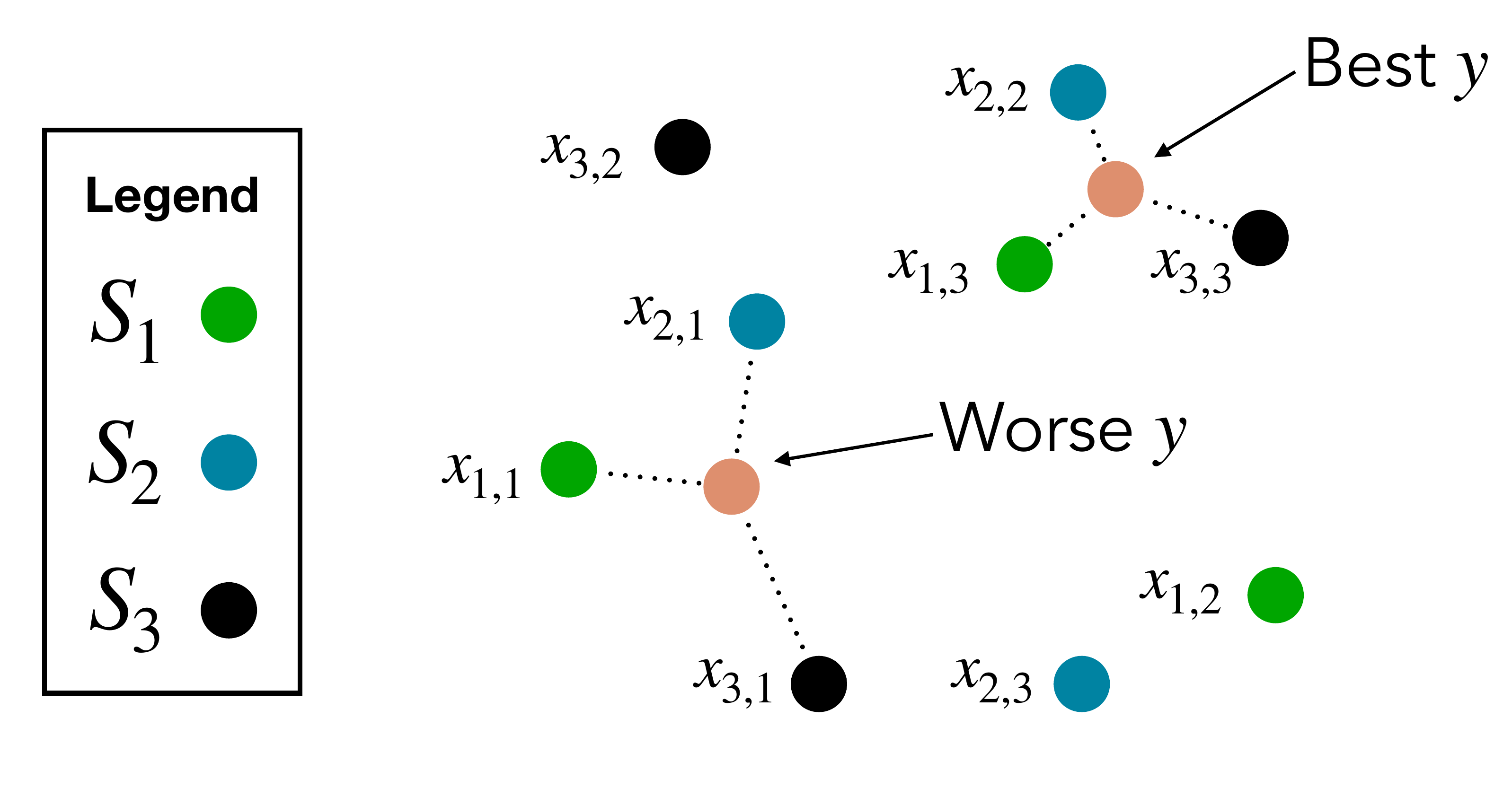}
	\end{center}
	\caption{Illustration of the $\chub$ problem in the case of $k=3$ sets, each consisting of $n=3$ points. The points in set $S_i$ are denoted by $\{x_{i,1}, x_{i,2}, x_{i,3}\}$ and displayed in the same color. The $\chub$ problem is to choose one point per set so as to minimize the average distance (measured in the relevant geometry by the function $F_{p,q}$) to the closest ``hub'' $y \in \R^d$. Top: the points $\{x_{1,3}, x_{2,2}, x_{3,3}\}$ corresponding to tuple $(j_1,j_2,j_3) = (3,2,3)$ yield the best hub $y$. Bottom: the points $\{x_{1,1}, x_{2,1}, x_{3,1}\}$ corresponding to tuple $(j_1,j_2,j_3) = (1,1,1)$ yield a suboptimal hub. 
	}
	\label{fig:chub}
\end{figure}

\par A key part of this reduction is figuring out how to appropriately embed the (combinatorial) adjacency properties of a graph $G$ into a (geometric) point configuration. Briefly, we show that, given an $n$-vertex graph $G = (V,E)$, one can efficiently compute points $\{x_{i,j}\}_{i \in [k], j \in [n]} \subset \R^d$ such that the value of the corresponding $\chub$ problem indicates whether $G$ has a clique of size $k$. Our embedding ensures that $G$ has a clique of size $k$ if and only if there are $k$ points $x_{1,j_1}, \dots, x_{k,j_k}$ that are sufficiently close to each other. Roughly speaking, we achieve this by setting the $nk$ points $\{x_{i,j}\}_{i \in [k], j \in [n]}$ to be an embedding of $k$ copies of the vertex set $V$ of the graph $G$, where adjacent vertices are embedded as close points in $\R^d$.

\paragraph*{Intuition for the special case of $p=q=2$ (the standard Wasserstein barycenter)} For concreteness, let us explain our proof in the case $p=q=2$ (a.k.a. the standard Wasserstein barycenter). The general case of $p \in [1,\infty)$ and $q \in [1,\infty]$ follows a similar high-level approach but is significantly more involved, as described below.

\par In this case, the minimization over $y$ in $\chub_{2,2}$ has a simple closed-form. By direct calculation\footnote{This equivalence requires all points $x_{i,j}$ to have the same norm, which our construction ensures.}, the $\chub_{2,2}$ problem is equivalent to the problem of finding $k$ maximally correlated vectors, one from each of the $k$ sets $S_i = \{x_{i,j}\}_{j \in [n]}$, i.e.,
\begin{align}
	\max_{(j_1, \dots, j_k) \in [n]^k} \sum_{i < i' \in [k]} \langle x_{i,j_i}, x_{i',j_{i'}} \rangle.
	\label{eq:tech-22:2}
\end{align}
Our embedding is based off the following observation. Consider $x_{i,j} \in \{0,1\}^{|E|}$ to be the edge-indicator vector of vertex $j \in [n] \cong V$, that is, has $e$-th entry equal to $1$ if vertex $j$ is an endpoint of edge $e$. Then $\langle x_{i,j_i},x_{i',j_{i'}} \rangle = \mathds{1}[\text{vertices }j_i\text{ and }j_{i'}\text{ are adjacent}]$ for any pair of distinct indices $j_i$ and $j_{i'}$, hence 
\begin{align*}
	\max_{\substack{(j_1, \dots, j_k) \in [n]^k \\ \text{s.t. } j_i \neq j_{i'}, \; \forall i \neq i'}} \sum_{i < i' \in [k]} \langle x_{i,j_i}, x_{i',j_{i'}} \rangle
	&=
	\max_{\substack{(j_1, \dots, j_k) \in [n]^k \\ \text{s.t. } j_i \neq j_{i'}, \; \forall i \neq i'}}
	\#\left( \text{edges between the vertices } j_1, \dots, j_k \right)
\end{align*}
is equal to $\binom{k}{2}$ if $G$ contains a $k$-clique, or otherwise is at most $\binom{k}{2} - 1$. Therefore if the optimization problem~\eqref{eq:tech-22:2} were restricted to tuples $(j_1,\dots,j_k) \in [n]^k$ with \emph{distinct} entries, then this would suffice for the reduction because a maximizing tuple (up to any error less than $1/2$) would yield a maximum clique size. Dealing with non-distinctness requires a more careful embedding into higher ambient dimension in which $x_{i,j}$ and $x_{i',j}$ are far from each other rather than identical when $i \neq i'$; details in \S\ref{ssec:bary:proof}.

\paragraph*{Obstacles for general case of $p \in [1,\infty)$ and $q \in [1,\infty]$} Although our proofs for the cases beyond $p=q=2$ exploit the same intuitive connection between $\chub$ and $\clique$, these proofs are significantly more involved and in some cases require altogether different embeddings. A core difficulty is that unlike the $p=q=2$ case, in general there is no closed-form solution for the minimization over $y$ in the $\chub$ optimization problem. Thus we cannot analytically compute the value $F_{p,q}(x_{1,j_1}, \dots, x_{k,j_k})$ and from that argue that this value is small or not depending on whether the set of vertices $\{j_1,\dots,j_k\} \subset V$ is a $k$-clique in $G$. In fact, for general $p$ and $q$, the value $F_{p,q}(x_{1,j_1}, \dots, x_{k,j_k})$ is not even determined by the number of edges between the vertices $\{j_1, \dots, j_k\}$. 

At a high level, we overcome this obstacle via an inductive argument on the number of edges in the induced subgraph formed by the vertices $\{j_1, \dots, j_k\}$. Specifically, our key lemma states that for a fixed tuple $(j_1,\dots,j_k) \in [n]^k$, the value $F_{p,q}(x_{1,j_1},\dots,x_{k,j_k})$ significantly decreases if $G$ is changed in a way that ``adds an edge'' to this subgraph. By iteratively applying this lemma (and also separately showing that all $k$-cliques admit the same value), we conclude that $k$-cliques have a significantly lower value than non-$k$-cliques. Note, however, that proving this key lemma again runs into the issue of a lack of closed-form solution to the problem defining $F_{p,q}$, but by arguing about the specific local update to the decision variable we are able to reason about how the value of this convex optimization changes at each inductive step.

A further challenge that should also be mentioned is that different embeddings are needed in the cases of $q \in \{1,\infty\}$. Indeed, in both cases, the value $F_{p,q}(x_{1,j_1}, \dots, x_{k,j_k})$ is a \emph{constant} independent of the number of edges between the vertices $\{j_1, \dots, j_k\}$ if one uses the same embedding to construct the points $\{x_{i,j}\}$ as we do in the $q \in (1,\infty)$ case. See \S\ref{app:q1} and \S\ref{app:qinf} for details on the embeddings needed for these cases.

\subsection{Related work}
The many applications of Wasserstein barycenters have motivated an extensive literature that approaches this problem from both the algorithmic and hardness sides. Here we contextualize our results with the literature.

\subsubsection{Algorithms for the Wasserstein barycenter problem}
Many algorithms have been proposed. However, all of them have running time which scales exponentially in at least one of the input parameters, and/or do not provably compute arbitrarily close approximations, described below. The purpose of this paper is to show that this is unavoidable in the sense that under standard complexity-theoretic assumptions, there is no algorithm that provably computes Wasserstein barycenters in polynomial time.

\paragraph*{Algorithms with exponential dependence in $d$}  A popular approach is to use ``fixed-support approximations''; that is, assume that the barycenter is supported on  a guessed set $S \subset \R^d$ of points, and then optimize over the corresponding weights, see, e.g.,~\citep{CutDou14,BenCarCut15,solomon2015convolutional,CarObeOud15,staib2017parallel,KroDviDvuetal19,janati2020debiased,lin2020fixed} among many others. The point of this fixed-support approximation is that it reduces the barycenter problem to a polynomial-size LP---which can then be solved efficiently using out-of-the-box LP solvers or specially-tailored approaches such as entropic regularization---if the set $S$ has polynomial size. However, this ``if'' is the key issue: obtaining a barycenter that is $\eps$-additively approximate for the objective~\eqref{eq:bary} requires taking $S$ to be an $\eps$-cover of the space. In particular, this means that all fixed-support methods require $\Omega((R/\eps)^{d})$ time. Such running times have two issues. First is the exponential scaling in the dimension $d$. Second is that they only compute to ``low precision'' $\eps$ due to the $1/\eps$ dependence. While not fixed-support approaches, the Frank-Wolfe algorithm of~\citep{luise2019sinkhorn} and the Functional Gradient Descent algorithm of~\citep{shen2020sinkhorn} also suffer from the same two issues.

\par Recent work has shown that in any fixed dimension $d$, Wasserstein barycenters can in fact be computed exactly in $\poly(n,k,\log U)$ time~\citep{AltBoi20bary}. However, the running time dependence on dimension is still exponential: for non-constant $d$, the running time is $(nk)^d \cdot \poly(n,k,\log U)$. Theorem~\ref{thm:np} of this paper shows that this is optimal in the sense that unless $\P = \NP$, the exponential dependence on $d$ cannot be improved to polynomial.

\paragraph*{Algorithms with exponential dependence in $k$} A well-known approach that avoids exponential dependence on the dimension $d$ is to reformulate the Wasserstein barycenter as a linear program (LP) and then solve it. However, this LP has $n^k$ variables (see, e.g.,~\citep{BenCarCut15,anderes2016discrete}), so applying a standard LP solver out-of-the-box requires $\Omega(n^k)$ time which is exponential in $k$.

\paragraph*{$2$-approximation}~\citep{Bor17} proposes the following algorithm: fix the support of $\nu$ to be the union of the supports of the input measures $\mu_i$, and optimize the corresponding $nk$ weights via an LP solver.~\citep{Bor17} shows that this yields a multiplicative $2$-approximation to the optimal barycenter problem~\eqref{eq:bary} in $\poly(n,k,d,\log U)$ time, and that this approximation factor is tight (i.e., there exist inputs for which this algorithm yields objective exactly twice the optimal). This is the polynomial-time algorithm with the best provable approximation guarantees we are aware of for the barycenter problem in high dimensions. In fact, our Theorem~\ref{thm:inapprox} implies that this polynomial-time algorithm is nearly optimal in the sense that this multiplicative $2$-approximation factor is unimprovable to a $(1\plus \eps)$-approximation under standard complexity theory assumptions.

\subsubsection{Hardness of the sparsest Wasserstein barycenter} Perhaps the most related $\NP$-hardness result is that finding the sparsest\footnote{In~\citep[Theorem 3]{BorPat19}, the $\NP$-hardness is stated for the problem of finding a Wasserstein barycenter with sparsity at most some input integer $N$. This is polynomial-time equivalent to the problem of finding the barycenter with smallest sparsity. Indeed an answer to the latter problem is an answer to the former, and an algorithm for the former problem gives an answer to the latter by running the algorithm on all $N \leq nk-k+1$ (since there always exists a barycenter with sparsity $nk-k+1$~\citep{anderes2016discrete}).} Wasserstein barycenter is $\NP$-hard, even in the setting of $k=3$ uniform measures in dimension $d=2$~\citep{BorPat19}. The key difference from the results in the present paper is that the results of~\citep{BorPat19} apply to the problem of finding the \emph{sparsest} barycenter, and do not imply $\NP$-hardness of finding a barycenter with sparsity that is polynomial in the input size, which is typically the goal in applications. For example, for the setting of $k=3$ measures, while the result of~\citep{BorPat19} shows $\NP$-hardness of finding a barycenter with sparsity $n$, a barycenter with sparsity $O(n)$ can be found in $\poly(n, \log U)$ time by using off-the-shelf LP solvers on the MOT formulation of the Wasserstein barycenter problem~\citep{anderes2016discrete,BenCarCut15}. Similarly, for any fixed dimension $d \geq 2$, while the result of~\citep{BorPat19} shows $\NP$-hardness of finding a barycenter with sparsity $n$, a barycenter with sparsity $O(nk)$ can be found in $\poly(n,k, \log U)$ time for arbitrary $k$~\citep{AltBoi20bary}.

\subsubsection{Multimarginal Optimal Transport} It is well-known that the (generalized) Wasserstein barycenter problem is equivalent to a Multimarginal Optimal Transport ($\MOT$) problem with a particular cost tensor $C$~\citep{carlier2010matching,anderes2016discrete,BenCarCut15,AguCar11}, details recalled in the preliminaries section \S\ref{ssec:prelim:mot}. Briefly, $\MOT$ is an exponential-size LP in the sense that it is an LP with $n^k$ variables. Since this is exponentially large in the input size of the barycenter problem, applying LP solvers out-of-the-box takes $\Omega(n^k)$ time which is not polynomial in $n$ and $k$. However, it is important to emphasize that the fact that this $\MOT$ problem has exponentially many variables does \emph{not} in itself imply that it cannot be solved in polynomial time. Whether an $\MOT$ problem can be solved efficiently depends on the cost $C$; indeed, a recent line of work has shown that for certain ``structured'' cost tensors $C$, the corresponding $\MOT$ problems can be solved in time that is polynomial in $n$ and $k$~\citep{AltBoi20motalg,BenCarCut15,CarObeOud15,haasler2020multi,haasler2021graphical,elvander2020multi,nenna2016numerical,benamou2019generalized,benamou2016numerical,benamou2019entropy}.

\par An obvious first requirement for an $\MOT$ problem to be solvable in polynomial time is that the cost tensor $C$ is input implicitly, since if $C$ is input explicitly then even reading the input takes $n^k$ time since $C$ has $n^k$ entries. The $\MOT$ cost $C$ corresponding to the barycenter problem satisfies this: it can be input implicitly since each entry of $C$ can be computed efficiently on-the-fly, see \S\ref{ssec:prelim:mot}. However, it is important to emphasize that just because a cost tensor $C$ has a concise implicit representation does \emph{not} imply that the corresponding $\MOT$ problem can be solved in $\poly(n,k)$ time. (See~\citep{AltBoi20mothard} for $\NP$-hard examples.)

\par This has motivated a systematic investigation into what structure makes $\MOT$ tractable. Recent work has identified a necessary~\citep{AltBoi20mothard} and sufficient~\citep{AltBoi20motalg} condition for an $\MOT$ problem to be solvable in $\poly(n,k)$-time solvable: namely, an auxiliary discrete optimization problem depending on $C$ must also be solvable in polynomial time. These two papers respectively use this result to show that for certain commonly arising families of cost tensors, the corresponding $\MOT$ problems are either $\NP$-hard or polynomial-time solvable. 
However, the particular cost $C$ corresponding to the barycenter problem does not fall under any previous hardness results for $\MOT$, and thus we require new techniques.

\subsubsection{Other related work}

\paragraph*{Algorithms based on entropic regularization} The influential paper~\citep{cuturi2013sinkhorn} popularized the use of entropic regularization for large-scale Optimal Transport computation. The use of entropic regularization to compute Wasserstein barycenters was first proposed in~\citep{CutDou14}, which inspired a long line of work, see, e.g.,~\citep{BenCarCut15,solomon2015convolutional,KroDviDvuetal19,janati2020debiased,lin2020fixed,luise2019sinkhorn,shen2020sinkhorn}. Intuitively, the idea is to regularize the resulting LP by adding $\delta$ times an entropy cost, for $\delta$ small. 
This makes the LP strongly convex and easier to optimize. Previous work has sought to design barycenter algorithms by judiciously choosing $\delta$ and designing specialized algorithms for the resulting $\delta$-regularized barycenter problem.
An immediate corollary of our main results (Theorems~\ref{thm:inapprox} and~\ref{thm:inapproxext}) is that entropic regularization does not help for computing barycenters in high dimensions: under standard complexity assumptions, there is no efficient algorithm for the (generalized) Wasserstein barycenter problem regardless of whether one uses entropic regularization.

\paragraph*{Continuous distributions} While this paper and much of the literature focuses on computing Wasserstein barycenters of discrete distributions, there is also an interesting line of work on computing barycenters of continous distributions. This continuous setting has several additional computational challenges, such as how to even represent $\mu_i$ and $\nu$ concisely, and how to compute the Wasserstein distance between them efficiently. Due to these computational issues, the literature on barycenters of continuous distributions typically restricts to Gaussians, in which case specialized algorithms can be designed; see, e.g.,~\citep{chewi2020gradient,AlvEtAl16}.

\subsection{Outline}

In \S\ref{sec:prelim} we establish preliminaries and notation. We prove our main results in \S\ref{sec:hard} and \S\ref{sec:ext}. Specifically, in \S\ref{sec:hard} we prove hardness of computation for the standard Wasserstein barycenter problem (Theorems~\ref{thm:np} and~\ref{thm:inapprox}), and in \S\ref{sec:ext} we prove hardness of computation for the generalized Wasserstein barycenter problem (Theorem~\ref{thm:inapproxext}). While the former is implicit from the latter, we provide this separation for expository purposes since the proof for the standard barycenter problem is less involved. In \S\ref{sec:conc} we conclude with future research directions.

\section{Preliminaries}\label{sec:prelim}

\subsection{Notation}\label{ssec:prelim:notation}

\paragraph*{Barycenters} The optimal value of the generalized barycenter problem~\eqref{eq:baryext} is denoted by $\optbary$. We show that the claimed hardness results hold even in the special case where the weights $\lambda_1 = \dots = \lambda_k = 1/k$ are uniform, 
and thus henceforth specialize solely to this case. The atoms in the support of distribution $\mu_i$ are denoted by $x_{i,1}, \dots, x_{i,n} \in \R^d$. We abuse notation slightly by writing $\mu_i$ to denote this discrete distribution as well as the vector of probability masses in the simplex $\Delta_n = \{p \in \R_{\geq 0}^n : \sum_{i=1}^n p_i = 1\}$ over the $n$ atoms $\{x_{i,j}\}_{j=1}^n$ in any fixed ordering.
The Euclidean norm is denoted by $\|\cdot\|$, and the dot product is denoted by $\langle \cdot, \cdot \rangle$. For shorthand, we often write $\sum_{i < i' \in [k]}$ to denote the sum over pairs $(i,i')$ satisfying $1 \leq i < i' \leq k$.

\paragraph*{Tensors} We denote the $k$-fold product space $\Rn \otimes \cdots \otimes \Rn$ by $\Rntk$, and similarly for $\Rpntk$. The $i$-th marginal of a tensor $P \in \Rntk$ is the vector $m_i(P) \in \Rn$ with $j$-th entry $[m_i(P)]_j = \sum_{j_1,\ldots,j_{i-1},j_{i+1},\ldots,j_k} P_{j_1,\ldots,j_{i-1},j,j_{i+1},\ldots,j_k}$.
The set $\{1, \dots, n\}$ is denoted by $[n]$, and the $k$-fold product space $[n] \times \cdots \times [n]$ is denoted by $[n]^k$. For shorthand, we often denote an element of $[n]^k$ by $\jvec = (j_1,\dots,j_k)$. We denote the maximum modulus entry of a tensor $C$ by $\Cmax = \max_{\jvec} |C_{\jvec}|$, and the inner product of two tensors $A,B \in \Rntk$ by $\langle A, B \rangle = \sum_{\jvec} A_{\jvec}B_{\jvec}$.

\paragraph*{Bit complexity} For simplicity, we ignore discussion of bit complexity throughout since the constructed ``hard'' instances of (generalized) Wasserstein barycenters are such that all support points $x_{i,j} \in \R^d$ have $\{-1,0,1\}$-valued entries, and thus clearly have polynomial bit complexity. 

\paragraph*{Complexity theory}~We recall the definition of the complexity class $\BPP$, which appears in the statement of Theorems~\ref{thm:inapprox} and \ref{thm:inapproxext}. A language $L \subset \{0,1\}^*$ is in $\BPP$ if there exists a polynomial-time randomized Turing Machine $M$ such that for every $x \in \{0,1\}^*$, the machine $M$ decides whether $x$ is in the language with error probability at most $1/3$: i.e., $\PP [M(x) = \mathds{1}(x \in L)] \geq 2/3$, where the probability is over the internal randomness used by $M$. Under standard cryptographic assumptions, it is known that $\NP \not\subset \BPP$; see, e.g., Chapter 20 of \citep{arora2009computational}.

\subsection{Multimarginal Optimal Transport formulation}\label{ssec:prelim:mot}

We make use of the well-known fact that the (generalized) Wasserstein barycenter problem has an equivalent formulation as a certain exponential-size linear program, namely a certain Multimarginal Optimal Transport ($\MOT$) problem~\citep{carlier2010matching,anderes2016discrete,BenCarCut15,AguCar11}. We recall the details of this formulation here. 

\par $\MOT$ is the problem of linear progamming over joint probability distributions with fixed marginals. More precisely, given measures $\mu_1, \dots, \mu_k \in \Delta_n$ and a cost tensor $C \in \Rntk$, the corresponding $\MOT$ problem is
\begin{align}
	\min_{P \in \Coup} \langle C, P \rangle.
	\label{eq:MOT}
\end{align}
Above, $\Coup$ denotes the transportation polytope $\{ P \in \Rpntk : m_i(P) = \mu_i, \; \forall i \in [k]\}$, a well-studied object in the optimization and combinatorics communities, see, e.g.,~\citep{de2014combinatorics}.

\begin{prop}[$\MOT$ formulation]\label{prop:mot} 
	Suppose $p \in [1,\infty)$ and $q \in [1,\infty]$. The value of the generalized barycenter problem~\eqref{eq:baryext} for measures $\mu_1, \dots, \mu_k$ is equal to the value of the $\MOT$ problem~\eqref{eq:MOT} with marginals $\mu_1, \dots, \mu_k$ and cost tensor $C \in \Rntk$ that has entries
	\begin{equation}
		C_{\jvec} = \min_{y \in \RR^d}  \sum_{i=1}^k \lambda_i \|x_{i,j_i} - y\|_q^p.\label{eq:barycostextension}
	\end{equation}
\end{prop}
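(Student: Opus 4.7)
The plan is to establish the two inequalities between the optimal values by a standard ``glueing'' argument in one direction and a direct construction in the other. First note that the min in the definition~\eqref{eq:barycostextension} is attained for each $\jvec$, since $y \mapsto \sum_i \lambda_i\|x_{i,j_i}-y\|_q^p$ is continuous and coercive on $\R^d$ (as $p \geq 1$ and $\|\cdot\|_q$ is a norm). So for each $\jvec$ we may fix a minimizer $y_{\jvec}$, with $C_{\jvec} = \sum_i \lambda_i \|x_{i,j_i} - y_{\jvec}\|_q^p$.

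For the direction $\optbary \geq \mathrm{val}(\MOT)$, I would take any feasible $\nu$ for the barycenter problem and any optimal couplings $\pi_i \in \cM(\mu_i,\nu)$. Glue them using a joint distribution on $(X_1,\dots,X_k,Y)$ where $Y \sim \nu$ and, conditional on $Y$, the $X_i$ are drawn independently from the conditionals of the $\pi_i$; then $(X_i,Y) \sim \pi_i$ marginally. Let $P \in \Rpntk$ be the law of $(X_1,\dots,X_k)$; its $i$-th marginal is $\mu_i$, so $P \in \Coup$. A short calculation gives
\begin{align*}
\sum_{i=1}^k \lambda_i \cW_{p,q}^p(\mu_i,\nu)
= \E\!\left[\sum_{i=1}^k \lambda_i \|X_i - Y\|_q^p\right]
\geq \E\!\left[\min_{y} \sum_{i=1}^k \lambda_i \|X_i - y\|_q^p\right]
= \langle C, P\rangle,
\end{align*}
where the last equality uses the definition~\eqref{eq:barycostextension}. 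Taking infima over $\nu$ on the left and over $P$ on the right yields $\optbary \geq \mathrm{val}(\MOT)$.

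For the reverse direction $\optbary \leq \mathrm{val}(\MOT)$, I would take any $P \in \Coup$ and build a candidate $\nu$ explicitly as the pushforward of $P$ under the map $\jvec \mapsto y_{\jvec}$. For each $i$, the measure $\pi_i$ that places mass $P_{\jvec}$ on the pair $(x_{i,j_i}, y_{\jvec})$ is a coupling in $\cM(\mu_i,\nu)$: its first marginal has mass $\sum_{\jvec : j_i = j} P_{\jvec} = [m_i(P)]_j = \mu_i(j)$, and its second marginal is $\nu$ by construction. Hence
\begin{align*}
\sum_{i=1}^k \lambda_i \cW_{p,q}^p(\mu_i,\nu)
\leq \sum_{i=1}^k \lambda_i \sum_{\jvec} P_{\jvec}\, \|x_{i,j_i} - y_{\jvec}\|_q^p
= \sum_{\jvec} P_{\jvec} \sum_{i=1}^k \lambda_i \|x_{i,j_i} - y_{\jvec}\|_q^p
= \langle C, P\rangle.
\end{align*}
Taking the infimum over $P$ on the right and over $\nu$ on the left gives $\optbary \leq \mathrm{val}(\MOT)$.

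There is no real obstacle here; both directions are short once the glueing step is set up correctly. The only mild subtlety worth flagging is attainment of the inner minimum defining $C_{\jvec}$, which is handled by the coercivity argument above and ensures that the explicit pushforward construction in the reverse direction is well-defined.
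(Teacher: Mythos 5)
Your proof is correct and follows essentially the route the paper has in mind: your second direction (pushforward of $P$ under the map $\jvec \mapsto y_{\jvec}$) is exactly the construction the paper sketches, and your first direction is the standard gluing argument that the paper omits and delegates to the cited reference (Carlier--Ekeland, Section 6). Nothing further is needed; the coercivity remark ensuring attainment of the minimizer $y_{\jvec}$ is the right detail to flag.
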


Although details of this proposition's proof are not necessary for the rest of the paper, we briefly recall the main idea in order to provide the reader intuition behind the connections between these two problems. In fact, more is true about this connection: not only are the optimal values of these two problems equal (Proposition~\ref{prop:mot}), but also their solutions are in ``correspondence'' in the sense that, given a solution to either one of these two problems, one can construct a solution to the other problem with the same objective value. One direction of this transformation is particularly simple to explain: if $P \in \Coup$ is an optimal solution to this $\MOT$ problem, then the pushforward of $P$ under the map\footnote{Although this does not necessarily define a map when $p=1$ (e.g., since the geometric median of points in $\R^d$ is not necessarily unique), taking any minimizer $y$ suffices.} $(X_1, \dots, X_k) \mapsto \argmin_{y \in \R^d} \sum_{i=1}^k \lambda_i \|X_i - y\|_q^p$ is an optimal generalized barycenter with the same value. The other direction intuitively inverts this transformation, but is somewhat more involved to state precisely; for brevity, we refer the reader to~\citep[Section 6]{carlier2010matching} for a formal proof.

\subsection{Toolbox for analyzing the complexity of specific Multimarginal Optimal Transport problems}\label{ssec:prelim:min}

Here, we recall the recent results of \citep{AltBoi20mothard} that reduce (approximately) computing the minimum entry $\minprob$ of a tensor $C$, to (approximately) computing the optimal value of the $\MOT$ problem with cost $C$. We emphasize that this applies to an arbitrary cost tensor $C \in \Rntk$; not just those of the form~\eqref{eq:barycostextension} corresponding to the (generalized) Wasserstein barycenter problem.
The benefit of this reduction is that $\minprob$ is a combinatorial optimization problem that is phrased in a more amenable way for proving NP-hardness. 
Below, for a cost tensor $C \in \Rntk$, let $\MOT_C$ denote the problem of computing the optimal value~\eqref{eq:MOT} of $\MOT$ with cost $C$, given marginal distributions $\mu_1, \dots, \mu_k \in \Delta_n$.

\begin{prop}[Simplified version of Theorem 3.1 of~\citep{AltBoi20mothard}]\label{prop:min}
	There is a deterministic algorithm that, given access to an oracle solving $\MOT_C$, computes $\minprob$ in $\poly(n,k)$ oracle queries and additional time.
\end{prop}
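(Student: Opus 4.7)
The plan is to reduce computing $\minprob$ to a convex minimization problem that can be solved via $\poly(n,k)$ value queries to the $\MOT_C$ oracle. The central identity I would establish is
\begin{equation*}
\minprob \;=\; \min_{\mu_1,\ldots,\mu_k \in \Delta_n} \MOT_C(\mu_1,\ldots,\mu_k).
\end{equation*}
The ``$\leq$'' direction follows by choosing $\mu_i = \delta_{j_i^*}$, where $\jvec^* \in \arg\min_{\jvec} C_{\jvec}$: the marginal constraints then force the unique feasible plan to be $P = \delta_{\jvec^*}$, so the oracle returns $C_{\jvec^*} = \minprob$. The ``$\geq$'' direction is immediate since for any feasible $P \in \cM(\mu_1,\ldots,\mu_k)$, the tensor $P$ is a probability distribution on $[n]^k$, so $\langle C,P \rangle \geq \min_\jvec C_\jvec$.

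Next I would observe that $(\mu_1,\ldots,\mu_k) \mapsto \MOT_C(\mu_1,\ldots,\mu_k)$ is convex on the product of simplices $\Delta_n^k \subset \R^{nk}$. Indeed, if $P^{(a)}, P^{(b)}$ are optimal plans for marginals $\mu^{(a)}, \mu^{(b)}$, then for any $t \in [0,1]$ the plan $(1-t)P^{(a)} + t P^{(b)}$ is feasible for the interpolated marginals $(1-t)\mu^{(a)} + t\mu^{(b)}$ and attains objective value $(1-t)\MOT_C(\mu^{(a)}) + t\MOT_C(\mu^{(b)})$, giving the convexity inequality. Since the domain has dimension $O(nk)$, I would then apply the ellipsoid method (or any comparable polynomial-time cutting-plane algorithm) to the convex objective $\MOT_C(\cdot)$ over $\Delta_n^k$, which produces an approximation of the minimum in $\poly(n,k)$ queries to $\MOT_C$. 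Finally, because $\minprob$ equals some entry $C_\jvec$ and thus inherits the polynomial bit complexity of the input, its exact value can be recovered from a sufficiently accurate approximation by rounding.

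The main obstacle is that standard convex optimization algorithms require (approximate) subgradient information, while the stated oracle only provides function values. This can be handled in at least two ways: (i) by estimating subgradients of $\MOT_C(\cdot)$ via finite differences of the value oracle, taking the step size small enough that polynomial bit complexity is preserved; or (ii) by extracting subgradients directly from LP-dual optimal variables of $\MOT_C$, which are the appropriate objects since the dual variables $\phi_i$ are (by envelope arguments) subgradients of $\MOT_C$ in the $i$-th marginal. A more elegant, directly combinatorial reduction tailored to the $\MOT$ structure is developed in~\citep{AltBoi20mothard}; we refer the reader there for the precise argument, and use the proposition here as a black box.
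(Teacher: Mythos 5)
Your proposal takes essentially the same route as the paper, which does not prove this proposition but cites~\citep{AltBoi20mothard} and sketches precisely your argument: the exact convex relaxation $\minprob = \min_{\mu_1,\dots,\mu_k \in \Delta_n} \MOT_C(\mu_1,\dots,\mu_k)$, whose objective is evaluated by a single oracle call and which is then minimized using polynomially many value queries to the oracle. The one step both your sketch and the paper's gloss over---how to run a cutting-plane or zeroth-order method with only a value oracle for this nonsmooth, piecewise-linear value function, where finite differences at kinks need not yield valid subgradients and dual variables are not provided by the stated oracle---is exactly what the cited work handles in detail, and like the paper you correctly defer to~\citep{AltBoi20mothard} for that argument.
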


\begin{prop}[Simplified version of Theorem 3.2 of~\citep{AltBoi20mothard}]\label{prop:amin}
	There exists a constant $\alpha \in \N$ and a randomized algorithm that, given $\eps > 0$ and access to an oracle solving $\MOT_C$ to additive accuracy $\eps$, computes $\minprob$ up to $\eps (nk)^{\alpha}$ additive accuracy with probability $2/3$ in $\poly(n,k,\Cmax/\eps)$ oracle queries and additional time.
\end{prop}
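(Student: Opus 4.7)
The plan is to reduce computing $\minprob$ to approximately minimizing a convex function over the product of simplices $\Delta_n^k$, where function-value queries can be simulated using the given approximate $\MOT_C$ oracle. First, I would establish the identity
\begin{align*}
	\minprob = \min_{\mu_1, \dots, \mu_k \in \Delta_n} \MOT_C(\mu_1, \dots, \mu_k).
\end{align*}
The direction $\leq$ holds because every coupling $P \in \Coup$ is a probability distribution, so $\langle C, P \rangle \geq \minprob \cdot \sum_{\jvec} P_{\jvec} = \minprob$. The direction $\geq$ holds by taking $\mu_i = \delta_{j_i^*}$ at an argmin tuple $\jvec^*$, which forces the unique feasible coupling to be $\delta_{\jvec^*}$ and gives MOT value exactly $C_{\jvec^*} = \minprob$. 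Moreover, $\mu \mapsto \MOT_C(\mu_1,\dots,\mu_k)$ is convex: given $\mu = \alpha \mu' + (1 - \alpha) \mu''$ with $P', P''$ feasible at $\mu', \mu''$, the combination $\alpha P' + (1-\alpha) P''$ is feasible at $\mu$, so $\MOT_C(\mu) \leq \alpha \MOT_C(\mu') + (1-\alpha) \MOT_C(\mu'')$.

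Second, I would invoke a standard noisy zeroth-order convex optimization procedure---for instance, a randomized smoothing variant of (sub)gradient descent, or a cutting-plane method combined with finite-difference subgradient estimates---to approximately minimize the convex function $\MOT_C$ over the convex domain $\Delta_n^k$ (of dimension $O(nk)$) using only value queries to the $\eps$-accurate oracle. Known complexity results for such noisy zeroth-order methods require $\poly(nk, \Cmax/\eps)$ oracle queries and additional time, and output a value within $\eps \cdot \poly(nk)$ of $\min_\mu \MOT_C(\mu) = \minprob$. Instantiating the hidden polynomial yields the claimed bound of $\eps (nk)^{\alpha}$ for some absolute constant $\alpha \in \N$.

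The main obstacle I expect is carefully tracking how the oracle's additive error $\eps$ propagates through the optimization procedure. Approximating subgradients of $\MOT_C$ by finite differences with step $\eta$ amplifies oracle noise from $\eps$ to roughly $\eps/\eta$, which must be balanced against the $O(\eta)$ finite-difference truncation error and the iteration count of the convex optimization routine (which depends polynomially on the ambient dimension $nk$ and on the scale $\Cmax$). Randomization enters naturally because $\MOT_C$ is only piecewise linear---its minimum is attained at extreme points of $\Delta_n^k$, namely delta marginals, where the function is non-smooth---so smoothing or random-sampling techniques are convenient for handling the non-smoothness at the optimum, and they account for the $2/3$ success probability in the statement.
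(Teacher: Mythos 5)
Your proposal is correct and follows essentially the same approach as the paper's sketch: identify $\min_{\mu_1,\dots,\mu_k \in \Delta_n}\MOT_C(\mu_1,\dots,\mu_k)$ as an exact convex relaxation of $\minprob$ whose objective is evaluated via the $\MOT_C$ oracle, then run a noisy zeroth-order convex optimization routine over $\Delta_n^k$. The paper itself only recalls this high-level argument from~\citep{AltBoi20mothard} rather than reproving it, and your sketch matches that description, including the identification of error propagation through the zeroth-order method as the main technical point requiring care.
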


Although details of these propositions' proofs are not necessary for the rest of the paper, we briefly recall the main ideas in order to make the paper more self-contained. The key lemma is that the discrete optimization problem $\minprob$ admits an exact convex relaxation with the following property: evaluating the objective function of this convex optimization problem amounts to solving an $\MOT_C$ problem. Exactness ensures that solving this convex optimization problem suffices to solve $\minprob$. The evaluation property ensures that one can solve the convex optimization problem using zero-th order optimization algorithms, where in each iteration the objective function is evaluated by solving an auxiliary $\MOT_C$ problem. By appealing to standard results about zero-th order optimization algorithms and analyzing the smoothness properties of the relevant convex optimization problem, it follows that polynomially many iterations suffice, and thus only polynomially many $\MOT_C$ computations suffice. If the $\MOT_C$ oracle used for the function evaluations are exact (resp., approximate), then the computed solution to $\minprob$ is exact (resp., approximate).

\subsection{Clique}\label{ssec:prelim:clique}

For a graph $G = (V,E)$ and vertices $v_1, \dots, v_k \in V$, we denote the number of edges in the induced subgraph of $G$ with these vertices by $|E(v_1, \dots, v_k)| = \sum_{i < i'\in [k]} \mathds{1}[(v_i,v_{i'}) \in E]$. In the sequel, it is convenient to consider this quantity $|E(v_1, \dots, v_k)|$ in the general case where the vertices are not necessarily distinct; the definition extends as written to this case, and counts the number of edges with multiplicity. 

\par We make use of the well-known fact that the $\clique$ decision problem is $\NP$-hard~\citep{karp1972reducibility}. Recall that a $k$-clique in an undirected graph $G = (V,E)$ is a subset of vertices $S \subseteq V$ of size $|S| = k$ such that $|E(S)| = \binom{k}{2}$. The $\clique$ decision problem is: given an undirected graph $G = (V, E)$ and an integer $k > 0$, decide whether $G$ contains a $k$-clique. For technical reasons, it is convenient to use the fact that this $\clique$ problem is $\NP$-hard even in certain special cases; a proof is provided in Appendix~\ref{supp:clique}.

\begin{prop}[Hardness of $\clique$]\label{prop:clique-special}
	Assuming $\P \neq \NP$, there does not exist an algorithm that, given an integer $k > 0$ and a graph $G$ on $n$ vertices, decides whether $G$ contains a clique of size $k$ in $\poly(n,k)$ time. This is unchanged if $G$ is assumed regular and $k$ is even.
\end{prop}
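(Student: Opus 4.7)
The plan is to reduce from the standard $\clique$ problem, which is $\NP$-hard by Karp~\citep{karp1972reducibility}. Since $\clique$ with $k \leq 2$ is trivially polynomial-time decidable, we may restrict to inputs $(G, k)$ with $k \geq 3$, and perform the reduction in two stages: first make the graph regular, then ensure the clique parameter is even.

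For the first stage, given $G$ on $n$ vertices with maximum degree $\Delta$, I build $G'$ by taking $2(\Delta+1)$ disjoint copies of $G$ and padding degrees as follows. For each vertex $v \in V(G)$ of degree $d_v$, partition its $2(\Delta+1)$ copies into two sets $A_v, B_v$ of size $\Delta + 1$ each, and add any $(\Delta - d_v)$-regular bipartite graph between $A_v$ and $B_v$; such a graph exists because $\Delta - d_v \leq \Delta + 1$. Every vertex of $G'$ then has degree exactly $\Delta$, so $G'$ is regular. Moreover, the only cross-copy edges have the form $v^{(i)}v^{(j)}$ for a common vertex $v$, and each such padding graph is bipartite, hence triangle-free. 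Consequently, for $k \geq 3$ any $k$-clique of $G'$ must lie entirely inside a single copy $G^{(i)}$, so $G$ has a $k$-clique iff $G'$ does.

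For the second stage, given a $\Delta$-regular graph $H$ on $m$ vertices, I set $H'' = H \vee H$, the graph join of two disjoint copies of $H$ (i.e., also add every cross-copy edge). Then $H''$ is $(\Delta + m)$-regular, and since every cross-copy pair is adjacent, any clique of $H''$ is the disjoint union of a clique from each copy; hence $H''$ has a $2k$-clique iff $H$ has a $k$-clique, and $2k$ is even. Composing the two stages maps an instance $(G, k)$ of $\clique$ with $k \geq 3$ to an instance $(G'', 2k)$ in polynomial time, with $G''$ regular and $2k$ even, establishing the proposition. The main obstacle is ensuring the degree padding in the first stage creates no spurious $k$-cliques; this is handled by using bipartite padding, which together with the absence of cross-copy edges between distinct vertices forces any $k$-clique (for $k \geq 3$) to stay inside a single copy of $G$.
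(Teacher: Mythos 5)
Your proof is correct, and it diverges from the paper's on the regularization step while coinciding on the parity step. The paper handles regularity by citation: it invokes the known $\NP$-hardness of maximum independent set in regular graphs~\citep{mohar2001face} and passes to the complement graph, which is again regular; your argument instead builds a self-contained degree-padding gadget, taking $2(\Delta+1)$ disjoint copies of $G$ and adding a $(\Delta-d_v)$-regular bipartite graph between the two halves of the copy-set of each vertex $v$. The crux of your construction---that for $k \geq 3$ no $k$-clique can straddle copies, because cross-copy edges only join copies of a common vertex and the bipartite padding is triangle-free---is argued correctly, and the restriction to $k \geq 3$ is harmless since $\clique$ with $k \leq 2$ is trivial. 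Your second stage (the join of two copies of a regular graph, turning a $k$-clique question into a $2k$-clique question while preserving regularity) is exactly the paper's evenness trick; as in the paper, the converse direction implicitly uses the pigeonhole observation that a $2k$-clique splits as cliques of total size $2k$ across the two copies, one of which has size at least $k$, which is immediate. What each approach buys: the paper's proof is shorter but leans on an external hardness result for regular independent set, whereas yours is elementary and self-contained modulo Karp's original $\clique$ hardness~\citep{karp1972reducibility}, at the cost of a (polynomial) quadratic blow-up in the number of vertices and a slightly more delicate correctness argument for the padding gadget.
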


\section{Case of $p=q=2$: standard Wasserstein barycenters}\label{sec:hard}

In this section, we establish the hardness of the standard Wasserstein barycenter problem ($p=q=2$) for exact and approximate computation by proving Theorems~\ref{thm:np} and \ref{thm:inapprox}, respectively. We note that although these results follow from the general setting of $p \in [1,\infty)$ and $q \in [1,\infty]$ studied in Section~\ref{sec:ext}, for expository purposes we isolate here the proof for the case of $p=q=2$ because the general case is significantly more involved. We refer the reader to the techniques section \S\ref{ssec:intro:tech} for a high-level proof overview. 

Our starting point is that by combining the two known reductions recalled in the preliminaries section, showing hardness of (approximately) computing the Wasserstein barycenter problem reduces to showing hardness of (approximately) computing the $\chub_{2,2}$ problem. Establishing the latter hardness statement is therefore the purpose of this section. Note that the $\NP$-hardness result we show for $\chub_{2,2}$ in general dimension $d$ stands in starks contrast to the algorithm of~\citep{AltBoi20bary} which solves $\chub_{2,2}$ exactly in $\poly(n,k)$ time for any fixed dimension $d$.

\begin{lemma}[Inapproximability of $\chub_{2,2}$]\label{lem:minoraclenphard}	
	Given vectors $\{x_{i,j}\}_{i \in [k], j \in [n]} \subseteq \{0,1\}^{d}$, it is $\NP$-hard to compute the value of $\chub_{2,2}$ (see Definition~\ref{def:chub}) to additive error $0.99/k$.
\end{lemma}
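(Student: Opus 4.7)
The plan is to reduce from the $k$-clique problem on regular graphs (which is $\NP$-hard by Proposition~\ref{prop:clique-special}) to approximating $\chub_{2,2}$ to additive error $0.99/k$. The driver is the following closed form: setting $y = \bar z = \frac{1}{k}\sum_i z_i$ gives $F_{2,2}(z_1,\dots,z_k) = \sum_i \|z_i\|^2 - k\|\bar z\|^2$, which (when all $\|z_i\|^2$ share a common value $R$) simplifies to
\[
F_{2,2}(z_1,\dots,z_k) \;=\; (k-1)R \,-\, \frac{2}{k} \sum_{i < i' \in [k]} \langle z_i, z_{i'}\rangle.
\]
So, once the chosen vectors have equal norms, minimizing $F_{2,2}$ reduces to maximizing pairwise inner products, mirroring~\eqref{eq:tech-22:2} in the techniques overview.

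For the embedding, given a $D$-regular graph $G = (V,E)$ with $|V| = n$, let $G'$ denote its $k$-blow-up: the graph on vertex set $V \times [k]$ with an edge between $(j,i)$ and $(j',i')$ precisely when $(j,j') \in E$. For each $(i,j) \in [k] \times V$ define $x_{i,j} \in \{0,1\}^{|E(G')|}$ to be the edge-incidence vector of the vertex $(j,i)$ in $G'$. Every vertex of $G'$ has degree $Dk$, so $\|x_{i,j}\|^2 = Dk$ for all $(i,j)$. For $i \neq i'$, the two vertices $(j,i)$ and $(j',i')$ of $G'$ are distinct, so the only edge of $G'$ that could be incident to both is $\{(j,i),(j',i')\}$, which lies in $E(G')$ iff $(j,j') \in E$. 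This edge is in particular absent whenever $j = j'$ (as $G$ has no self-loops), and so
\[
\langle x_{i,j}, x_{i',j'}\rangle \;=\; \mathds{1}[(j,j') \in E]
\]
equals $1$ for adjacent distinct $j,j'$ and $0$ in every other case---including the troublesome ``same vertex across copies'' case $j=j'$.

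Combining the last two displays gives
\[
F_{2,2}(x_{1,j_1},\dots,x_{k,j_k}) \;=\; (k-1)Dk \,-\, \frac{2}{k}\,|E(j_1,\dots,j_k)|,
\]
with $|E(j_1,\dots,j_k)|$ as defined in \S\ref{ssec:prelim:clique}. Since $|E(j_1,\dots,j_k)| \leq \binom{k}{2}$ with equality iff $\{j_1,\dots,j_k\}$ is a $k$-clique of $G$ (which in particular forces the $j_i$ to be distinct), the minimum of $F_{2,2}$ over $(j_1,\dots,j_k) \in [n]^k$ equals $(k-1)(Dk-1)$ when $G$ contains a $k$-clique and is at least $(k-1)(Dk-1) + 2/k$ otherwise. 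The gap $2/k$ strictly exceeds $2 \cdot 0.99/k$, so any additive-$0.99/k$ approximation to $\chub_{2,2}$ decides the $k$-clique problem on regular graphs, yielding $\NP$-hardness.

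The main conceptual obstacle is designing a $\{0,1\}$-valued embedding in which the ``same vertex across copies'' case contributes no more than the ``adjacent distinct vertices'' case. A naive edge-incidence embedding of $G$ itself fails dramatically because $\langle e_j,e_j\rangle = D$ dwarfs the value $1$ obtained for adjacent distinct vertices, tempting any $F_{2,2}$-minimizer to collapse all $k$ choices onto a single vertex. The blow-up trick resolves this cleanly: because $(j,i)$ and $(j,i')$ are \emph{distinct} vertices of $G'$ even though they come from the same vertex of $G$, the only way two chosen indicator vectors can share a coordinate is through a genuine $G$-edge between their underlying vertices.
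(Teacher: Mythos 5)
Your proposal is correct and follows essentially the same route as the paper: reduce from $k$-clique on regular graphs, embed the $k$ copies of the vertex set as $\{0,1\}$-valued indicator vectors with equal norms and cross-copy inner products $\mathds{1}[(j,j') \in E]$, and use the closed form at the mean to get a gap of $2/k$. The only (immaterial) difference is the embedding: you use edge-incidence vectors of the blow-up of $G$ (norm $Dk$), whereas the paper uses the edge-indicator embedding of the tensor product of $K_k$ with $G$ (norm $D(k-1)$); both satisfy the same two key properties, so the rest of the argument is identical.
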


\par Following, we make precise how Lemma~\ref{lem:minoraclenphard} implies Theorems~\ref{thm:np} and \ref{thm:inapprox}. The rest of this section is then devoted to proving this new key Lemma~\ref{lem:minoraclenphard}.

\begin{proof}[Proof of Theorem~\ref{thm:np}]
	If there is a $\poly(n,k,d)$-time algorithm for computing $\optbary$, then by Proposition~\ref{prop:mot} there is a $\poly(n,k,d)$-time algorithm for computing $\MOT_C$ with the cost tensor $C$ given by~\eqref{eq:barycostextension}, thus by Proposition~\ref{prop:min} there is a $\poly(n,k,d)$-time algorithm for $\chub_{2,2}$. Assuming $\P \neq \NP$, this contradicts the $\NP$-hardness of $\chub_{2,2}$ in Lemma~\ref{lem:minoraclenphard}. 
\end{proof}

\begin{proof}[Proof of Theorem~\ref{thm:inapprox}]
	If there is a $\poly(n,k,d,R/\eps)$-time randomized algorithm for $\eps$-approximating $\optbary$ with probability $2/3$, then by Proposition~\ref{prop:mot} there is a $\poly(n,k,d,R/\eps)$-time randomized algorithm for $\eps$-approximating $\MOT_C$ with probability $2/3$. By a standard boosting argument---namely repeating this algorithm $\log 1/\delta$ times, taking the median, and applying a Chernoff bound---this implies a $\poly(n,k,d,R/\eps, \log 1/ \delta)$-time randomized algorithm for $\eps$-approximating $\MOT_C$ with probability $1-\delta$. Therefore, by setting $\delta$ sufficiently high, we conclude by Proposition~\ref{prop:amin} and a union bound
	that there exists a $\poly(n,k,d,R/\eps)$-time randomized algorithm for $\eps (nk)^{\alpha}$-approximating $\chub_{2,2}$ with probability of success at least $0.51$. This can be boosted to $2/3$ probability of success by another standard boosting argument, proving that $\chub_{2,2}$ lies in $\BPP$.  However, assuming $\NP \not\subset \BPP$, this contradicts the $\NP$-hardness in Lemma~\ref{lem:minoraclenphard}.
\end{proof}

\subsection{Proof of Lemma~\ref{lem:minoraclenphard}}\label{ssec:bary:proof}

As discussed in the Techniques section~\ref{ssec:intro:tech}, the proof relies on appropriately embedding the adjacency properties of a graph $G$ into a point configuration in order
to encode the $\NP$-hard $\clique$ decision problem as an instance of $\chub_{2,2}$ in sufficiently high dimension $d$. This embedding is as follows; see Remark~\ref{rem:embed} below for an interpretation of it as the edge-indicator vector embedding of a certain augmented graph. For simplicity of exposition, we make no attempt to optimize $d$ here (dimensionality reduction can be done, e.g., by simply applying the Johnson-Lindenstrauss lemma).

\begin{lemma}[Embedding for $\clique$]\label{lem:graphembedding}
	Given an $n$-vertex $D$-regular graph $G = (V,E)$ and an integer $k > 0$, there exists a function $\phi : [k] \times [n] \to \{0,1\}^{\binom{k}{2}n^2}$ satisfying the following.
	\begin{itemize}
		\item[(i)] $\phi$ can be evaluated in $\poly(n,k)$ time.
		\item[(ii)] For all $i \neq i' \in [k]$ and $v,v' \in V$, it holds that $\langle \phi(i,v), \phi(i',v') \rangle = \mathds{1}[(v,v') \in E]$.
		\item[(iii)] For all $i \in [k]$ and $v \in V$, it holds that $			\|\phi(i,v)\|_2^2 = D(k-1)$.
	\end{itemize}
\end{lemma}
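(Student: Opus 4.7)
My plan is to index the $\binom{k}{2}n^2$ coordinates of the codomain by tuples $(a,b,u,u')$ with $1 \le a<b \le k$ and $u,u'\in V$, and to set
\[
\phi(i,v)_{(a,b,u,u')} \;=\; \mathds{1}[a=i,\; u=v,\;(u,u')\in E] \;+\; \mathds{1}[b=i,\; u'=v,\;(u,u')\in E].
\]
Since $a<b$, at most one of the two indicators can fire for any fixed $(i,v)$, so $\phi$ is $\{0,1\}$-valued. Intuitively this is a refinement of the ``edge-indicator vector'' embedding sketched in \S\ref{ssec:intro:tech}: inside each of the $k-1$ pair-blocks $(a,b)$ that contain copy $i$ we place an edge-incidence pattern for $v$, but into the ``first slot'' ($u=v$) when $i$ is the smaller index and into the ``second slot'' ($u'=v$) when $i$ is the larger. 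This role asymmetry will force the supports of $\phi(i,\cdot)$ and $\phi(i',\cdot)$ to meet only inside the single block $(\min(i,i'),\max(i,i'))$, which is what lets properties (ii) and (iii) hold simultaneously.

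Property (i) is immediate from the explicit formula. For (ii), take WLOG $i<i'$ and enumerate the four ways that both vectors can be nonzero at the same coordinate $(a,b,u,u')$: the combinations in which both vectors use the same slot force $a=i=i'$ or $b=i=i'$ and are therefore vacuous; the ``B$+$A$'$'' combination (where $\phi(i,v)$ uses the second slot and $\phi(i',v')$ the first) forces $i'=a<b=i$, contradicting $i<i'$; only the ``A$+$B$'$'' combination survives, and it pins $(a,b,u,u')=(i,i',v,v')$ while requiring $(v,v')\in E$, thereby contributing exactly $\mathds{1}[(v,v')\in E]$. For (iii), I will count the nonzero coordinates of $\phi(i,v)$ (which equals $\|\phi(i,v)\|_2^2$ by $\{0,1\}$-valuedness): the first-slot case contributes $(k-i)\cdot D$ (choices of $b\in\{i+1,\dots,k\}$ times the $D$ neighbors of $v$), the second-slot case contributes $(i-1)\cdot D$ symmetrically, and the two cases are disjoint because $a<b$ precludes $a=b=i$. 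The counts add to $D(k-1)$, independent of $i$.

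The main subtlety---and the reason for the asymmetric first/second-slot construction---is making properties (ii) and (iii) compatible. A direct edge-indicator embedding in a single block would give the right off-diagonal inner products but an undesirable self-inner-product of $D$ on the diagonal; naively replicating such a block for each copy and summing would break the $i$-independence of the norm demanded by (iii). The specific asymmetry above exactly rebalances the contributions so that the $(k-i)D$ and $(i-1)D$ terms always sum to the same value $D(k-1)$, while the ``at most one slot fires'' structure keeps the $\binom{k}{2}$ different pair-blocks from interfering. Once this balancing is engineered, $D$-regularity of $G$ is what makes both counts clean multiples of $D$, and the rest of the proof reduces to the case analysis above.
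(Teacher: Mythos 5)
Your construction is identical to the paper's embedding (the paper writes the same map as $\mathds{1}[(u,u')\in E]\cdot\mathds{1}[(\ell,u)=(i,v)\text{ or }(\ell',u')=(i,v)]$, which coincides with your sum of two disjoint indicators), and your verification of (i)--(iii) is the same argument with the case analysis spelled out in slightly more detail. The proof is correct and matches the paper's approach.
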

\begin{proof}
	Define the embedding $\phi$ as follows. Index the $\binom{k}{2}n^2$ coordinates by tuples $(\ell,u,\ell',u')$ for indices $\ell < \ell' \in [k]$ and $u,u' \in [n] \cong V$. Set 
	\begin{align}
		\left[\phi(i,v)\right]_{(\ell,u,\ell',u')}
		=
		\mathds{1}[(u,u') \in E] \cdot \mathds{1}[(\ell,u) = (i,v)\mbox{ or } (\ell',u') = (i,v)].
		\label{eq:embedding-main}
	\end{align}
	Property (i) clearly holds since each entry of $\phi$ is computable in $\poly(n,k)$ time. 
	
	\par To show property (ii), note that the vectors $\phi(i,v)$ and $\phi(i',v')$ have disjoint support if $(v,v') \notin E$, and otherwise share exactly one non-zero entry at the coordinate $(\ell,u,\ell',u') = (i,v,i',v')$. Thus $\langle \phi(i,v), \phi(i',v') \rangle = \mathds{1}[(v,v') \in E]$.
	
	\par To show property (iii), note that the squared norm $\|\phi(i,v)\|_2^2$ is equal to the number of non-zeros in the embedded vector $\phi(i,v)$ since it is an indicator vector. Since $G$ is $D$-regular, counting the number of non-zero entries in $\phi(i,v)$ shows that $\|\phi(i,v)\|_2^2 = D(k-1)$.
\end{proof}

\begin{remark}[Interpretation of embedding via tensor-product graph]\label{rem:embed}
	The embedding $\phi : [k] \times [n] \to \R^d$ is the edge-indicator embedding of the graph $\tilde{G}$ that is the tensor product of the complete graph on $k$ vertices and $G$. That is, $\tilde{G}$ is $k$-partite and has vertex set $\tilde{V}$ equal to $k$ independent copies of $V$. A vertex in $\tilde{V}$ can be indexed by a tuple $(i,v)$, where $i \in [k]$ denotes the copy index and $v \in V$ denotes the corresponding vertex in the original graph. Two vertices $(i,v)$ and $(i',v')$ are adjacent in $\tilde{G}$ if and only if $i \neq i'$ and $(v,v') \in E$; that is, if and only if these two vertices in $\tilde{V}$ are from different copies of $V$ and are such that their underlying vertex indices are adjacent in the original graph $G$.
\end{remark}

\begin{proof}[Proof of Lemma~\ref{lem:minoraclenphard}]
	We reduce $\clique$ to approximately solving $\chub_{2,2}$. Given an $n$-vertex $D$-regular graph $G = (V, E)$ and an integer $k > 0$, let $\phi$ be the corresponding embedding in Lemma~\ref{lem:graphembedding}. Set
	$
	x_{i,j} = \phi(i,j) \in \{0,1\}^{d}
	$
	for each $i \in [k]$ and $j \in [n] \cong V$, where $d = \binom{k}{2}n^2$.
	\par Recall that the $\chub_{2,2}$ for the input points $\{x_{i,j}\}_{i \in [k], j \in [n]} \subset \R^d$ is the problem of computing the value
	\begin{align}
		F^* := \min_{(j_1,\dots,j_k) \in [n]^k} \min_{y \in \R^d} \sum_{i=1}^k \|x_{i,j_i} - y\|_2^2,
		\label{eq:chub-1}
	\end{align}
	see Definition~\ref{def:chub}. This objective simplifies for the particular choice of input points. Indeed,
	\begin{align}
		\min_{y \in \R^d} \sum_{i=1}^k \|x_{i,j_i} - y\|_2^2
		&=
		\left( 1 - \frac{1}{k} \right) \sum_{i=1}^k \|x_{i,j_i}\|_2^2 
		- \frac{2}{k} \sum_{i < i' \in [k]} \langle x_{i,j_i}, x_{i',j_i'} \rangle \nonumber
		\\
		&=
		D(k-1)^2
		- \frac{2}{k} \sum_{i < i' \in [k]}  \mathds{1}[(j_i,j_{i'}) \in E]
		\nonumber
		\\ &= M - \frac{2}{k} |E(j_1,\dots,j_k)|.
		\label{eq:chub-2}
	\end{align}
	Above, the first step is by plugging in the closed-form solution for $y = \frac{1}{k}\sum_{i=1}^k x_{i,j_i}$. The second step is by using the key properties (ii) and (iii) of the embedding $\phi$ in Lemma~\ref{lem:graphembedding}. The third step is by defining $M := D(k-1)^2$ and recalling that we write $|E(j_1,\dots,j_k)| = \sum_{i < i' \in [k]} \mathds{1}[(j_i,j_{i'}) \in E]$ to denote the number of edges in the induced subgraph of $G$ with vertices $j_1,\dots,j_k$ where we count the edges with multiplicity if $j_1, \dots, j_k$ are not distinct (see \S\ref{ssec:prelim:notation}).
	
	\par Therefore by combining~\eqref{eq:chub-1} and~\eqref{eq:chub-2}, we conclude that the value $F^*$ of the $\chub_{2,2}$ problem for the particular chosen input points $\{x_{i,j}\}_{i \in [k], j \in [n]} \subset \R^d$ is equal to
	\begin{align}
		F^*
		=
		M - \frac{2}{k} \cdot \max_{(j_1,\dots,j_k) \in [n]^k} |E(j_1,\dots,j_k)|
		=
		M - \frac{2}{k} \cdot \max_{S \in V^k} |E(S)|.
		\label{eq:chub-3}
	\end{align}
	Note that in this final equation, the optimization is over a multiset $S$ of $k$ vertices in $V$ that are not necessarily distinct. This is a multiset rather than a set because $\chub_{2,2}$ optimizes over tuples $(j_1,\dots,j_k) \in [n]^k$, and such a tuple does not necessarily consist of distinct indices; this is also why we defined $|E(S)|$ to count edges with multiplicity. 
	\par Using~\eqref{eq:chub-3}, we now argue that the value $F^*$ of $\chub_{2,2}$ varies significantly depending on whether $G$ contains a clique of size $k$. Specifically, on one hand 
	\begin{align}
		F^* = M-k+1, \qquad \text{ if $G$ contains a clique of size $k$ },
		\label{eq:chub-4}
	\end{align}
	because using this clique as the set $S$ and plugging into~\eqref{eq:chub-3} yields objective value $M - \frac{2}{k} \cdot \binom{k}{2} = M - k + 1$. On the other hand, 
	\begin{align}
		F^* \geq M-k+1 - \frac{2}{k}, \qquad \text{ if $G$ does not contain a clique of size $k$ },
		\label{eq:chub-5}
	\end{align}
	because then $\max_{S \in V^k} |E(S)| \leq \binom{k}{2} - 1$, whereby from~\eqref{eq:chub-3} we conclude that $F^* \geq M - \frac{2}{k}(\binom{k}{2} - 1) = M - k+1 - \frac{2}{k}$. It therefore follows from~\eqref{eq:chub-4} and~\eqref{eq:chub-5} that computing $\chub_{2,2}$ to any additive error less than $\frac{1}{k}$ enables one to decide whether $G$ contains a clique of size $k$, which is an $\NP$-hard problem~\citep{karp1972reducibility}.
\end{proof}

\section{Hardness of generalized Wasserstein barycenters}\label{sec:ext}

In this section, we prove Theorem~\ref{thm:inapproxext}. This extends our hardness-of-computation results to averaging probability distributions with respect to other Optimal Transport metrics by varying the $q$ parameter, and also different notions of Fr\'echet $p$-means by varying the $p$ parameter.

\begin{lemma}[Generalization of~Lemma~\ref{lem:minoraclenphard}]\label{lem:minexthard}
	Fix $p \in [1, \infty)$ and $q \in [1, \infty]$. There is a constant $\alpha = \alpha(p,q) < \infty$ such that, given vectors $\{x_{i,j}\}_{i \in [k], j \in [n]} \subseteq \{-1,0,1\}^{d}$, it is $\NP$-hard to compute the value of $\chubpq$ to additive error $\eps = (nk)^{-\alpha}$.	
\end{lemma}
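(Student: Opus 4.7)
The plan is to follow the same reduction scheme as in the $p=q=2$ proof: reduce from $k$-\clique\ on a $D$-regular graph $G=(V,E)$ (using Proposition~\ref{prop:clique-special}), construct points $\{x_{i,j}\}$ from an embedding of $G$, and show that the value of $\chubpq$ on these points differs by a polynomial additive gap $\gamma = (nk)^{-O(1)}$ between the case where $G$ contains a $k$-clique and the case where it does not. Once such a gap is established, Propositions~\ref{prop:mot} and~\ref{prop:amin} turn additive-$\gamma/2$ approximation of the generalized Wasserstein barycenter problem into a decision procedure for $\clique$, exactly as in the proofs of Theorems~\ref{thm:np} and~\ref{thm:inapprox}.

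For $q \in (1,\infty)$ I would reuse the edge-indicator embedding $\phi$ of Lemma~\ref{lem:graphembedding}, which already has $\{0,1\}$-valued coordinates. The boundary cases $q=1$ and $q=\infty$ require a different embedding because, as noted in \S\ref{ssec:intro:tech}, with the binary $\phi$ the value $F_{p,q}(x_{1,j_1},\dots,x_{k,j_k})$ becomes constant: the activation of a single coordinate where only two of the $k$ vectors are nonzero is invisible to the $\ell_\infty$ and $\ell_1$ geometries. I would replace $\phi$ by a $\{-1,0,1\}$-valued embedding that, at each potential edge coordinate $(\ell,u,\ell',u')$, stores $+1$ for the two incident vertices and a carefully chosen nonzero value for the remaining $k-2$ non-incident vectors, engineered so that inserting an edge actually moves the optimal hub $y^\star$ at that coordinate and strictly lowers the $\ell_q$-based cost.

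The technical core is the following monotonicity lemma that replaces the closed-form identity~\eqref{eq:chub-2}: \emph{for every distinct tuple $(j_1,\dots,j_k)$, the value $F_{p,q}(x_{1,j_1},\dots,x_{k,j_k})$ is nonincreasing in the edge set of the induced subgraph on $\{j_1,\dots,j_k\}$, and adding a single edge strictly decreases it by at least $\gamma=(nk)^{-O(1)}$.} Combined with a symmetry observation (graph isomorphisms of the induced subgraph preserve $F_{p,q}$), this implies that every $k$-clique yields a common value $V_{\mathrm{clique}}$, while every non-$k$-clique induced subgraph yields value at least $V_{\mathrm{clique}}+\gamma$. A separate short argument would handle non-distinct tuples: either by augmenting the embedding with a polynomial number of ``tag'' coordinates that penalize repeated indices (an option available implicitly in the $p=q=2$ proof through the multiset bound $|E(S)|\le\binom{k}{2}-1$ at the end of \S\ref{ssec:bary:proof}), or by an exchange argument showing that collapsing a repeated entry to a fresh distinct vertex can only decrease $F_{p,q}$.

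The main obstacle is proving the monotonicity lemma, since for general $(p,q)$ the inner minimization in~\eqref{eq:F-def} has no closed form and $F_{p,q}$ is no longer determined by $|E(j_1,\dots,j_k)|$. I would attack it by a one-coordinate local perturbation: let $y^\star$ be the minimizer for $G$, observe that among the $k$ chosen vectors only $x_{a,j_a}$ and $x_{b,j_b}$ become nonzero at the coordinate $(a,j_a,b,j_b)$ activated by adding the edge $(j_a,j_b)$, and consider the competitor $y$ obtained from $y^\star$ by shifting this single coordinate by a small $\eta>0$ toward the two attracted vectors. For $q\in(1,\infty)$, strong convexity of $t\mapsto|t|^q$ produces a first-order improvement of order $\eta$ in the two attracted terms while the remaining $k-2$ terms change by only $O(\eta^{\min(p,q)})$; optimizing $\eta$ as a function of $p,q,D,n,k$ yields the polynomial lower bound $\gamma$. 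For $q\in\{1,\infty\}$ the smoothness step must be replaced by the explicit piecewise-linear structure of the norm, which is precisely what the tailored $\pm 1$ embedding is designed to exploit: the perturbation of one coordinate is guaranteed not to be absorbed by others. Because $p$ and $q$ are fixed, all resulting constants can be absorbed into $\alpha=\alpha(p,q)$, completing the reduction as in \S\ref{ssec:bary:proof}.
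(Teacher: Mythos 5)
Your overall architecture matches the paper's: reduce from $k$-$\clique$ on regular graphs (Proposition~\ref{prop:clique-special}), reuse the edge-indicator embedding $\phi$ for $q\in(1,\infty)$, switch to $\{-1,0,1\}$-valued embeddings for the degenerate cases $q\in\{1,\infty\}$, and drive the argument by an ``adding an edge changes the value by an inverse-polynomial amount'' lemma plus a symmetry statement that all cliques share a common value. However, your key monotonicity lemma has a genuine problem as formulated. You state it as: for a fixed tuple, $F_{p,q}$ strictly \emph{decreases} by $(nk)^{-O(1)}$ when an edge is added to the induced subgraph, and your proposed proof perturbs the old optimizer $y^\star$ at the single newly activated coordinate. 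But under the natural reading---add the edge to $G$ and recompute the embedding---the claim is false: adding an edge raises the degrees of its endpoints, hence the norms of the embedded vectors, and the closed form \eqref{eq:chub-2} for $p=q=2$ shows the value \emph{increases} by $\tfrac{2}{k}\bigl((k-1)^2-1\bigr)>0$. Even under the weaker reading where the two endpoint vectors merely gain one shared nonzero coordinate (without losing anything), the $p=q=2$ value increases by $2-\tfrac{4}{k}>0$ for $k>2$. The decrease you need only occurs if the sparsity is held fixed, i.e., one vector trades a \emph{private} nonzero entry for an overlap with the other---an operation that is not realizable as an edge addition in a graph. This is precisely why the paper abstracts to $(k,s,t)$-collections (Definition~\ref{def:kstcollection}): Lemma~\ref{lem:collectionmonotonicity} proves monotonicity in $t$ at fixed $s$, the intermediate configurations need not be graph embeddings, and Lemma~\ref{lem:cliquesequal} gives the common clique value. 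Your $\eta$-shift argument therefore compares the wrong pair of configurations, and it also lacks the quantitative ingredient that produces the $(nk)^{-\alpha}$ gap: in the paper this comes from the lower bound $y_j\geq(kd)^{-\alpha}$ on the optimizer's coordinates (Lemma~\ref{lem:yilowerbound}) together with Lemma~\ref{lem:pq-helper}, not from a first-order gain of a hand-chosen perturbation (near an optimum such gains are generically second order, and $|t|^q$ is not strongly convex at $0$ for $q>2$).

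Two smaller points. For $q=1$ and $q=\infty$ the paper's gap proofs are not perturbative at all: for $q=1$ it uses a sign-doubled, parity-weighted embedding $\psi$ (requiring $k$ even) and a global separability-plus-Jensen computation (Lemma~\ref{lem:qequalsonegap}) giving an explicit lower bound decreasing in $t$; for $q=\infty$ the embedding $\xi$ assigns $0$ (not a nonzero value) to non-incident vectors and the gap follows from Jensen's inequality applied to pairwise $\ell_\infty$ distances. Your sketch for these cases leaves the embedding and the gap mechanism essentially unconstructed. Finally, non-distinct tuples need no tag coordinates or exchange argument: since repeated vertices contribute no edges, any tuple with a repeat yields $t<\binom{k}{2}$ (Lemma~\ref{lem:whykst}) and falls automatically into the non-clique branch of Lemma~\ref{lem:gap}.
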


The proof of Theorem~\ref{thm:inapproxext} from Lemma~\ref{lem:minexthard} is identical to the proof of Theorem~\ref{thm:inapprox} from Lemma~\ref{lem:minoraclenphard} and thus is omitted for brevity. 

\par It therefore remains to show Lemma~\ref{lem:minexthard}. As detailed in \S\ref{ssec:intro:tech}, our high-level approach is similar to the proof for the case of $p=q=2$ in \S\ref{sec:hard} in that we reduce $k$-clique to $\chubpq$. However, for general $p$ and $q$, the proof is significantly more involved in large part because the objective function $F_{p,q}$ (see~\eqref{eq:F-def}) does not have a closed-form solution. Our argument is based on the following key lemma.

\begin{lemma}[Gap between cliques and non-cliques]\label{lem:gap}
	Given an $n$-vertex, $D$-regular graph $G$, an even integer $k > 0$, and parameters $p \in [1,\infty),q \in [1,\infty]$, there is an algorithm that takes $\poly(n,k)$ time to compute vectors $\{x_{i,j}\}_{i \in [k], j \in [n]} \in \{-1,0,1\}^d$ satisfying the following. 
	\begin{itemize}
		\item If $v_1, \dots, v_k \in V$ form a $k$-clique in $G$, then
		$$\val_{p,q}(x_{1,v_1},\ldots,x_{k,v_k}) \leq \gamma.$$
		\item If $v_1, \dots, v_k \in V$ does not form a $k$-clique in $G$, then
		$$\val_{p,q}(x_{1,v_1},\ldots,x_{k,v_k}) \geq \gamma + \Delta.$$
	\end{itemize}
	Here, $\gamma := \gamma(n,k,D,p,q)$ and $\Delta := \Delta(n,k,D,p,q)$ are quantities that can be computed in $\poly(n,k)$ time for any fixed $p \in [1,\infty), q \in [1,\infty]$. Furthermore, $\Delta \geq (nk)^{-\alpha}$ for some $\alpha := \alpha(p,q)$ that depends only on $p,q$.
\end{lemma}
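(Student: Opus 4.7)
The plan is to reduce $k$-$\clique$ to $\chubpq$ using the edge-indicator embedding $\phi$ from Lemma~\ref{lem:graphembedding} in the case $q \in (1,\infty)$, and separately constructed $\{-1,0,1\}$-valued embeddings (to be detailed in \S\ref{app:q1} and \S\ref{app:qinf}) for the boundary cases $q \in \{1,\infty\}$. In the $q \in (1,\infty)$ case I would set $x_{i,v} := \phi(i,v)$; these vectors, together with the claimed $\gamma$ and $\Delta$, can be output in $\poly(n,k)$ time for any fixed $p,q$.

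\textbf{Defining $\gamma$.} For any $k$-clique $v_1,\dots,v_k$ in $G$, properties (ii)--(iii) of Lemma~\ref{lem:graphembedding} imply that the configuration $z_i = \phi(i,v_i)$ is highly symmetric: every $\|z_i\|_q^q = D(k-1)$, and because the vectors are $\{0,1\}$-valued and every pair $(v_i,v_j)$ is an edge, $\|z_i - z_j\|_q^q = \|z_i\|_1 + \|z_j\|_1 - 2\langle z_i, z_j \rangle = 2D(k-1) - 2$ for every $i \neq j$. Two distinct $k$-cliques in $G$ yield configurations that coincide up to a permutation of the coordinates of $\R^d$, which is an $\ell_q$-isometry and hence preserves $F_{p,q}$. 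I would therefore define $\gamma := \gamma(n,k,D,p,q)$ as this common value; it is computable in $\poly(n,k)$ time for any fixed $p,q$ by solving the inner convex program $\min_y \sum_i \|z_i - y\|_q^p$ to sufficient precision.

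\textbf{Gap for non-cliques.} For a non-clique tuple $T$ whose induced subgraph (with multiplicities when the entries of $T$ are not distinct, using the convention of \S\ref{ssec:prelim:clique}) is missing $r \ge 1$ edges, I would establish $F_{p,q}(T) \ge \gamma + \Delta$ by induction on $r$. The inductive step is the key sublemma described in \S\ref{ssec:intro:tech}: adding a single edge to the induced subgraph of a fixed tuple decreases $F_{p,q}$ by at least $\Delta$. I would formalize this as a geometric comparison between two $k$-point configurations having identical $q$-norms $\|z_i\|_q = (D(k-1))^{1/q}$ and identical pairwise $q$-distances except for a single pair $(a,b)$ whose distance decreases from $(2D(k-1))^{1/q}$ to $(2D(k-1)-2)^{1/q}$. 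Both configurations are realized by edge-indicator embeddings of appropriate $k$-vertex induced subgraphs, which also handles uniformly the case when the entries of $T$ are not distinct (since the convention $\langle \phi(i,v), \phi(j,v)\rangle = 0$ coincides with the ``non-edge'' behavior).

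\textbf{Main obstacle.} I expect the geometric sublemma to be the main difficulty, because the inner minimization in $F_{p,q}$ has no closed form for general $p,q$. My approach would be to plug the optimal hub $y^\ast$ of the looser configuration into the minimization for the tighter one, take a shift $y^\ast + \delta$ adapted to the changed coordinates of $z_a',z_b'$, and apply the Taylor expansion of $\|\cdot\|_q^p$ about $y^\ast$ to quantify the first- and second-order effects of the shift. Strict convexity of $\|\cdot\|_q$ for $q \in (1,\infty)$ then yields a second-order gain bounded below by a concrete polynomial in $n,k,D$, which inverts to $\Delta \ge (nk)^{-\alpha}$ for a constant $\alpha = \alpha(p,q)$. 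For the boundary cases $q \in \{1,\infty\}$ the absence of strict convexity obstructs this direct second-order argument, which is precisely why the $\{-1,0,1\}$-valued embeddings of \S\ref{app:q1} and \S\ref{app:qinf} are needed: the added sign structure produces a first-order gain analyzable by an analogous shift argument.
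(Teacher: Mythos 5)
Your high-level plan matches the paper's: the same edge-indicator embedding for $q\in(1,\infty)$, $\gamma$ defined as the common value of clique configurations (which coincide up to coordinate permutations, cf.\ Lemma~\ref{lem:cliquesequal}), an induction that adds one edge at a time, and separate embeddings for $q\in\{1,\infty\}$. The genuine gap is in the key inductive step, in two respects. First, the intermediate configurations in your induction are \emph{not} in general realizable as edge-indicator embeddings of induced subgraphs: literally adding an edge to the induced subgraph changes the degrees of its endpoints, hence the sparsities and $q$-norms of the embedded vectors (destroying the ``identical norms'' structure you rely on), and when the tuple has repeated vertices the required edge between two copies of the same vertex $v$ cannot be realized by any simple graph, since $\langle\phi(i,v),\phi(i',v)\rangle=0$ always. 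This is precisely why the paper abandons graph realizability and runs the induction over abstract $(k,s,t)$-collections (Definition~\ref{def:kstcollection}, Lemmas~\ref{lem:whykst} and~\ref{lem:collectionmonotonicity}), where the local move relocates one ``private'' non-zero of one vector onto a private coordinate of a disjoint partner, preserving all sparsities.

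Second, and more importantly, your quantitative mechanism for the per-edge gain is unsubstantiated and is not the one that works. You invoke a Taylor expansion around the old optimal hub $y^\ast$ and strict convexity of $\|\cdot\|_q$ to claim a second-order gain that is inverse-polynomial; but you are comparing minima of two \emph{different} objectives, so strict convexity gives no quantitative modulus by itself, and the size of any first- or second-order gain from shifting the hub depends entirely on where $y^\ast$ sits: if the relevant coordinates of $y^\ast$ were exponentially small the gain could be negligible, and for $q<2$ the second derivatives blow up near the boundary, so step sizes are not free. The missing ingredient is an inverse-polynomial lower bound $y^\ast_j\geq(kd)^{-\alpha}$ on every coordinate touched by some vector, which the paper extracts from the first-order optimality conditions (Lemma~\ref{lem:yilowerbound}) and converts into a gain by a direct comparison with an explicit candidate hub that merges the two private coordinates, $y'_2=\min(1,(y_1^q+y_2^q)^{1/q})$, together with the elementary power inequalities of Lemma~\ref{lem:pq-helper}; no second-order argument is needed or used. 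Two smaller fixes: $\gamma$ cannot be computed from ``a clique configuration in $G$'' since $G$ may contain no $k$-clique---you must build a synthetic $(k,D(k-1),\binom{k}{2})$-collection and solve its convex program, with the (one-sided) precision error absorbed into $\Delta$---and the $q\in\{1,\infty\}$ cases are not analogous shift arguments: the paper handles $q=1$ via Jensen's inequality plus an explicit coordinatewise minimization and counting, and $q=\infty$ via Jensen applied to pairwise $\ell_\infty$ distances.
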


Note that in this lemma, the claim holds even when the vertices $v_1, \dots, v_k$ are not distinct.
\par Given Lemma~\ref{lem:gap}, we now prove Lemma~\ref{lem:minexthard}.

\begin{proof}[Proof of Lemma~\ref{lem:minexthard}]
	Suppose for sake of contradiction that there is an algorithm that given points $\{x_{i,j}\}_{i \in [k], j \in [n]} \in \{-1,0,1\}^d$ and accuracy $\eps > 0$, computes $F^* := \min_{\vec{j} \in [n]^k} \val_{p,q}(x_{1,j_1},\ldots,x_{k,j_k})$ to $\eps$ additive error in $\poly(n,k,d,1/\eps)$ time. Then, given any $n$-vertex, $D$-regular graph $G$, and an even integer $k > 0$, we give a $\poly(n,k)$-time algorithm that determines whether $G$ contains a $k$-clique. This contradicts the $\NP$-hardness of $\clique$ given in Proposition~\ref{prop:clique-special}, and proves the lemma.
	
	First, compute the graph embedding vectors $\{x_{i,j}\}_{i \in [k], j \in [n]} \in \RR^d$, the threshold $\gamma = \gamma(n,k,D,p,q)$ and the gap $\Delta = \Delta(n,k,D,p,q)$ in $\poly(n,k)$ time using Lemma~\ref{lem:gap}. Second, compute $F^*$ up to additive error $\Delta /3$, which takes $\poly(n,k)$ time by our assumption, since $d, \Delta^{-1} \leq \poly(n,k)$. This approximate value lets us distinguish the case in which $G$ has a $k$-clique and $F^* \leq \gamma / k$ from the case in which $G$ does not have a $k$-clique and $\min_{\vec{j}} C_{\vec{j}} \geq \gamma / k + \Delta / k$, which solves the $\clique$ problem in $\poly(n,k)$ time, contradicting its $\NP$-hardness.
\end{proof}
\par Therefore it only remains to prove Lemma~\ref{lem:gap}. The following helper lemma is useful for this.
\begin{lemma}\label{lem:pq-convexity}
	Let $p \in [1,\infty)$, $q \in [1,\infty]$, and $x \in \R^d$. The function
	$y \mapsto \|x-y\|_q^p$
	is convex on $\R^d$.
\end{lemma}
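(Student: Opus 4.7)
The plan is to decompose the map $y \mapsto \|x-y\|_q^p$ as a composition of two simpler maps and then invoke the standard composition rule for convexity. Specifically, I would write it as $g \circ h$, where $h : \mathbb{R}^d \to [0,\infty)$ is defined by $h(y) = \|x - y\|_q$ and $g : [0,\infty) \to \mathbb{R}$ is defined by $g(t) = t^p$.

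First I would check that $h$ is convex and non-negative. Non-negativity is immediate from the definition of a norm. For convexity, the map $y \mapsto x - y$ is affine, and $\|\cdot\|_q$ is convex on $\mathbb{R}^d$ (for $q \in [1,\infty]$, since it is a norm and thus satisfies the triangle inequality and positive homogeneity, which together yield convexity). The composition of a convex function with an affine map is convex, so $h$ is convex.

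Next I would verify the properties of $g$. For $p \in [1,\infty)$, the function $g(t) = t^p$ is convex on $[0,\infty)$ (a routine fact, e.g.\ from the non-negativity of the second derivative $p(p-1)t^{p-2}$ for $p > 1$, and trivially for $p = 1$), and it is non-decreasing on $[0,\infty)$.

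Finally, I would apply the standard composition rule: if $h$ is convex and takes values in the domain of $g$, and $g$ is convex and non-decreasing on that domain, then $g \circ h$ is convex. Applying this to our $g$ and $h$ yields convexity of $y \mapsto \|x-y\|_q^p$ on $\mathbb{R}^d$. There is no real obstacle here; the only subtlety worth flagging is that the non-decreasing hypothesis on $g$ is essential (otherwise the composition rule fails), and it is exactly this hypothesis that uses $p \geq 1$ together with $h \geq 0$.
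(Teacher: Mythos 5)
Your proof is correct and follows essentially the same route as the paper: write the function as a norm composed with an affine (translation) map, which is convex, and then compose with the convex, non-decreasing power function $t \mapsto t^p$ on $[0,\infty)$, invoking the standard monotone-convex composition rule. The only difference is bookkeeping (the paper writes it as a three-fold composition $f \circ g \circ h$), and your remark that the non-decreasing hypothesis is where $p \geq 1$ and non-negativity of the norm enter is exactly the right point to flag.
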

\begin{proof}
	Express this function as the composition $f \circ g \circ h$ of the powering function $f(t) = t^p$ from $\R \to \R$, the $q$-norm function $g(z) = \|z\|_q$ from $\R^d \to \R$, and the translation function $h(y) = x-y$ from $\R^d \to \R$, and appeal to standard results on convexity-preserving transformations (see e.g.,~\citep{Boyd04}). Specifically, since $g$ is convex and $h$ is linear, their composition $g \circ h$ is convex. Thus, since also $f$ is convex and monotonic, it follows that $f \circ (g \circ h) = f \circ g \circ h$ is convex.
\end{proof}

In Sections \ref{app:q1inf}, \ref{app:q1}, and \ref{app:qinf}, we prove Lemma~\ref{lem:gap} for the cases $q \in (1,\infty)$, $q = 1$, and $q = \infty$ respectively. These three cases can be read separately, and together prove Theorem~\ref{thm:inapproxext}.

\subsection{Case $q \in (1, \infty)$}\label{app:q1inf}
We prove Lemma~\ref{lem:gap} in the case of $q \in (1,\infty)$ and general $p \in [1,\infty)$.  As in the $p=q=2$ case proved in \S\ref{sec:hard}, our proof strategy is to reduce from the $\clique$ problem, and we use the same graph embedding $\phi$ as in \eqref{eq:embedding-main}, which we restate below for convenience. Namely, given a graph $G = (V,E)$ on vertex set $V \cong [n]$, we define the embedding $\phi = \phi_G : [k] \times [n] \to \mathbb{R}^{d}$ by letting $d = \binom{k}{2} n^2$ and indexing the $d$ coordinates by tuples $(\ell, u,\ell',u')$ where $\ell < \ell' \in [k]$ and $u,u' \in [n] \cong V$, and setting:
\begin{equation}\label{eq:embedding-q1inf}[\phi(i,v)]_{(\ell,u,\ell',u')} = \mathds{1}[(u,u') \in E] \cdot \mathds{1}[(\ell,u) = (i,v) \mbox{ or } (\ell',u') = (i,v)].\end{equation}

In words, this embedding guarantees that for any $i < i' \in [k]$ and $v,v' \in [n] \cong V$, the vectors $\phi(i,v)$ and $\phi(i',v')$ have disjoint support if $(v,v') \not\in E$, and otherwise share exactly one non-zero entry at coordinate $(\ell,u,\ell',u') = (i,v,i',v')$. Although the graph embedding is the same as the embedding used to prove the special case $p = q = 2$ in Section~\ref{sec:hard}, a new and significantly more involved analysis is required. The main challenge is that, unlike the $p = q = 2$ case, we do not have a closed-form solution to $\val_{p,q}(x_1,\ldots,x_k)$ given $x_1,\ldots,x_k \in \mathbb{R}^d$. Thus, we cannot analytically compute the value of $\val_{p,q}(\phi(1,v_1),\ldots,\phi(k,v_k))$ and compare the case of cliques and non-cliques as we did in Section~\ref{sec:hard}. This obstacle is compounded by the fact that for general $p,q$, the value of $\val_{p,q}(\phi(1,v_1),\ldots,\phi(k,v_k))$ is not even determined uniquely by the number of edges $|E(v_1,\ldots,v_k)|$ between the vertices $v_1,\ldots,v_k$.

We overcome this obstacle as follows: in order to prove that $k$-cliques give a lower cost than non-$k$-cliques, with a polynomial gap for any fixed $p$ and $q$, we argue via induction on the number of edges in the subgraph induced by the $k$ vertices that adding an edge increases the value of $F$ by at least a polynomial gap on each iteration. In particular, this allows us to conclude that if we have a $k$-clique minus any number of edges, we have a gap of $\Delta$ in $F$ versus if we have a $k$-clique.

In order to conduct this analysis, it is helpful to make the following technical definition. 

\begin{defin}\label{def:kstcollection}
	A $(k,s,t)$-collection is a collection of vectors $x_1,\ldots,x_k \in \{0,1\}^d$, for some dimension $d$, such that:
	\begin{enumerate}
		\item[(i)] Each vector is non-zero on $s$ entries, i.e., $\|x_1\|_0 = \dots = \|x_k\|_0 = s$.
		\item[(ii)] Exactly $t$ pairs of vectors share one non-zero entry, i.e., $|\{(i,i') : \langle x_i, x_{i'} \rangle = 1, i < i'\}| = t$.
		\item[(iii)] The other $\binom{k}{2} - t$ pairs of vectors are disjoint, i.e., $|\{(i,i') : \langle x_i,x_{i'}  \rangle  = 0, i < i'\}| = \binom{k}{2} - t$.
		\item[(iv)] For each entry $j \in [d]$, at most two vectors are non-zero, i.e., $|\{i : [x_i]_j \neq 0\}| \leq 2$.
	\end{enumerate}
\end{defin}

The significance of this definition is that vectors constructed using the embedding $\phi$ are $(k,s,t)$-collections, as stated below. The proof is deferred to Appendix~\ref{app:lem-whykst}.
\begin{lemma}\label{lem:whykst}
	If $G = (V,E)$ is an $n$-vertex, $D$-regular graph, and $v_1,\ldots,v_k \in [n] \cong V$ then $\phi(1,v_1),\ldots,\phi(k,v_k)$ is a $(k,D(k-1), |E(v_1,\ldots,v_k)|)$-collection.
\end{lemma}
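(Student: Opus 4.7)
The plan is to verify the four conditions of Definition~\ref{def:kstcollection} directly from the embedding formula~\eqref{eq:embedding-q1inf}. The key observation is that each coordinate is indexed by a tuple $(\ell,u,\ell',u')$ with $\ell<\ell'$, so the two clauses in the disjunction ``$(\ell,u)=(i,v)$ or $(\ell',u')=(i,v)$'' are mutually exclusive. This makes counting the non-zero entries of $\phi(i,v)$ and comparing supports of pairs $\phi(i,v), \phi(i',v')$ a clean case analysis, which exactly produces the parameters $s = D(k-1)$ and $t = |E(v_1,\ldots,v_k)|$.

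To establish condition~(i), I would fix $(i,v)$ and partition the non-zero coordinates of $\phi(i,v)$ into two disjoint cases. In the first case $(\ell,u)=(i,v)$, the remaining index $\ell'$ ranges over $\{i+1,\ldots,k\}$ and $u'$ must satisfy $(v,u')\in E$, contributing $D(k-i)$ non-zero entries by $D$-regularity. In the second case $(\ell',u')=(i,v)$, the index $\ell$ ranges over $\{1,\ldots,i-1\}$ and $u\in N(v)$, contributing $D(i-1)$ entries. Summing gives $\|\phi(i,v)\|_0 = D(k-1)$, as required.

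For conditions~(ii) and~(iii), I would compute $\langle \phi(i,v), \phi(i',v')\rangle$ for $i<i'$ by enumerating the coordinates where both vectors are non-zero. A short case analysis on which clause of the disjunction is satisfied for each vector shows that, under the constraint $\ell<\ell'$ and $i<i'$, the only common non-zero coordinate can be $(\ell,u,\ell',u')=(i,v,i',v')$, and this is actually non-zero precisely when $(v,v')\in E$. Hence $\langle \phi(i,v),\phi(i',v')\rangle = \mathds{1}[(v,v')\in E]\in\{0,1\}$, and the number of pairs $(i,i')$ with $i<i'$ and inner product equal to $1$ is exactly $|E(v_1,\ldots,v_k)|$, matching the definition of $t$.

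Finally, for condition~(iv), I would fix a coordinate $(\ell,u,\ell',u')$ and observe that $[\phi(i,v)]_{(\ell,u,\ell',u')}\neq 0$ forces $(i,v) \in \{(\ell,u),(\ell',u')\}$, so at most two of the vectors $\phi(1,v_1),\ldots,\phi(k,v_k)$ can be non-zero at that coordinate. The whole argument is a direct unpacking of the definition of $\phi$, so there is no substantive obstacle beyond careful bookkeeping of the case analysis; the most delicate point is ensuring that the two disjunctive clauses defining the support of $\phi(i,v)$ are correctly treated as mutually exclusive thanks to the ordering $\ell<\ell'$.
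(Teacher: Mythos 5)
Your proposal is correct and follows essentially the same route as the paper's own proof: a direct verification of the four conditions of Definition~\ref{def:kstcollection}, counting $D(k-i)+D(i-1)=D(k-1)$ non-zeros per vector by $D$-regularity, identifying the unique shared coordinate $(i,v_i,i',v_{i'})$ for adjacent pairs, and noting that any coordinate can be non-zero for at most the two vectors indexed by $\{(\ell,u),(\ell',u')\}$. No gaps.
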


The main benefit of writing the subsequent arguments in terms of $(k,s,t)$-collections is that we eliminate the need to check whether any given $(k,s,t)$-collection that we construct during our arguments can be instantiated as some embedding $\phi(1,v_1),\ldots,\phi(k,v_k)$ of vertices in some graph $G$. Indeed, in the subsequent arguments we construct $(k,s,t)$-collections that may not correspond to the embedding of a graph, yet our induction arguments are still meaningful, since our lemmas hold for all $(k,s,t)$-collections:

\begin{lemma}\label{lem:yilowerbound}
	Let $p \in [1, \infty)$ and $q \in (1, \infty)$. There exists a constant $\alpha > 0$ that depends only on $p$ and $q$ and satisfies the following. 
	If $x_1,\ldots,x_k \in \{0,1\}^d$ is a $(k,s,t)$-collection and at least one vector $x_i$ has a non-zero $j$-th coordinate, then any solution $y \in \RR^d$ to the optimization problem
	\[
	F_{p,q}(x_1, \dots, x_k) = \min_{y \in \R^d} \sum_{i=1}^k \|x_i - y\|_q^p
	\]
	satisfies
	\[
	y_j \geq (kd)^{-\alpha}.
	\]
\end{lemma}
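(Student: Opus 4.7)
The plan is to exploit first-order optimality of the convex minimization defining $F_{p,q}(x_1,\dots,x_k)$, which is convex by Lemma~\ref{lem:pq-convexity}. Let $I_1 = \{i : x_{i,j} = 1\}$ and $I_0 = \{i : x_{i,j} = 0\}$, so $I_1 \neq \emptyset$ by hypothesis. First I would show that $y_j \in [0,1]$: if $y_j > 1$ (resp.\ $y_j < 0$), projecting this coordinate down to $1$ (resp.\ up to $0$) strictly decreases $|x_{i,j} - y_j|^q$ for every $i$, contradicting optimality. If $y_j \geq 1/2$ the conclusion is immediate, so suppose $y_j \in [0, 1/2)$. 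To exclude $y_j = 0$, I would compute the one-sided directional derivative of the objective in the $+e_j$ direction: for $i \in I_1$ this equals $-p \|x_i - y\|_q^{p-q} < 0$ (using that $y \neq x_i$, since $y_j = 0 \neq 1 = x_{i,j}$), whereas for $i \in I_0$ it vanishes since $q > 1$ forces $\|x_i - y - t e_j\|_q^p - \|x_i - y\|_q^p = O(t^q)$. Combining with $I_1 \neq \emptyset$ yields a strictly negative one-sided derivative, contradicting optimality. Hence I may assume $y_j \in (0, 1/2]$.

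In this regime $y \neq x_i$ for every $i$ (because $x_{i,j} \in \{0,1\}$), so $F_{p,q}$ is differentiable at $y$ in the $y_j$-direction, and the stationarity condition rearranges to
\begin{align*}
(1 - y_j)^{q-1} \sum_{i \in I_1} \|x_i - y\|_q^{p-q} = (y_j)^{q-1} \sum_{i \in I_0} \|x_i - y\|_q^{p-q}.
\end{align*}
Using $(1 - y_j)^{q-1} \geq (1/2)^{q-1}$ and $|I_0| \leq k$, this yields a lower bound on $(y_j)^{q-1}$ in terms of the $\|x_i - y\|_q^{p-q}$ factors. To control these polynomially in $kd$, I would use two bounds. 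Upper: by suboptimality of the test point $0$, $\sum_i \|x_i - y\|_q^p \leq \sum_i \|x_i\|_q^p = k s^{p/q}$ (using property (i) of Definition~\ref{def:kstcollection}), so $\|x_i - y\|_q \leq k^{1/p} s^{1/q} \leq k^{1/p} d^{1/q}$. Lower: the coordinate bound $\|x_i - y\|_q \geq |x_{i,j} - y_j|$ gives $\|x_i - y\|_q \geq 1/2$ for $i \in I_1$ and $\|x_i - y\|_q \geq y_j$ for $i \in I_0$.

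The main obstacle is that the sign of $p - q$ determines the direction in which these bounds must be applied. When $p \geq q$ both apply favorably, and after plugging in I obtain $y_j \geq (kd)^{-\alpha}$ directly for a suitable $\alpha = \alpha(p,q)$. When $p < q$ the inequalities flip: the numerator still admits the lower bound $\|x_i - y\|_q^{p-q} \geq (k^{1/p} d^{1/q})^{p-q}$, but the denominator requires a lower bound on $\|x_i - y\|_q$ for $i \in I_0$, for which only the circular estimate $y_j$ is available. Substituting $\|x_i - y\|_q^{p-q} \leq (y_j)^{p-q}$ into the denominator produces a self-referential inequality of the shape $(y_j)^{2q-p-1} \geq (kd)^{-\beta}$. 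The key observation rescuing the argument is that $2q - p - 1 > 0$ whenever $p < q$ and $q > 1$, so the exponent on the left is strictly positive and the inequality can be inverted to give the desired bound $y_j \geq (kd)^{-\alpha(p,q)}$.
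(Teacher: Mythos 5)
Your proposal follows essentially the same route as the paper's proof: establish $y_j \in [0,1]$ (and reduce to $y_j\le 1/2$), derive the first-order stationarity condition
\[
(1-y_j)^{q-1}\sum_{i\in I_1}\|x_i-y\|_q^{p-q} \;=\; y_j^{q-1}\sum_{i\in I_0}\|x_i-y\|_q^{p-q},
\]
and then bound each side using upper/lower bounds on $\|x_i-y\|_q$. You are actually more explicit than the paper's terse write-up about how the sign of $p-q$ flips the bounds, which is a genuine strength. The issue is an algebra slip in the $p<q$ branch: from
\[
y_j^{q-1}\;\geq\;\frac{(1/2)^{q-1}\,(k^{1/p}d^{1/q})^{p-q}}{k\,y_j^{p-q}},
\]
clearing the denominator gives $y_j^{\,q-1+p-q}=y_j^{\,p-1}$ on the left, not $y_j^{\,2q-p-1}$; so the "rescuing observation" that $2q-p-1>0$ whenever $p<q$ and $q>1$ does not apply. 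The correct exponent is $p-1$, which vanishes exactly at $p=1$, and there the self-referential inequality says nothing.

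This is not cosmetic. At $p=1$ your exclusion of $y_j=0$ also has a hole: the claim that the $I_0$ one-sided derivative in the $+e_j$ direction vanishes uses $y\neq x_i$; if $p=1$ and $y=x_i$ for some $i\in I_0$ then that one-sided derivative equals $+1$, and the total can be nonnegative. In fact the stated claim appears to fail at $p=1$: for the $(2,2,0)$-collection $x_1=(1,1,0,0)$, $x_2=(0,0,1,1)$ and any $q\in(1,\infty)$, the objective $\|x_1-y\|_q+\|x_2-y\|_q$ is minimized on the entire segment $[x_1,x_2]$, in particular at $y=x_2$ where $y_1=0$. So, with the corrected exponent, your argument does close the case $1<p<q$; but $p=1$ requires a genuinely different idea or an additional hypothesis (e.g., on $k$ or on uniqueness of the minimizer), and you should flag it as such rather than asserting that the algebra saves it.
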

\begin{proof}
	After possibly permuting the $k$ vectors, we may assume without loss of generality that $[x_i]_j = \mathds{1}[i \leq m]$ for all $i \in [k]$ and some $m \in \{1,2\}$. Note also that the solution $y \in [0,1]^d$, or else projecting $y$ to $[0,1]^d$ would improve the objective. Thus the solution $y$ satisfies
	$$\val_{p,q}(x_1,\ldots,x_k) = \sum_{i=1}^m ((1 - y_j)^q + c_i)^{\frac{p}{q}} + \sum_{i=m+1}^k (y_j^q + c_i)^{\frac{p}{q}},$$
	where $c_i = \sum_{\ell \neq j}^d |[x_i]_{\ell} - y_{\ell}|^q \in [0,d-1]$ for $i \in [k]$. By the first-order optimality condition on $y_j$,
	\[
	(1-y_j)^{q-1} \sum_{i=1}^m \left( (1-y_j)^q + c_i \right)^{p/q-1}
	=
	y_j^{q-1} \sum_{i=m+1}^k \left( y_j^q + c_i \right)^{p/q -1}.
	\]
	Re-arranging yields $y_j = a/(a+b)$, where
	$$a = \left(\sum_{i=1}^m ((1-y_j)^q + c_i)^{(p/q - 1)}\right)^{1/(q-1)} \quad \text{ and } \quad b = \left(\sum_{i=m+1}^k (y_j^q + c_i)^{(p/q - 1)}\right)^{1/(q-1)}.$$
	Assume without loss of generality that $k \geq 2$, as the $k = 1$ case is trivial (if $k = 1$ then $y_j = [x_1]_j = 1$). Thus, we may also assume without loss of generality that $y_j \leq 1/2$; the other case $y_j \geq 1/2$ is handled by adjusting the constant $\alpha$. Then $a \geq 2^{(q-p)/(q-1)}$, and $b \leq (k d^{(p/q-1)})^{1/(q-1)}$ since $y_j^q + c_i \leq d$. The claim follows by suitably choosing the constant $\alpha$.
\end{proof}

\begin{lemma}\label{lem:pq-helper}
	Let $0 \leq t' \leq t \leq T$ for $T \geq 1$.
	\begin{itemize}
		\item If $\gamma \geq 1$, then $t^{\gamma} - {t'}^{\gamma} \geq (t-t')^{\gamma}$. 
		\item If $\gamma \in (0,1)$, then $t^{\gamma} - {t'}^{\gamma} \geq \frac{\gamma}{T}(t-t')$.
	\end{itemize}
\end{lemma}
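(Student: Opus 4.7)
The plan is to prove the two inequalities independently, since each is a standard one-variable calculus fact with a clean reduction. In both cases I can safely assume $t > 0$ (otherwise $t' = t = 0$ and both sides vanish), and for the strict inequality direction I can assume $t' < t$ (the equality case $t' = t$ gives $0 \geq 0$).

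For the case $\gamma \geq 1$, my preferred route is a concavity/subadditivity argument that avoids any derivative test. Since $\gamma \geq 1$, the map $\psi(x) = x^{1/\gamma}$ is concave on $[0,\infty)$ with $\psi(0) = 0$, and any such function is subadditive: $\psi(a+b) \leq \psi(a) + \psi(b)$ for $a,b \geq 0$. Applying this with the choice $a = t^{\gamma} - t'^{\gamma}$ and $b = t'^{\gamma}$ (both nonnegative since $t \geq t'$) gives
\[
t \;=\; (t^{\gamma})^{1/\gamma} \;=\; \psi(a+b) \;\leq\; \psi(a) + \psi(b) \;=\; (t^{\gamma} - t'^{\gamma})^{1/\gamma} + t',
\]
so $(t-t')^{\gamma} \leq t^{\gamma} - t'^{\gamma}$, which is the claim. (As a sanity check / alternative, one can set $u = t'/t \in [0,1]$, reduce to $1 - u^{\gamma} \geq (1-u)^{\gamma}$, and verify this by noting that $g(u) := 1 - u^{\gamma} - (1-u)^{\gamma}$ vanishes at the endpoints with a single interior critical point at $u = 1/2$ where $g$ is nonnegative.)

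For the case $\gamma \in (0,1)$, I would apply the mean value theorem to $f(s) = s^{\gamma}$, which is $C^1$ on $(0,\infty)$ and continuous at $0$. This yields some $\xi \in (t',t) \subseteq (0,T]$ with
\[
t^{\gamma} - t'^{\gamma} \;=\; \gamma\, \xi^{\gamma - 1}\,(t - t').
\]
Since $\gamma - 1 < 0$, the function $s \mapsto s^{\gamma-1}$ is decreasing, so $\xi^{\gamma-1} \geq T^{\gamma-1}$. Finally, since $T \geq 1$ and $\gamma \in (0,1)$, we have $T^{\gamma} \geq 1$ and hence $T^{\gamma-1} = T^{\gamma}/T \geq 1/T$. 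Combining these two estimates gives $t^{\gamma} - t'^{\gamma} \geq (\gamma/T)(t-t')$, as desired. The edge case $t' = 0$ is handled by the same MVT application (or by a direct computation: $t^{\gamma} = t \cdot t^{\gamma-1} \geq t \cdot T^{\gamma-1} \geq t/T \geq (\gamma/T)\,t$).

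I do not foresee a genuine obstacle: the lemma is elementary, and the only thing to be careful about is the boundary case $t' = 0$ (or $t = t'$) in the mean value step, which is easily dispatched as above. Both inequalities are in fact tight up to constants (e.g.\ $t' = 0$ for part 1, and $t \approx T, t' \approx T$ for part 2 make the bounds sharp in the relevant regime), which confirms we are not losing anything in the approach.
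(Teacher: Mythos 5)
Your proof is correct and takes essentially the same route as the paper: for the first bullet the paper invokes convexity (superadditivity) of $t \mapsto t^{\gamma}$, which is just the dual formulation of your subadditivity of the concave map $x \mapsto x^{1/\gamma}$, and for the second bullet the paper uses the concavity (tangent-line) bound $t^{\gamma} - {t'}^{\gamma} \geq \gamma (t-t') t^{\gamma-1}$ followed by $t^{\gamma-1} \geq T^{\gamma-1} \geq T^{-1}$, which is exactly what your mean value theorem step recovers. No gaps.
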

\begin{proof}
	The first is by convexity of $t \mapsto t^{\gamma}$ on $\R_{> 0}$ for $\gamma \geq 1$. The second is because $t^{\gamma} - {t'}^{\gamma} \geq \gamma(t-t')t^{\gamma - 1}$ by concavity of $t \mapsto t^{\gamma}$ on $\R_{>0}$ for $\gamma \leq 1$, and then bounding $t^{\gamma - 1} \geq T^{\gamma - 1} \geq T^{-1}$.
\end{proof}

\begin{lemma}\label{lem:collectionmonotonicity}
	Let $p \in [1, \infty)$ and $q \in (1, \infty)$. There exists a constant $\alpha' > 0$ that depends only on $p$ and $q$ and satisfies the following. If $x_1,\ldots,x_k \in \{0,1\}^d$ is a $(k,s,t)$-collection with $s \geq k-1$ and $t < \binom{k}{2}$, then there exists a $(k,s,t+1)$-collection $(x'_1,\ldots,x'_k) \subset \{0,1\}^d$ such that
	\[
	\val_{p,q}(x'_1,\ldots,x'_k) \leq \val_{p,q}(x_1,\ldots,x_k) - \delta,
	\]
	where $\delta = (kd)^{-\alpha'}$.
\end{lemma}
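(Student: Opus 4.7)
The plan is to construct the desired $(k,s,t+1)$-collection by a local ``coordinate swap'' that replaces one private coordinate of each of $x_i$ and $x_{i'}$ with a single new shared coordinate, then to upper bound the resulting $F_{p,q}$ by plugging a carefully chosen test point into the defining minimization, and finally to compare this bound to the original $F_{p,q}$ via a first-order expansion at the original optimum $y^*$. Since $t < \binom{k}{2}$, pick $i \neq i'$ with $\langle x_i, x_{i'}\rangle = 0$. Call a coordinate $\ell$ \emph{private to $x_i$} if $[x_i]_\ell = 1$ and $[x_j]_\ell = 0$ for all $j \neq i$. Such an $\ell$ exists: by condition (ii), at most $k-2$ of the $s$ non-zero entries of $x_i$ are shared with some other vector (the vector $x_{i'}$ shares none), so $x_i$ has at least $s-(k-2) \geq 1$ private coordinates by the hypothesis $s \geq k-1$. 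Pick $\ell'$ private to $x_{i'}$ analogously, and define $x'_1,\dots,x'_k \in \{0,1\}^{d+1}$ by appending a new coordinate $\alpha$, setting $[x'_i]_\alpha = [x'_{i'}]_\alpha = 1$ and $[x'_j]_\alpha = 0$ for $j \neq i,i'$, while also zeroing $[x'_i]_\ell$ and $[x'_{i'}]_{\ell'}$. A direct check of conditions (i)--(iv) confirms this is a $(k,s,t+1)$-collection: the only inner product that changes is $\langle x'_i, x'_{i'}\rangle$, which becomes $1$; and because $\ell,\ell'$ were private, zeroing them cannot affect (iv).

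To upper bound $F_{p,q}(x'_1,\dots,x'_k)$, let $y^* \in \R^d$ be optimal for $F_{p,q}(x_1,\dots,x_k)$, write $a := y^*_\ell$ and $b := y^*_{\ell'}$, and note that Lemma~\ref{lem:yilowerbound} gives $a, b \geq (kd)^{-\alpha}$ for a constant $\alpha$ depending only on $p,q$. Define $y' \in \R^{d+1}$ agreeing with $y^*$ outside $\{\ell,\ell',\alpha\}$, with $y'_\ell = y'_{\ell'} = 0$ and $y'_\alpha = z \in [0,1]$ to be chosen. Letting $\bar A_j := \|x_j - y^*\|_q^q - |[x_j]_\ell - a|^q - |[x_j]_{\ell'} - b|^q$ denote the cost of $x_j$ against $y^*$ excluding coordinates $\ell,\ell'$, plugging in yields
\[
F_{p,q}(x'_1,\dots,x'_k) \leq h(z) := (\bar A_i + (1-z)^q)^{p/q} + (\bar A_{i'} + (1-z)^q)^{p/q} + \sum_{j \neq i,i'}(\bar A_j + z^q)^{p/q},
\]
while the original cost equals $(\bar A_i + (1-a)^q + b^q)^{p/q} + (\bar A_{i'} + a^q + (1-b)^q)^{p/q} + \sum_{j \neq i,i'}(\bar A_j + a^q + b^q)^{p/q}$. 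The intuition is that the original configuration ``wastes'' by carrying the singleton cost at two separate coordinates $\ell,\ell'$, whereas the new configuration merges these into a single shared coordinate $\alpha$ where the optimal $z$ can be chosen more efficiently.

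To quantify the decrease, choose $z$ to approximately minimize $h$ (proportional to the minimizer of the analogous single-coordinate problem with two ``source'' and $k-2$ ``sink'' weights, so $z$ is of order $(a+b)$ scaled by a factor depending on $p,q,k$), and Taylor-expand $h(z) - F_{p,q}(x_1,\dots,x_k)$ around $z=0$. The first-order optimality conditions for $y^*$ at coordinates $\ell$ and $\ell'$ (which read $(1-a)^{q-1} w_i = a^{q-1} \sum_{j \neq i} w_j$ with $w_j := A_j^{p/q-1}$, and analogously at $\ell'$) let us collect the leading linear-in-$z$ and linear-in-$a,b$ terms and show they sum to a strictly negative quantity. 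The main obstacle is that for $p \neq q$ the objective $F_{p,q}$ does not decouple across coordinates, so the decrease must be tracked through the $(\cdot)^{p/q}$ nonlinearity, and the sign analysis of remainders splits into sub-cases depending on whether $p/q$ is larger, smaller, or equal to $1$. Controlling these remainders requires combining the polynomial lower bound $a,b \geq (kd)^{-\alpha}$ with the trivial upper bound $A_j \leq \poly(kd)$, so that the ratios $A_j^{p/q-1}/A_{j'}^{p/q-1}$ appearing in the linearization are controlled to within a polynomial factor; this ultimately yields the claimed gap $\delta \geq (kd)^{-\alpha'}$ for some $\alpha' = \alpha'(p,q)$.
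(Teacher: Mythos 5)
Your construction and overall framework are essentially the paper's: pick a disjoint pair, use $s\geq k-1$ to find private coordinates, splice in a shared coordinate to get a $(k,s,t+1)$-collection, and upper-bound its value by evaluating at a test point built from the optimizer $y^*$ of the original problem, extracting the gap from Lemma~\ref{lem:yilowerbound}. (Two small remarks: the statement asks for a collection in $\{0,1\}^d$, so rather than appending a fresh coordinate you should place the new shared $1$ at $x_{i'}$'s private coordinate $\ell'$ and modify only $x_i$ --- this is exactly the paper's move --- or else invoke invariance of $\val_{p,q}$ under zero-padding; and your verification that the modified family is a $(k,s,t+1)$-collection is fine.)

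The genuine gap is in the decisive quantitative step. You never actually prove $h(z)\leq \val_{p,q}(x_1,\dots,x_k)-\delta$; you only sketch a plan (``choose $z$ to approximately minimize $h$, Taylor-expand around $z=0$, use stationarity of $y^*$ at $\ell,\ell'$ to show the leading linear terms are negative, control remainders by sub-cases''), and as described this plan is not sound. First, $h(0)$ need not be $\leq \val_{p,q}(x_1,\dots,x_k)$: zeroing $y$ at $\ell,\ell'$ \emph{increases} the two source terms (e.g.\ when $b^q< 1-(1-a)^q$ one has $(1-a)^q+b^q<1$), so negativity of $h'(0)$ alone proves nothing, and the comparison must simultaneously track the zeroth-order change at $\ell,\ell'$ and the gain from $z$. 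Second, $a=y^*_\ell$ and $b=y^*_{\ell'}$ are not small parameters --- Lemma~\ref{lem:yilowerbound} only bounds them \emph{below} --- so ``collecting the linear-in-$a,b$ terms'' is not a controlled expansion, and no remainder bound is actually supplied; this is precisely where the inverse-polynomial gap has to come from. The fix is short and avoids expansions entirely (it is the paper's argument): take $z=\min\bigl(1,(a^q+b^q)^{1/q}\bigr)$. Then $z\geq\max(a,b)$, so the sink terms do not increase (since $z^q\leq a^q+b^q$), the $x'_{i'}$ source term does not increase (since $(1-z)^q\leq(1-b)^q\leq a^q+(1-b)^q$), and the $x'_i$ source term's argument drops by at least $(1-a)^q+b^q-(1-z)^q\geq b^q$; Lemma~\ref{lem:pq-helper} converts this into a value drop of at least $\min\bigl(b^p,\tfrac{p}{qd}b^q\bigr)$, which is $\geq (kd)^{-\alpha'}$ by the lower bound on $b$ from Lemma~\ref{lem:yilowerbound}. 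Without this (or an honest execution of your expansion with remainder control), the lemma's inequality is not established.
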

\begin{proof}
	By $t < \binom{k}{2}$ and item (iii) of Definition~\ref{def:kstcollection}, there is a pair of vectors that is supported on disjoint entries. Without loss of generality, these two vectors are $x_1$ and $x_2$. Note that $x_1$ and $x_2$ can each share one non-zero entry with at most $k-2$ other vectors. Since $s \geq k-1$, item (i) of Definition~\ref{def:kstcollection} implies that each of the vectors $x_1,x_2$ has a non-zero entry that is zero for every other vector. 
	Thus, after a possible permutation of the $d$ coordinates, we may assume without loss of generality that $[x_1]_i = \mathds{1}[i=1]$ and $[x_2]_i = \mathds{1}[i=2]$ for all $i \in [k]$. 
	
	We construct a $(k,s,t+1)$-collection $(x'_1,\ldots,x'_k)$ by letting $x'_i = x_i$ for all $i \geq 2$, and setting $x'_1$ to be $x_1$ where the first two entries are modified: $[x'_1]_1 = 0$ and $[x'_1]_2 = 1$. It is straightforward to verify that this is a $(k,s,t+1)$-collection: (i) each vector $x'_i$ has $s$ non-zeros, (ii) and (iii) the only inner product between a pair that has changed is that now $\langle x'_1,  x'_2 \rangle = 1$, whereas before $\langle x_1,  x_2 \rangle = 0$, and (iv) only $x'_1$ and $x'_2$ are non-zero on coordinate $2$.

	We now prove that $\val_{p,q}(x_1,\ldots,x_k)$ is larger than $\val_{p,q}(x'_1,\ldots,x'_k)$ by at least $\delta$. Let $y \in [0,1]^d$ be such that $\val_{p,q}(x_1,\ldots,x_k) = \sum_{i=1}^k \|x_i - y\|_q^p$, and construct $y' \in \RR^d$ by setting $y'_1 = 0$, $y'_2 = \min(1,((y_1)^q + (y_2)^q)^{1/q}) \in [0,1]$, and $y'_i = y_i$ for all $j \geq 3$. From this choice of $y'$, we have the relations $y_1, y_2 \leq y_2'$, which will be used in the sequel. Then on one hand,
	\begin{align*}
		\val_{p,q}(x_1,\ldots,x_k) 
		=
		\sum_{i=1}^k \|x_i - y\|_{q}^p
		=
		\underbrace{((1 - y_1)^q + (y_2)^q + c_1)^{\frac{p}{q}}}_{h_1} + \underbrace{((y_1)^q + (1 - y_2)^q + c_2)^{\frac{p}{q}}}_{h_2} + \underbrace{\sum_{j=3}^k ((y_1)^q + (y_2)^q + c_j)^{\frac{p}{q}}}_{h_3} 
	\end{align*}
	where $c_i = \sum_{j=3}^d |[x_i]_j - y_j|^q \in [0,d-2]$ for $i \in [k]$. And on the other hand,
	\begin{align*}
		\val_{p,q}(x'_1,\ldots,x'_k)
		\leq \sum_{i=1}^k \|x'_i - y'\|_q^p \nonumber
		= \underbrace{((1-y'_2)^q + c_1)^{\frac{p}{q}}}_{h_1'}
		+ \underbrace{((1-y'_2)^q + c_2)^{\frac{p}{q}}}_{h_2'} 
		+ \underbrace{\sum_{i=3}^k ((y'_2)^q + c_i)^{\frac{p}{q}}}_{h_3'}.
	\end{align*}
	Therefore it suffices to show that $h_2' \leq h_2$, $h_3' \leq h_3$, and $h_1' \leq h_1 - \delta$.
	\par To show $h_2' \leq h_2$, note that $(1-y_2')^q \leq (1-y_2)^q \leq y_1^q + (1-y_2)^q$ since $y_2 \leq y_2' \leq 1$. To show $h_3' \leq h_3$, note that the construction of $y'$ implies $(y_2')^q \leq (y_1)^q + (y_2)^q$. Finally, to show $h_1' \leq h_1 - \delta$, note that $h_1 = t^{p/q}$ and $h_1' = (t')^{p/q}$ where $t = (1-y_1)^q + y_2^q + c_1$ and $t' = (1-y_2')^q + c_1$. Observe that $0 \leq t' \leq t \leq d$ and in fact  $t - t' = (1-y_1)^q + y_2^q - (1-y_2')^q \geq y_2^q$ because $y_1 \leq y_2' \leq 1$. Thus by Lemma~\ref{lem:pq-helper}, we have that $h_1 - h_1' = t^{p/q} - {t'}^{p/q}$ is at least $y_2^p$ if $p \geq q$, and at least $(p/qd) y_2^q$ if $p < q$. In either case, this gap is at least inverse polynomially large in $k$ and $d$ by the analogous inverse polynomial lower bound on $y_2$ in Lemma~\ref{lem:yilowerbound}. Choosing a suitable constant $\alpha'$ completes the proof.
\end{proof}

We now reason about $(k,s,t)$-collections that could arise as the embedding of a $k$-clique (i.e., the case that $t = \binom{k}{2}$). We prove that all such $(k,s,t)$-collections have the same value.
\begin{lemma}\label{lem:cliquesequal}
	If $(x_1,\ldots,x_k)$ and $(x'_1,\ldots,x'_k)$ are both $(k,s,\binom{k}{2})$-collections, then $\val_{p,q}(x_1,\ldots,x_k) = \val_{p,q}(x'_1,\ldots,x'_k)$.
\end{lemma}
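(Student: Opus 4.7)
The plan is to prove the lemma by showing that every $(k,s,\binom{k}{2})$-collection is equivalent to a single canonical ``model'' collection up to a permutation of coordinates (and possibly padding by all-zero coordinates), and then to observe that $F_{p,q}$ is invariant under both operations.

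First, I would nail down the rigid structure of a $(k,s,\binom{k}{2})$-collection. By item (ii) of Definition~\ref{def:kstcollection}, \emph{every} pair of vectors shares exactly one non-zero coordinate; by item (iv), each such shared coordinate is shared by exactly one pair. So among the non-zero coordinates of $x_i$, exactly $k-1$ are ``pair coordinates'' (one per other vector) and the remaining $s-(k-1)$ are ``private coordinates'' supporting only $x_i$. In particular $s \geq k-1$, and the union $S_i = \bigcup_i \supp(x_i)$ has exactly $D := \binom{k}{2} + k(s-k+1)$ elements: $\binom{k}{2}$ pair coordinates, one for each unordered pair $\{i,i'\}$, and $k(s-k+1)$ private coordinates.

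Second, I would reduce each collection to its support. On any coordinate $j$ where $[x_i]_j = 0$ for all $i$, the map $y_j \mapsto \sum_i |[x_i]_j - y_j|^q = k|y_j|^q$ is minimized at $y_j = 0$, so such coordinates can be dropped without changing $F_{p,q}$. Thus we may assume both collections live in $\R^D$ with full support.

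Third, given any two $(k,s,\binom{k}{2})$-collections $(x_1,\ldots,x_k)$ and $(x_1',\ldots,x_k')$ on $[D]$, the structural analysis above yields a canonical labeling: identify each coordinate of the first collection with either the pair $\{i,i'\} \in \binom{[k]}{2}$ it supports, or the index $(i,\ell) \in [k] \times [s-k+1]$ of the private coordinate it contributes. The same holds for the second collection. These labelings give a bijection $\sigma: [D] \to [D]$ such that $[x_i']_{\sigma(j)} = [x_i]_j$ for every $i \in [k]$ and $j \in [D]$.

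Finally, for any $y \in \R^D$ let $y' \in \R^D$ be defined by $y'_{\sigma(j)} = y_j$. Then $\|x_i' - y'\|_q = \|x_i - y\|_q$ for each $i$ since the $q$-norm is invariant under coordinate permutations, so $\sum_i \|x_i' - y'\|_q^p = \sum_i \|x_i - y\|_q^p$, and taking the infimum over $y$ on both sides (noting $y \mapsto y'$ is a bijection on $\R^D$) yields $F_{p,q}(x_1',\ldots,x_k') = F_{p,q}(x_1,\ldots,x_k)$. There is no genuine obstacle here; the main care is in checking that the support analysis (Steps 1--2) produces matching combinatorial labels so that the permutation $\sigma$ is well-defined.
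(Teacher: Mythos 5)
Your argument is essentially the same as the paper's: both show that any two $(k,s,\binom{k}{2})$-collections coincide up to a permutation of coordinates (and padding with zero coordinates), then invoke the invariance of $F_{p,q}$ under these operations. Your version just spells out the bijection $\sigma$ more explicitly than the paper does, which is a cosmetic difference, not a different route.
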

\begin{proof}
	By adding padding zeros, we may assume that $x_1,\ldots,x_k,x'_1,\ldots,x'_k \in \{0,1\}^d$ for some common dimension $d$. Observe that to be a $(k,s,\binom{k}{2})$ collection, the $k$ vectors are such that each pair of vectors shares exactly one non-zero entry (on which all other vectors are zero) and otherwise has disjoint support. Thus $(x_1,\ldots,x_k)$ equals $(x'_1,\ldots,x'_k)$ up to a permutation of the $d$ coordinates and the $k$ vectors. The claim follows since $\val_{p,q}(x_1,\ldots,x_k)$ is invariant with respect to padding zeros, permuting the vectors, and permuting coordinates.
\end{proof}

We bring the above lemmas together to prove Lemma~\ref{lem:gap} in the case $q \in (1,\infty)$:
\begin{proof}[Proof of Lemma~\ref{lem:gap} in the case of $q \in (1,\infty)$]
	We are given an $n$-vertex, $D$-regular graph $G = (V,E)$. We compute the embedding vectors $\{x_{i,j}\}_{i \in [k], j \in [n]}$ by letting $x_{i,j} = \phi(i,v_i) \in \{0,1\}^d$ for all $i \in [k], j \in [n]$. This can be computed in $\poly(n,k)$ time by using the formula \eqref{eq:embedding-q1inf}, since $d = \binom{k}{2}n^2 \leq \poly(n,k)$. Let $v_1,\ldots,v_k \in V$ be a sequence of not-necessarily-distinct vertices. By Lemma~\ref{lem:whykst}, $(x_{1,v_1},\ldots,x_{k,v_k})$ is a $(k,D(k-1), |E(v_1,\ldots,v_k)|)$-collection of vectors.
	
	Now suppose we have access to $x'_1,\ldots,x'_k \in \{0,1\}^{d'}$ which is a $(k,D(k-1),\binom{k}{2})$ collection of vectors. If $\{v_1,\ldots,v_k\}$ is a $k$-clique in $G$, then $|E(v_1,\ldots,v_k)| = \binom{k}{2}$, so by Lemma~\ref{lem:cliquesequal}:
	\begin{equation*}
		\val_{p,q}(x_{1,v_1},\ldots,x_{k,v_k}) = \val_{p,q}(x'_1,\ldots,x'_k)
	\end{equation*}
	On the other hand, if $\{v_1,\ldots,v_k\}$ is not a $k$-clique in $G$, then we prove that $\val_{p,q}(x_{1,v_1},\ldots,x_{k,v_k})$ is strictly larger than $\val_{p,q}(x'_1,\ldots,x'_k)$, with a polynomial-size gap. In this case, $|E(v_1,\ldots,v_k)| < \binom{k}{2}$. Using Lemma~\ref{lem:collectionmonotonicity}, it follows inductively on $t$ that for each $|E(v_1,\ldots,v_k)| < t \leq \binom{k}{2}$ there is a $(k,s,t)$-collection $z^{(t)}_1,\ldots,z^{(t)}_k \in \{0,1\}^d$ such that $\val_{p,q}(z^{(t)}_1,\ldots,z^{(t)}_k) \leq \val_{p,q}(x_{1,v_1},\ldots,x_{k,v_k}) - \delta$, where $\delta = (kd)^{-\alpha'}$ as in Lemma~\ref{lem:collectionmonotonicity}. Therefore, $\delta \geq (nk)^{-\alpha''}$ for a suitable constant $\alpha'' \geq 0$ that depends only on $p$ and $q$. Thus, for $T = \binom{k}{2}$, we have
	\begin{equation*}
		\val_{p,q}(x_{1,v_1},\ldots,x_{k,v_k}) \geq \val_{p,q}(z^{(T)}_1,\ldots,z^{(T)}_k) + \delta = \val_{p,q}(x'_1,\ldots,x'_k) + \delta,
	\end{equation*}
	where the equality is by Lemma~\ref{lem:cliquesequal}. Therefore, to conclude the proof of Lemma~\ref{lem:gap} in the case $q \in (1,\infty)$, it only remains to prove that $x'_1,\ldots,x'_k \in \mathbb{R}^{d'}$ exists and that $\val_{p,q}(x'_1,\ldots,x'_k)$ and $\delta$ can be efficiently approximated.
	
	We construct $x'_1,\ldots,x'_k \in \{0,1\}^{d'}$ by letting $d' = \binom{k}{2} + k(D-1)(k-1)$. We index the first $\binom{k}{2}$ coordinates by pairs $(i',i'')$ such that $i' < i'' \in [k]$, and let $[x'_i]_{(i',i'')} = 1$ if and only if $i \in \{i',i''\}$. Finally, the remaining $k(D-1)(k-1)$ coordinates are used to pad each of the vectors $x'_1,\ldots,x'_k$ with $(D-1)(k-1)$ ones that are disjoint from the other vectors. From the construction, all distinct pairs of vectors $x'_i$ and $x'_{i'}$ have inner product $\langle x'_i,x'_{i'} \rangle = 1$, all vectors have number of nonzero entries $\|x'_i\|_0 = D(k-1)$, and every entry is nonzero for at most two vectors. Thus, $x'_1,\ldots,x'_k$ is a $(k,D(k-1),\binom{k}{2})$-collection that we have constructed in $\poly(n,k)$ time.
	
	Finally, define $\Delta := (nk)^{-\alpha''} / 2$, which is chosen so that $\Delta \leq \delta / 2$. In $\poly(n,k)$ time, compute a value $\gamma$ such that $\val_{p,q}(x'_1,\ldots,x'_k) \leq \gamma \leq \val_{p,q}(x'_1,\ldots,x'_k) + \Delta$. This may be done via a standard, out-of-the-box convex optimization algorithm, since the value $\val_{p,q}(x'_1,\ldots,x'_k)$ is the solution to a convex optimization problem by Lemma~\ref{lem:pq-convexity}. We conclude that if $\{v_1,\ldots,v_k\}$ is a $k$-clique then:
	$$\val_{p,q}(x_{1,v_1},\ldots,x_{k,v_k}) \leq \gamma,$$ and otherwise
	$$\val_{p,q}(x_{1,v_1},\ldots,x_{k,v_k}) \geq \gamma + \Delta,$$ where $\gamma$ and $\Delta$ can be computed in $\poly(n,k)$ time. This proves Lemma~\ref{lem:gap} for $q \in (1,\infty)$.
\end{proof}

\subsection{Case $q = 1$}\label{app:q1}
We prove Lemma~\ref{lem:gap} in the case of $q=1$ and general $p \in [1,\infty)$. Let $G = (V,E)$ be an $n$-vertex, $D$-regular graph and let $k > 0$ be an even integer. A new embedding function $\psi$ is needed to prove the case $q = 1$, since the embedding $\phi$ defined in \eqref{eq:embedding-q1inf} fails.
\begin{remark}[Failure of the embedding in the $q = 1$ case]\label{rem:q1failure}
	The embedding $\phi$ defined in \eqref{eq:embedding-q1inf} cannot be used for the case $q = 1$, since for any $k > 4$ and vertices $v_1,\ldots,v_k \in V$, it holds that $\val_{p,1}(\phi(1,v_1),\ldots,\phi(k,v_k)) = k (D(k-1))^p$, which does not depend on the number of edges $|E(v_1,\ldots,v_k)|$ between the vertices $\{v_i\}_{i \in [k]}$. This formula follows from a calculation which shows that $y = \vec{0}$ is an optimal choice for the optimization problem \eqref{eq:F-def} defining $\val_{p,1}(\phi(1,v_1),\ldots,\phi(k,v_k))$. Hence, the embedding $\phi$ cannot be used to distinguish between $k$-cliques and non-$k$-cliques when $q = 1$.
\end{remark}

Therefore, we define a new embedding $\psi = \psi_G : [k] \times [n] \to \{-1,0,1\}^d$, where $d = 2\binom{k}{2}n^2$. Each entry is indexed by an element of 
\[
\Sigma = \big\{(\ell,u,\ell',u',s) : \ell < \ell' \in [k], \mbox{ and } u,u' \in [n] \cong V, \mbox{ and } s \in \{+1,-1\} \big\}.
\]
For shorthand, we denote an element $(\ell,u,\ell',u',s) \in \Sigma$ by $\sigma$. For $\sigma \in \Sigma$, set
\begin{align}
	[\psi(i,v)]_{\sigma} = \begin{cases}
		\tau(\ell,\ell',i), & i \not\in \{\ell,\ell'\} \\
		s, & (i,v) = (\ell,u) \\
		s, & (i,v) = (\ell',u') \mbox{ and } (u,u') \in E \\
		-s, & (i,v) = (\ell',u') \mbox{ and } (u,u') \not\in E \\
		0, & i \in \{\ell,\ell'\} \mbox{ and } (i,v) \not\in \{(\ell,u),(\ell',u')\}
	\end{cases}.
	\label{eq:embedding-q1}
\end{align}
where we define $\tau(\ell,\ell',i) = (-1)^{|[i] \sm \{\ell,\ell'\}|}$. The restriction to even values of $k$, the choice $\tau$, and the addition of the extra parameter $s$ are carefully crafted to ensure that the vectors are sufficiently symmetric so that the optimal solution $y \in \RR^d$ to the optimization problem \eqref{eq:F-def} for $\val_{p,1}(\psi(1,v_1),\ldots,\psi(k,v_k))$ satisfies $\|\psi(i,v_i) - y\|_1 = \|\psi(i',v_{i'}) - y\|_1$ for all $i,i' \in [k]$. This is crucial to our proof that $\val_{p,1}(\psi(1,v_1),\ldots,\psi(k,v_k))$ is minimized when $\{v_1,\ldots,v_k\}$ is a $k$-clique. To prove Lemma~\ref{lem:gap}, we establish the following key lemma.

\begin{lemma}\label{lem:qequalsonegap}
	Let $v_1,\ldots,v_k \in V$ be vertices that are not necessarily distinct, and let $t = |E(v_1, \dots, v_k)|$.
	Then $$\val_{p,1}(\psi(1,v_1),\ldots,\psi(k,v_k)) \geq k^{1-p}\left(nk(k-1)(nk - 2n + 2) - 4t\right)^p.$$
	Furthermore, the bound holds with equality if $t = \binom{k}{2}$.
\end{lemma}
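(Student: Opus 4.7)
The plan is to combine Jensen's inequality with a coordinate-wise analysis tailored to the embedding $\psi$. Since $p \geq 1$ and $t \mapsto t^p$ is convex on $\R_{\geq 0}$, for every $y \in \R^d$ the power-mean inequality gives
$$\sum_{i=1}^k \|z_i - y\|_1^p \;\geq\; k^{1-p}\Bigl(\sum_{i=1}^k \|z_i - y\|_1\Bigr)^p,$$
where $z_i := \psi(i, v_i)$. Minimising both sides in $y$ reduces the task to (a) showing that $\min_y \sum_i \|z_i - y\|_1 = nk(k-1)(nk-2n+2) - 4t$, and (b) in the clique case $t = \binom{k}{2}$, producing a single $y^*$ that both attains this minimum and equalises the values $\{\|z_i - y^*\|_1\}_{i \in [k]}$, so that the power-mean step becomes tight.

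For (a), I exploit the separability of $\|\cdot\|_1$: $\min_y \sum_i \|z_i - y\|_1 = \sum_{\sigma \in \Sigma} \min_{y_\sigma \in \R} \sum_i |[z_i]_\sigma - y_\sigma|$. For a fixed coordinate $\sigma = (\ell, u, \ell', u', s)$, the key structural observation is that the values $\tau(\ell, \ell', i)$ for $i \notin \{\ell, \ell'\}$ alternate in sign in $i$, so since $k-2$ is even there are exactly $k/2 - 1$ values $+1$ and $k/2 - 1$ values $-1$. The remaining two indices $i \in \{\ell, \ell'\}$ contribute values in $\{-s, 0, s\}$ whose pattern depends on whether $v_\ell = u$, $v_{\ell'} = u'$, and whether $(u, u') \in E$. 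A short median-based case analysis then gives per-coordinate minima $k-2$ (no match), $k-1$ (exactly one match), $k-2$ (both matches with edge), and $k$ (both matches without edge). Counting coordinates of each type as $(\ell, \ell', s, u, u')$ varies, and using $t = \sum_{\ell<\ell'}\mathds{1}[(v_\ell, v_{\ell'}) \in E]$, a short algebraic simplification collapses the total to exactly $nk(k-1)(nk-2n+2) - 4t$.

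For (b), I would exhibit the explicit candidate $y^* \in \{-1,0,1\}^d$ defined by $y^*_\sigma = s$ on the ``Case-D1'' coordinates, i.e., those $\sigma = (\ell, u, \ell', u', s)$ with $v_\ell = u$ and $v_{\ell'} = u'$ (all such coordinates fall in the ``both-matches-with-edge'' sub-case since $\{v_1, \dots, v_k\}$ is a clique), and $y^*_\sigma = 0$ elsewhere. By the case analysis above, $y^*$ attains the coordinate-wise minimum. The main obstacle is showing that $\|z_i - y^*\|_1$ is the same constant for every $i \in [k]$. This splits into two double-counting arguments. On D1 coordinates, the only nonzero contribution to $|[z_i]_\sigma - y^*_\sigma|$ comes from $i \notin \{\ell, \ell'\}$ and equals $2 \cdot \mathds{1}[\tau(\ell, \ell', i) \neq s]$; summing over $s \in \{\pm 1\}$ kills the $\tau$-dependence and gives the $i$-independent total $(k-1)(k-2)$. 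On non-D1 coordinates, $|[z_i]_\sigma - y^*_\sigma| = |[z_i]_\sigma| \in \{0, 1\}$ counts non-zeros of $\psi(i, v_i)$ restricted to non-D1 $\sigma$; splitting into the sub-cases $i \notin \{\ell, \ell'\}$ and $i \in \{\ell, \ell'\}$ and enumerating shows this count is also independent of $i$. Adding the two contributions yields $\|z_i - y^*\|_1 = (k-1)[n^2(k-2) + 2(n-1)]$ for every $i$, which is exactly $1/k$ of the target sum, making the power-mean inequality tight. The use of both signs $s \in \{\pm 1\}$, the specific parity function $\tau$, and the evenness of $k$ are each essential for producing this symmetry.
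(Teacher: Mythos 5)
Your proposal is correct and follows essentially the same route as the paper's proof: the power-mean/Jensen step plus $\ell_1$ separability, the same per-coordinate case analysis (your observation that the $\tau(\ell,\ell',i)$ split evenly into $k/2-1$ values of each sign is exactly the paper's helper lemma), and the same witness $y^*$ supported on the matched-pair coordinates for the clique case. Your bookkeeping for $\|z_i-y^*\|_1$ (D1 versus non-D1 coordinates) differs only cosmetically from the paper's computation of $\|x_i\|_1$ plus corrections, and yields the same value $(k-1)(n^2k-2n^2+2n-2)$.
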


The proof of this lemma is technically involved because it requires analyzing a convex optimization problem by hand. For space considerations, the proof is provided in Appendix~\ref{sm:q1}. We now show how Lemma~\ref{lem:qequalsonegap} implies Lemma~\ref{lem:gap}.

\begin{proof}[Proof of Lemma~\ref{lem:gap} for $q = 1$]
	Given the $D$-regular, $n$-vertex graph $G$, and even integer $k > 0$, and parameters $p \in [1,\infty)$, $q = 1$, let the embedding $\{x_{i,j}\}_{i \in [k], j \in [n]}$ be given by $x_{i,j} = \psi(i,j)$ for all $i \in [k], j \in V \cong [n]$. This is $\poly(n,k)$-time computable by using the formula \eqref{eq:embedding-q1}, since the dimension is also polynomial: $d \leq \poly(n,k)$. 
	
	Furthermore, by Lemma~\ref{lem:qequalsonegap}, if $\{v_1,\ldots,v_k\} \subset V \cong [n]$ is a $k$-clique in $G$, then $\val_{p,1}(x_{1,v_1},\ldots,x_{k,v_k}) = \gamma := k^{1-p}(nk(k-1)(nk-2n+2) - 2k(k-1))^p$, and otherwise $\val_{p,1}(x_{1,v_1},\ldots,x_{k,v_k}) \geq \gamma + \Delta$, where $\Delta := k^{1-p}(nk(k-1)(nk-2n+2) - 2k(k-1) + 4)^p - k^{1-p}(nk(k-1)(nk-2n+2) - 2k(k-1))^p \geq 1 / k^p$.
\end{proof}

\subsection{Case $q = \infty$}\label{app:qinf}

Here, we prove Lemma~\ref{lem:gap} in the case $q = \infty$.  Let $G = (V,E)$ be an $n$-vertex graph and let $k > 0$. Unfortunately, the embedding used for the case of $q \in (1,\infty)$ fails, as we now remark, so we need a new embedding.
\begin{remark}[Failure of the embedding in the $q = \infty$ case]\label{rem:qinffailure}
	The embedding $\phi$ defined in \eqref{eq:embedding-q1inf} cannot be used in the case $q = \infty$, since for any $v_1,\ldots,v_k \in V$ and $k \geq 3$, we have
	$\val_{p,\infty}(\phi(1,v_1),\ldots,\phi(k,v_k)) = k / 2^p$, since one can show that choosing $y \in \mathbb{R}^d$ with $y_{j} = \frac{1}{2}$ for all $j \in [d]$ is optimal. Thus, since $\val_{p,\infty}(\phi(1,v_1),\ldots,\phi(k,v_k))$ is a fixed constant, we cannot distinguish between non-cliques and cliques using the embedding $\phi$ from \eqref{eq:embedding-q1inf}.
\end{remark}

Instead, for the $q = \infty$ case, consider the following embedding $\xi = \xi_G : [k] \times [n] \to \{-1,0,1\}^d$, where $d = \binom{k}{2}n^2$. The entries of $\xi$ are indexed by tuples of the form $(\ell,u,\ell',u')$ where $\ell < \ell' \in [k]$ and $u,u' \in [n] \cong V$. Set
\begin{align}
	[\xi(i,v)]_{(\ell,u,\ell',u')} = \begin{cases} 0, & (i,v) \not\in {\{(\ell,u),(\ell',u')\}} \\
		1, & (i,v) = (\ell',u') \\
		1, & (i,v) = (\ell,u) \mbox{ and } (u,u') \in E  \\
		-1, & (i,v) = (\ell,u) \mbox{ and } (u,u') \not\in E \end{cases}
	\label{eq:embedding-qinf}
\end{align}

In the following two lemmas, let $v_1, \dots, v_k \in V$, be vertices that are not necessarily distinct, and denote $x_i = \xi(i,v_i)$ for $i \in [k]$.

\begin{lemma}\label{lem:qinf-clique}
	If $\{v_1,\ldots,v_k\} \subset V$ forms a clique of size $k$ in graph $G$, then
	\[
	\val_{p,\infty}(x_1, \dots, x_k) \leq k/2^p.
	\]
\end{lemma}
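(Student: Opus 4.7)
The plan is to exhibit an explicit $y^* \in \R^d$ for which $\|x_i - y^*\|_\infty \leq 1/2$ holds simultaneously for every $i \in [k]$; summing the $p$-th powers then gives $\val_{p,\infty}(x_1,\ldots,x_k) \leq \sum_{i=1}^k \|x_i - y^*\|_\infty^p \leq k/2^p$ directly from the definition \eqref{eq:F-def}, so the entire problem reduces to a coordinate-wise construction of such a $y^*$.

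First I would catalogue the ``active'' indices at each coordinate. By inspection of \eqref{eq:embedding-qinf}, at a coordinate $\sigma = (\ell, u, \ell', u')$ the entry $[x_i]_\sigma = [\xi(i, v_i)]_\sigma$ can be nonzero only when $i \in \{\ell, \ell'\}$ and $v_i$ matches the corresponding vertex slot of $\sigma$. Consequently, on every coordinate at most two of the $k$ vectors contribute a nonzero value, and I would choose $y^*_\sigma$ by a short case split: if no $x_i$ is nonzero at $\sigma$, take $y^*_\sigma = 0$; if exactly one is nonzero with value $\pm 1$, take $y^*_\sigma = \pm 1/2$; and if both $i = \ell$ and $i = \ell'$ are active (meaning $v_\ell = u$ and $v_{\ell'} = u'$) with agreeing values, take $y^*_\sigma = 1/2$.

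The only scenario that could obstruct the construction is when $v_\ell = u$, $v_{\ell'} = u'$, and $[x_\ell]_\sigma = -1$ while $[x_{\ell'}]_\sigma = +1$, since then the $1/2$-balls around $-1$ and $+1$ in coordinate $\sigma$ would intersect emptily. This is the main (and essentially only) obstacle, and the $k$-clique hypothesis resolves it exactly: when $\{v_1,\ldots,v_k\}$ forms a clique the vertices are distinct (so $u = v_\ell \neq v_{\ell'} = u'$, ruling out any self-loop pathology) and pairwise adjacent, hence whenever both $v_\ell = u$ and $v_{\ell'} = u'$ we have $(u,u') = (v_\ell, v_{\ell'}) \in E$, and the third clause of \eqref{eq:embedding-qinf} forces $[x_\ell]_\sigma = +1$ to match $[x_{\ell'}]_\sigma = +1$.

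Once this case analysis is checked, the coordinate-wise bounds $|[x_i]_\sigma - y^*_\sigma| \leq 1/2$ assemble into $\|x_i - y^*\|_\infty \leq 1/2$ for each $i$, and the lemma follows. The only routine bookkeeping step is double-checking the ``active entries'' accounting to confirm that no third index $i \notin \{\ell, \ell'\}$ contributes a nonzero value at coordinate $\sigma$, which is immediate from the first clause of \eqref{eq:embedding-qinf} since it forces $[\xi(i,v)]_\sigma = 0$ whenever $(i,v) \notin \{(\ell,u),(\ell',u')\}$.
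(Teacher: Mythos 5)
Your proposal is correct and follows essentially the same route as the paper: both construct an explicit $y^*$ coordinate-by-coordinate so that $\|x_i - y^*\|_\infty \leq 1/2$ for every $i$, observing that at most two vectors are active per coordinate and that the only obstruction — a coordinate carrying both a $-1$ (from the $(\ell,u)$ slot with $(u,u')\notin E$) and a $+1$ (from the $(\ell',u')$ slot) — is ruled out precisely by the clique hypothesis. The paper's choice of $y^*_\sigma \in \{-1/2,+1/2\}$ versus your occasional $y^*_\sigma = 0$ on all-zero coordinates is an immaterial difference.
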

\begin{proof}
	Define $y^* \in \RR^d$ as follows. For $\ell < \ell' \in [k]$ and $u,u' \in [n]$, set
	$$[y^*]_{(\ell,u,\ell',u')} = \begin{cases} -1/2, & \mbox{if } \exists i \in [k] \mbox{ such that }[x_i]_{(\ell,u,\ell',u')} = -1 \\ 1/2, & \mbox{ otherwise}\end{cases}.$$
	In the first case, let $i \in [k]$ be such that $[x_i]_{(\ell,u,\ell',u')} = -1$. Thus, we must have $(i,v_i) = (\ell, u)$. We observe that in this case, $[x_{i'}]_{(\ell,u,\ell',u')} \in \{-1,0\}$ for all $i' \in [k]$. Otherwise there must $i' \in [k]$ such that $(i',v_{i'}) = (\ell',u')$. This implies that $(v_{i},v_{i'}) \notin E$, since $[x_i]_{(i,v_i,i',v_{i'})} = -1$, which contradicts the assumption that $\{v_1, \dots, v_k\}$ is a clique. On the other hand, in the second case, $[x_i]_{(\ell,u,\ell',u')} \in \{0,1\}$ for all $i \in [k]$.
	
	Thus $\|x_i - y^*\|_{\infty} \leq 1/2$ for all $i \in [k]$, hence 
	$
	\val_{p,\infty}(x_1, \dots, x_k)
	= \min_{y \in \R^d} \sum_{i=1}^k \|x_i - y\|_{\infty}^p
	\leq \sum_{i=1}^k \|x_i - y^*\|_{\infty}^p
	\leq k/2^p$.
\end{proof}

\begin{lemma}\label{lem:qinf-notclique} 
	Suppose $n \geq 3$. If $\{v_1, \dots, v_k\} \subset V$ does not form a clique of size $k$ in graph $G$, then
	\[
	\val_{p,\infty}(x_1, \dots, x_k) \geq 2 + (k-2)/2^p.
	\]
\end{lemma}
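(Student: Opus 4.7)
The plan is to establish $\sum_{i=1}^k \|x_i - y\|_\infty^p \geq 2 + (k-2)/2^p$ for every $y \in \R^d$ by identifying a single pivotal coordinate of the embedding that forces a rigid imbalance between the contributions from $x_1, x_2$ (which I will pick to witness a non-edge) and those from the remaining vectors. Since $\{v_1, \ldots, v_k\}$ fails to form a $k$-clique, after relabeling I may assume $(v_1, v_2) \notin E$ (allowing $v_1 = v_2$ since $G$ has no self-loops), and set $\sigma^* := (1, v_1, 2, v_2)$. Direct inspection of~\eqref{eq:embedding-qinf} gives $[x_1]_{\sigma^*} = -1$, $[x_2]_{\sigma^*} = 1$, and $[x_i]_{\sigma^*} = 0$ for $i \geq 3$.

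First I would extract from $\sigma^*$ two key estimates: the pair bound $\|x_1 - y\|_\infty^p + \|x_2 - y\|_\infty^p \geq |1 + y_{\sigma^*}|^p + |1 - y_{\sigma^*}|^p \geq 2$ (by convexity of $t \mapsto |1+t|^p + |1-t|^p$, whose minimum at $t = 0$ equals $2$), and the single-vector bound $\|x_i - y\|_\infty \geq |y_{\sigma^*}|$ for $i \geq 3$. Together these immediately handle the case $|y_{\sigma^*}| \geq 1/2$, as they yield a per-vector contribution of at least $1/2^p$ from each of $x_3, \ldots, x_k$. The case $|y_{\sigma^*}| < 1/2$ with $\min_{i \geq 3} \|x_i - y\|_\infty \geq 1/2$ is equally immediate, so the substantive work reduces to the remaining sub-case in which some $i_0 \in \{3, \ldots, k\}$ satisfies $a := \|x_{i_0} - y\|_\infty < 1/2$.

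The core technical step in this last case is an auxiliary bound I plan to prove: $\|x_j - y\|_\infty \geq 1 - a$ for every $j \in \{3, \ldots, k\}\setminus\{i_0\}$. The idea is to look at the auxiliary coordinate $\alpha_j := (1, v_1, j, v_j)$, where~\eqref{eq:embedding-qinf} gives $[x_j]_{\alpha_j} = 1$ (because $(j, v_j)$ occupies the second position and that case of the formula outputs $+1$ unconditionally) and $[x_{i_0}]_{\alpha_j} = 0$ (because $i_0 \notin \{1, j\}$ prevents $(i_0, v_{i_0})$ from matching either position, regardless of coincidences among the $v_i$). Thus $\|x_{i_0} - y\|_\infty \leq a$ forces $|y_{\alpha_j}| \leq a$, and $\|x_j - y\|_\infty \geq |1 - y_{\alpha_j}| \geq 1 - a$. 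Swapping the roles of $i_0$ and any potential second small index also shows that the small-norm index in $\{3, \ldots, k\}$ is unique: $\|x_{i_1} - y\|_\infty < 1/2$ for some $i_1 \neq i_0$ in $\{3, \ldots, k\}$ would force $\|x_{i_0} - y\|_\infty \geq 1/2$, contradicting $\|x_{i_0} - y\|_\infty = a < 1/2$.

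Combining the pair bound with $\|x_{i_0} - y\|_\infty^p = a^p$ and the auxiliary bound for the $k-3$ other indices, the total is at least $2 + a^p + (k-3)(1-a)^p$. The proof then reduces to the elementary inequality $a^p + (k-3)(1-a)^p \geq (k-2)/2^p$ on $a \in [0, 1/2]$ for $k \geq 4$, which I would verify by a short convexity-and-derivative argument: the LHS is convex in $a$, its critical point $a^\star$ satisfies $a^\star/(1 - a^\star) = (k-3)^{1/(p-1)} \geq 1$ and so lies at $\geq 1/2$, placing the minimum on $[0, 1/2]$ at the right endpoint where the value is $(k-2)/2^p$ (the case $p = 1$ reducing directly to the linear inequality $(k-3) + (4-k)a \geq (k-2)/2$). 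The main obstacle I expect is the case-analysis justifying the auxiliary bound across all configurations of the $v_i$'s---the hypotheses do not force the vertices to be distinct---which is resolved by exploiting the asymmetry in~\eqref{eq:embedding-qinf} that outputs $+1$ whenever $(i, v)$ matches the second position, without any edge condition.
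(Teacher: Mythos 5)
Your proof is correct and takes a genuinely different route from the paper's. The paper argues purely via pairwise $\ell_\infty$ distances combined with Jensen's inequality on the convex map $y \mapsto \|x - y\|_\infty^p$: the non-edge gives $\|x_1 - x_2\|_\infty = 2$, hence $\|x_1-y\|_\infty^p + \|x_2-y\|_\infty^p \geq 2$; and for \emph{all} pairs $i \neq i'$ it exhibits a witness coordinate $(i,u,i',v_{i'})$ with $u \in V \setminus \{v_i,v_{i'}\}$ (this is where $n \geq 3$ enters) to get $\|x_i - x_{i'}\|_\infty \geq 1$, then averages over pairs $3 \leq i < i' \leq k$ to obtain $\sum_{i \geq 3}\|x_i - y\|_\infty^p \geq (k-2)/2^p$ --- no case analysis on $y$. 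You argue in terms of $y$'s coordinates directly: your pair bound along $\sigma^* = (1,v_1,2,v_2)$ gives the same $+2$, and then you split on $|y_{\sigma^*}|$ and use the auxiliary coordinates $\alpha_j = (1,v_1,j,v_j)$ to show at most one of $x_3,\dots,x_k$ can be $\ell_\infty$-close to $y$, closing with the univariate inequality $a^p + (k-3)(1-a)^p \geq (k-2)/2^p$ on $[0,1/2]$. Your use of $\alpha_j$ checks out: the second-position case of $\xi$ outputs $+1$ unconditionally, and $[x_{i_0}]_{\alpha_j} = 0$ follows from $i_0 \notin \{1,j\}$ alone (no distinctness of the $v_i$ is needed), and $\alpha_j \neq \sigma^*$ since $j \geq 3$. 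Interestingly, your witness coordinates always exist, so you never actually use the hypothesis $n \geq 3$; that is a very small gain in generality, at the cost of a longer case analysis than the paper's Jensen averaging. Both proofs implicitly assume $k \geq 4$ (the paper's $\tfrac{1}{k-3}$ averaging degenerates at $k=3$, and your case 2b has no remaining indices to supply the missing $(k-2)/2^p$), which is harmless since the hardness reduction uses even $k \geq 4$.
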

\begin{proof}
	Without loss of generality $(v_1,v_2) \not\in E$. We make two observations that hold for any $y \in \R^d$.
	\par First, note that $\|x_1 - x_2\|_{\infty} = 2$ because $[x_1]_{(1,v_1,2,v_2)} = -1$ and $[x_2]_{(1,v_1,2,v_2)} = 1$. Thus by Jensen's inequality on the convex function $y \mapsto \|x-y\|_{\infty}^p$ (Lemma~\ref{lem:pq-convexity}), we have
	\begin{align}
		\frac{\left\|x_1-y
			\right\|_{\infty}^p + \left\|x_2-y\right\|_{\infty}^p }{2}
		\geq
		\left\|\frac{x_1 - x_2}{2}\right\|_\infty^p
		= 1^p
		= 1.
		\label{eq:qinf-notclique:1}
	\end{align}
	\par Second, note that $\|x_i - x_{i'}\|_{\infty} \geq 1$ for all $i \neq i' \in [k]$ because for $u \in V \sm \{v_i,v_{i'}\}$, it holds that $[x_i]_{(i,u,i',v_{i'})} = 0$ and $[x_{i'}]_{(i,u,i',v_{i'})} = 1$. Thus by Jensen's inequality on the convex function $y \mapsto \|x-y\|_{\infty}^p$ (Lemma~\ref{lem:pq-convexity}), we have that
	\begin{align}
		\sum_{i=3}^{k} \|x_i - y\|_{\infty}^p
		&=
		\frac{1}{(k-3)}  \sum_{3 \leq i < i' \leq k} \left( \|x_i  - y\|_{\infty}^p + \|x_{i'} - y\|_{\infty}^p \right) 
		\nonumber
		\\ &\geq
		\frac{2}{k-3} \sum_{3 \leq i < i' \leq k} \left\|\frac{x_i - x_{i'}}{2}\right\|_\infty^p \nonumber
		\\ &\geq \frac{1}{k-3} \cdot (k-2)(k-3) \cdot \frac{1}{2^{p}} \nonumber
		\\ &= \frac{k-2}{2^p}.
		\label{eq:qinf-notclique:2}
	\end{align}
	\par Therefore, since~\eqref{eq:qinf-notclique:1} and~\eqref{eq:qinf-notclique:2} hold for any $y \in \R^d$, we conclude that $\val_{p,\infty}(x_1,\ldots,x_k)
	= \min_y \sum_{i=1}^k \|x_i - y\|_{\infty}^p
	\geq 2 + (k-2)/2^p$.
\end{proof}

\begin{proof}[Proof of Lemma~\ref{lem:gap} for $q = \infty$]
	Given the $D$-regular, $n$-vertex graph $G$, integer $k > 0$, and parameters $p \in [1,\infty), q = \infty$, let the embedding $\{x_{i,j}\}_{i \in [k], j \in [n]}$ be given by $x_{i,j} = \xi(i,j)$ for all $i \in [k], j \in V \cong [n]$. This is $\poly(n,k)$-time computable with the formula \eqref{eq:embedding-qinf}, since the dimension satisfies $d \leq \poly(n,k)$.
	
	By Lemma~\ref{lem:qinf-clique}, if $\{v_1,\ldots,v_k\} \subset V \cong [n]$ is a $k$-clique in $G$, then $\val_{p,\infty}(x_{1,v_1},\ldots,x_{k,v_k}) \leq \gamma := k / 2^p$. Otherwise, $\val_{p,\infty}(x_{1,v_1},\ldots,x_{k,v_k}) \geq \gamma + \Delta$, where $\Delta := (2 + (k-2) / 2^p) - k / 2^p \geq 1$.
\end{proof}

\section{Discussion and outlook}\label{sec:conc}

The hardness results shown in this paper demonstrate that, under standard complexity-theoretic assumptions, it is impossible to compute arbitrarily close approximations for the high-dimensional Wasserstein barycenter problem in polynomial time. This motivates an interesting research direction about understanding what properties of distributions enable efficient computation of Wasserstein barycenters.

\par A first candidate could be to require all distributions to be uniform discrete distributions. Unfortunately, this does not help from a computational complexity perspective, as shown in the following extension of our main results. The proof is deferred to Appendix~\ref{sec:inapproxuniformproof}.

\begin{theorem}\label{thm:inapproxuniform}
	The statement of Theorem~\ref{thm:inapproxext} holds even when the distributions $\mu_1,\ldots,\mu_k$ are restricted to be uniform on their support.
\end{theorem}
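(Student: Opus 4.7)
The approach is to reduce the hardness established in Theorem~\ref{thm:inapproxext} to the uniform-marginal setting via a duplication trick. Specifically, the plan is to show that any instance of the generalized Wasserstein barycenter problem whose marginals have rational weights with polynomially-bounded common denominator is equivalent, in the sense of having the same value of $\optbary$, to an instance with uniform marginals on a polynomial-size support. Applying this transformation to the adversarial instances produced in the reduction underpinning Theorem~\ref{thm:inapproxext} then transfers the hardness to the uniform-marginal case.

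The duplication equivalence works as follows. Suppose $\mu_i = \sum_{j=1}^{n} p_{i,j} \delta_{x_{i,j}}$ with $p_{i,j} = a_{i,j}/N$ for non-negative integers $a_{i,j}$ summing to $N$. Define $\tilde{\mu}_i$ to be the uniform distribution on the multiset containing $a_{i,j}$ copies of $x_{i,j}$ for each $j \in [n]$. Appealing to the $\MOT$ formulation (Proposition~\ref{prop:mot}), I would show that $\optbary(\mu_1, \dots, \mu_k) = \optbary(\tilde{\mu}_1, \dots, \tilde{\mu}_k)$ via a value-preserving correspondence between couplings in $\cM(\mu_1,\dots,\mu_k)$ and couplings in $\cM(\tilde{\mu}_1,\dots,\tilde{\mu}_k)$: any $P' \in \cM(\mu_1,\dots,\mu_k)$ lifts to a coupling of equal cost by distributing $P'_{\vec{j}}$ uniformly across the $\prod_i a_{i,j_i}$ tuples of duplicates, and, conversely, any coupling in $\cM(\tilde{\mu}_1,\dots,\tilde{\mu}_k)$ aggregates to a coupling in $\cM(\mu_1,\dots,\mu_k)$ of equal cost by summing the masses of all duplicate tuples. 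Crucially, the underlying support vectors $\{x_{i,j}\}$ are preserved, so the induced cost tensor from Proposition~\ref{prop:mot} is unchanged (up to the obvious lifting to the duplicated index set).

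The second step is to apply this equivalence to the reduction underpinning Theorem~\ref{thm:inapproxext}, which converts a polynomial-time approximation algorithm for the generalized Wasserstein barycenter problem into one for $\chubpq$ via Proposition~\ref{prop:mot} and Proposition~\ref{prop:amin}. Each query made by Proposition~\ref{prop:amin} is an $\MOT$ instance with cost tensor $C$ (equivalently, a generalized Wasserstein barycenter instance) on the same support vectors $\{x_{i,j}\}$, differing only in the marginals. If these queried marginals admit a common denominator $N = \poly(n,k,d,\log U, R_{p,q}/\eps)$, then by the duplication argument each query can be replaced by an equivalent uniform-marginal query of polynomial size, and the entire chain of reductions goes through after substituting the hypothetical uniform-case algorithm in place of a general one. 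This would contradict the $\NP$-hardness of $\chubpq$ established in Lemma~\ref{lem:minexthard}.

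The main obstacle is to ensure that the marginals queried by Proposition~\ref{prop:amin} can be taken to have a polynomial common denominator, rather than merely polynomial bit complexity (which would only yield a common denominator of $2^{\poly}$ and hence an exponential blow-up in support size). Inspection of the convex-relaxation algorithm from~\citep{AltBoi20mothard} suggests that the iterates of this polynomial-time zero-th-order routine can be chosen with polynomial common denominator. If this does not hold directly, a fallback is to first round each queried marginal $\mu_i$ to a nearby rational marginal $\tilde{\mu}_i$ with denominator $N = \poly(n,k,R_{p,q}/\eps)$; the resulting error in $\optbary$ can be bounded by $O(R_{p,q} \max_i \TotalVariation(\mu_i, \tilde{\mu}_i)) = O(nR_{p,q}/N)$ via a standard Wasserstein-continuity estimate, which is made smaller than $\eps$ by choosing $N$ a sufficiently large polynomial in $n$ and $R_{p,q}/\eps$.
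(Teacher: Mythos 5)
There is a genuine gap: your duplication step does not produce distributions that are uniform on their support. If $x_{i,j}$ is listed $a_{i,j}$ times with weight $1/N$ each, the resulting \emph{measure} still places mass $a_{i,j}/N$ on the single point $x_{i,j}$; it is uniform only as a labelled multiset, not as a distribution on its set of distinct support points, which is what the theorem requires. To get genuinely uniform measures you must split each atom of mass $m/N$ into $m$ \emph{distinct} points, and at that moment the exact value-preserving correspondence you describe is lost: you must control how much the barycenter functional moves under the perturbation. This is precisely the content of the paper's proof: it quantizes each $\mu_i$ to masses that are multiples of $1/N$, splits each atom of mass $m/N$ into $m$ distinct atoms within $\ell_q$-distance $\eps/(p2^p)$ of the original, and shows $\big|\sum_{i=1}^k \lambda_i \cW_{p,q}^p(\mu_i,\nu) - \sum_{i=1}^k \lambda_i \cW_{p,q}^p(\mu_i',\nu)\big| \leq \eps$ uniformly over candidate barycenters $\nu$, using the triangle inequality for $\cW_{p,q}$ together with the bound $|a^p-b^p| \leq p2^{p-1}|a-b|$ on $[0,2]$, with $N = \Theta(n p 2^p/\eps)$. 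This approximation argument, which your proposal omits, is the heart of the matter.

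Second, your detour through the internal queries of Proposition~\ref{prop:amin} is both unnecessary and where the unresolved uncertainty in your argument sits: whether the zeroth-order iterates of the algorithm from~\citep{AltBoi20mothard} have polynomially bounded common denominator is not something you can read off its statement, and your TV-rounding fallback only patches the denominator issue, not the multiset issue above. The paper avoids all of this by reducing at the level of barycenter instances: given a hypothetical polynomial-time algorithm for uniform-marginal instances, it converts the single arbitrary-marginal input instance into a nearby uniform instance as above and runs that algorithm, thereby approximating the general problem to within $O(\eps)$ and contradicting Theorem~\ref{thm:inapproxext} used as a black box. Your rounding fallback is essentially the paper's quantization step, just deployed inside a more fragile part of the reduction; if you combine it with the splitting-into-distinct-points step and apply it directly to the input instance, you recover the paper's argument.
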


\par Nevertheless, there is a growing body of work that shows that other assumptions do help, both in theory and practice. For example, polynomial-time computation of Wasserstein barycenters is possible for certain parametric families of high-dimensional distributions such as Gaussian distributions, or more generally location-scatter families~\citep{Alt21bures}. There is also recent work that shows promising empirical results for computing barycenters of high-dimensional distributions if they are supported on low-dimensional manifolds or are represented by a convolutional neural network generative model~\citep{cohen2020estimating}. Other assumptions that might be interesting to investigate are if the input distributions $\mu_i$ are drawn from some generative process, or if the points in their supports lie in structured geometric configurations. Further understanding what commonly arising properties of distributions ensure efficient computation would have immediate impact on the many data-science applications of Wasserstein barycenters.

\section*{Acknowledgments} We are grateful to Victor-Emmanuel Brunel, Jonathan Niles-Weed, and Pablo Parrilo for stimulating conversations, and to the anonymous reviewers for their insightful comments which have greatly improved the clarity of the exposition.

\appendix

\section{Deferred proof details}
\subsection{Proof of Proposition~\ref{prop:clique-special}}\label{supp:clique}

$\NP$-hardness of the standard $\clique$ problem is shown in~\citep{karp1972reducibility}. To show that we can restrict to $G$ being a regular graph, note that~\citep{mohar2001face} shows that it is $\NP$-hard to find a maximum independent set in regular graphs. The claim follows since a (maximum) independent set in $G$ is a (maximum) clique in the complement graph. Furthermore, we may assume $k$ is even without loss of generality: otherwise, if $k$ is is odd, consider instead the graph $G'$ that is two copies of $G$ plus edges between all pairs of vertices that lie in the different copies of $G$. Note that $S$ is a $k$-clique in $G$ if and only if the two copies of $S$ together form a $(2k)$-clique in $G'$. Also, $G'$ is still regular since $G$ is.

\subsection{Proof of Lemma~\ref{lem:whykst}}\label{app:lem-whykst}

We verify the four properties of a $(k,D(k-1),|E(v_1,\ldots,v_k)|)$-collection.
\begin{itemize}
	\item[(i)] For any $i \in [k]$, we have \begin{align*}\|\phi(i,v_i)\|_0 &= |\{(i,v_i,\ell',u') : (v_i,u') \in E, i < \ell' \leq k\} \cup \{(\ell,u,i,v_i) : (u,v_i) \in E, 1 \leq \ell < i\}| \\
		&= D(k-i) + D(i-1) = D(k-1),\end{align*}
	since the graph is $D$-regular.
	\item[(ii)] For any $i < i' \in [n]$, if $(v_i,v_{i'}) \in E$ , then $\phi(i,v_i)$ and $\phi(i',v_{i'})$ share exactly one non-zero entry: the entry $(i,v_i,i',v_{i'})$.
	\item[(iii)] Otherwise, for any $i < i' \in [n]$, if $(v_i,v_{i'}) \not\in E$, then $\phi(i,v_i)$ and $\phi(i',v_{i'})$ share no common non-zero entries.
	\item[(iv)] For any entry $(\ell,u,\ell',u')$, if $[\phi(i,v_i)]_{(\ell,u,\ell',u')} = 1$ then $(i,v_i) \in \{(\ell,u),(\ell',u')\}$ by definition, so indeed we have $|\{i : [\phi(i,v_i)]_{(\ell,u,\ell',u')} \neq 0\}| \leq |\{\ell,\ell'\}| \leq 2$.
\end{itemize}

\subsection{Proof of Lemma~\ref{lem:qequalsonegap}}\label{sm:q1}

We first prove a helper lemma.
\begin{lemma}\label{lem:q1-tau}
	For any $\ell < \ell' \in [k]$, we have $\sum_{i \in [k] \setminus \{\ell,\ell'\}} \tau(\ell,\ell',i) = 0$.
\end{lemma}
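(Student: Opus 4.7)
The plan is to recognize that the expression $|[i] \setminus \{\ell, \ell'\}|$ has a very clean behavior when $i$ is restricted to vary over $T := [k] \setminus \{\ell, \ell'\}$, allowing the sum to telescope into a simple alternating sum. Concretely, I would reindex by the value of $|[i] \setminus \{\ell, \ell'\}|$ itself, rather than by $i$.

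First, I would observe that as $i$ increases through the elements of $T$ in increasing order, the set $[i] \setminus \{\ell, \ell'\}$ grows by exactly one element at each step: namely $i$ itself, which belongs to $T$ (so it is neither $\ell$ nor $\ell'$). Thus $|[i] \setminus \{\ell, \ell'\}|$ increases by exactly $1$ each time we move to the next element of $T$. A brief case check on whether $\ell$ equals $1$ (and whether $\ell' = 2$) confirms that at the smallest element $i_0 \in T$, we have $|[i_0] \setminus \{\ell, \ell'\}| = 1$; in every case $[i_0]$ contains $i_0$ and possibly $\ell$ (if $\ell < i_0$), but removing $\{\ell, \ell'\}$ leaves exactly $\{i_0\}$.

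Since $|T| = k - 2$, it follows that as $i$ ranges over $T$, the values $|[i] \setminus \{\ell, \ell'\}|$ take each of $1, 2, \ldots, k-2$ exactly once. Therefore
\[
\sum_{i \in [k] \setminus \{\ell, \ell'\}} \tau(\ell, \ell', i)
= \sum_{i \in T} (-1)^{|[i] \setminus \{\ell, \ell'\}|}
= \sum_{j=1}^{k-2} (-1)^j.
\]
The final step invokes the hypothesis that $k$ is even (inherited from the regime of interest in Lemma~\ref{lem:gap} for $q=1$): then $k - 2$ is even and the alternating sum $\sum_{j=1}^{k-2} (-1)^j$ vanishes, completing the proof.

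There is essentially no obstacle here; the only mild subtlety is the verification that the smallest element $i_0$ of $T$ yields $|[i_0] \setminus \{\ell,\ell'\}| = 1$ (handled by splitting on whether $\ell = 1$ and whether $\ell' = 2$), and the explicit use of $k$ even to ensure the alternating sum cancels. This lemma is exactly the symmetry property that the parity-based sign $\tau$ was designed to satisfy, which is why the embedding $\psi$ from \eqref{eq:embedding-q1} forces the coordinates $[\psi(i,v)]_\sigma$ for $i \notin \{\ell,\ell'\}$ to sum to zero, enabling the downstream symmetry argument in Lemma~\ref{lem:qequalsonegap}.
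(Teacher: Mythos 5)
Your proof is correct, and it takes a genuinely different (and cleaner) route than the paper's. The paper splits the index set into the three intervals $\{1,\dots,\ell-1\}$, $\{\ell+1,\dots,\ell'-1\}$, $\{\ell'+1,\dots,k\}$, evaluates each partial sum of $\tau(\ell,\ell',i)=(-1)^{|[i]\setminus\{\ell,\ell'\}|}$ in closed form in terms of the parities of $\ell$ and $\ell'$ (using $k$ even on the last interval), and then checks the four parity cases. You instead reindex by the exponent itself: since consecutive elements of $T=[k]\setminus\{\ell,\ell'\}$ differ only by elements of $\{\ell,\ell'\}$, the quantity $|[i]\setminus\{\ell,\ell'\}|$ increases by exactly $1$ at each step through $T$, and it equals $1$ at the smallest element of $T$ (all smaller indices lie in $\{\ell,\ell'\}$), so the exponents run through $1,2,\dots,k-2$ exactly once and the sum is $\sum_{j=1}^{k-2}(-1)^j=0$ because $k$ is even. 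Your argument eliminates the parity case analysis entirely and makes transparent both why the lemma holds and exactly where evenness of $k$ is used (it genuinely is needed, e.g.\ the sum is $-1$ for $k=3$, $\ell=1$, $\ell'=2$); the paper's version is a more mechanical interval-by-interval computation. One tiny wording point: your base-case parenthetical says $[i_0]$ may contain $\ell$, but when $\ell=1$, $\ell'=2$ it contains both $\ell$ and $\ell'$; the conclusion $[i_0]\setminus\{\ell,\ell'\}=\{i_0\}$ is nonetheless correct in all cases, since every index below $i_0$ lies in $\{\ell,\ell'\}$.
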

\begin{proof}
	The sum over $i \in \{1, \dots, \ell-1\}$ equals $-\mathds{1}[\ell \text{ is even}]$. The sum over $i \in \{\ell+1, \dots, \ell'-1\}$ equals $(-1)^{\ell}\mathds{1}[\ell' - \ell \text{ is even}]$. The sum over $i \in \{\ell'+1, \dots, k\}$ equals $\mathds{1}[\ell' \text{ is odd}]$ since $k$ is even. There are four cases arising from whether $\ell,\ell'$ are even. In all cases, the total is readily checked to be $0$.
\end{proof}

\begin{proof}[Proof of Lemma~\ref{lem:qequalsonegap}]
	For shorthand, let $x_i = \psi(i,v_i) \in \{-1,0,1\}^d$ for $i \in [k]$. By Jensen's inequality on the convex function $t \mapsto t^p$ and separability of the $\ell_1$ norm, we have
	\begin{align}
		\val_{p,1}(x_1,\ldots,x_k)
		= \min_{y \in \R^d} \sum_{i=1}^k \|x_i - y\|^p_1 \geq\min_{y \in \R^d} k\left(\frac{1}{k}\sum_{i=1}^k \|x_i - y\|_1\right)^p 
		=
		k^{1-p} \left( \sum_{\sigma \in \Sigma} \min_{y_\sigma \in \R} \sum_{i=1}^k \big|[x_i]_{\sigma} - y_{\sigma} \big| \right)^p
		\label{eq:lem-q1:1}
	\end{align}
	We explicitly solve the latter univariate minimization over each coordinate $y_{\sigma}$ in closed form. To this end, let $T = \{(i,v_i) \}_{i \in [k]}$ and define the partition the index set $\Sigma
	=
	\bigcup_{(a,b,c,s) \in \{0,1\}^3 \times \{-1,1\}} A_{a,b,c,s}$
	where
	\[
	A_{a,b,c,s} = \big\{\sigma = (\ell,u,\ell',u',s) \in \Sigma \;:\; |\{(\ell,u)\} \cap T| = a, \; |\{(\ell',u')\} \cap T| = b, \; |\{(u,u')\} \cap E| = c\big\}.
	\]
	Note that if $\sigma = (\ell,u,\ell',u',s) \in A_{a,b,c,s}$, then
	\begin{align*}
		\sum_{i=1}^k |[x_i]_{\sigma} - y_{\sigma}|
		&= \sum_{i \in \{\ell,\ell'\}} |[x_i]_{\sigma} - y_{\sigma}| + \sum_{i \not\in \{\ell,\ell'\}} |\tau(\ell,\ell',i) - y_{\sigma}|  \\ 
		&= \sum_{i \in \{\ell,\ell'\}} |[x_i]_{\sigma} - y_{\sigma}| + (k/2 - 1)|1 - y_{\sigma}| + (k/2 - 1)|1 + y_{\sigma}| \\
		&= (a+bc)|s - y_{\sigma}| + b(1-c) | s + y_{\sigma}| + (2-a-b)|y_{\sigma}| + (k/2 - 1)(|1 - y_{\sigma}| + |1 + y_{\sigma}|).
	\end{align*}
	where above the second equality is by Lemma~\ref{lem:q1-tau}. Thus, by a direct calculation,
	\begin{equation}\label{eq:acdevalues}
		\min_{y_{\sigma} \in \R} \sum_{i=1}^k |[x_i]_{\sigma} - y_{\sigma}| = \begin{cases} k-2, & a+b=0 \mbox{\quad (achieved by } y_{\sigma} = 0 \mbox{)}\\
			k-1, & a+b=1 \mbox{\quad (achieved by } y_{\sigma} = 0 \mbox{)} \\
			k, & a+b=2, c = 0 \mbox{\quad (achieved by } y_{\sigma} = 0 \mbox{)} \\
			k-2, & a+b=2, c = 1 \mbox{\quad (achieved by } y_{\sigma} = s \mbox{)}
		\end{cases}
	\end{equation}
	Further, since $t = |E(v_1,\ldots,v_k)|$, we have \begin{equation}\label{eq:acdesizes}
		|A_{a,b,c,s}| = \begin{cases} \binom{k}{2}(n-1)^2, & a+b=0 \\
			2\binom{k}{2}(n-1), & a+b=1 \\
			\binom{k}{2} - t, & a+b=2, c = 0 \\
			t, & a+b=2, c = 1 \\
		\end{cases}
	\end{equation}
	Therefore by combining~\eqref{eq:lem-q1:1},~\eqref{eq:acdevalues}, and~\eqref{eq:acdesizes}, and simplifying, we conclude the desired bound
	\begin{align*}
		\val_{p,1}(x_1, \dots, x_k)
		&\geq k^{1-p}\left( \sum_{(a,b,c,s) \in \{0,1\}^3 \times \{-1,1\}} \sum_{\sigma \in A_{a,b,c,s}} \min_{y_{\sigma} \in \R} |[x_i]_{\sigma} - y_{\sigma}|\right)^p  
		\\ &= k^{1-p}2^p \left(\binom{k}{2}(n-1)^2 (k-2) + 2\binom{k}{2}(n-1)(k-1) + \left(\binom{k}{2}-t\right)k + t(k-2)\right)^p 
		\\
		&= k^{1-p}\left(nk(k-1)(nk - 2n + 2) - 4t\right)^p.
	\end{align*}
	
	\par Next, we show that this bound holds with equality when $t = \binom{k}{2}$. To do this, note that it suffices to explicitly construct $y^* \in \R^d$ satisfying
	\begin{align}
		\|x_i - y^*\|_1 = n(k-1)(nk - 2n + 2) - 2(k-1)
		\label{eq:q1-tight}
	\end{align}
	for each $i \in [k]$, since then plugging in $\val_{p,1}(x_1, \dots, x_k) = \min_{y \in \R^d} \sum_{i=1}^k \|x_i - y\|_1^p \leq \sum_{i=1}^k \|x_i - y^*\|_1^p$ finishes the proof. To this end, construct $y^* \in \R^d$ by setting $y_{\sigma}^* = s$ for all $\sigma \in A_{1,1,1,s}$ and $s \in \{+1,-1\}$, and $0$ elsewhere. We now verify~\eqref{eq:q1-tight}. The sparsity pattern of $y_{\sigma}^*$ implies
	\begin{align}
		\|x_i - y^*\|_1
		=
		\|x_i\|_1 + \sum_{s \in \{-1,1\}} \sum_{\sigma \in A_{1,1,1,s}} \big( \left|[x_i]_{\sigma} - s\right| - \left|[x_i]_{\sigma}\right| \big).
		\label{eq:q1-1}
	\end{align}
	The first term in~\eqref{eq:q1-1} is
	\begin{align}
		\|x_i\|_1 = 2\binom{k}{2}n^2 - 2 (k-1)n(n-1) = n(k-1)(nk-2n+2)
		\label{eq:q1-2}
	\end{align}
	by direct computation. To compute the second term in~\eqref{eq:q1-1}, observe that because $\{v_1,\dots,v_k\}$ forms a $k$-clique in $G$, the sum over $\sigma \in A_{1,1,1,s}$ is a sum over $\sigma = (\ell,v_{\ell}, \ell', v_{\ell'},s)$ for $\ell < \ell' \in [k]$. Consider two cases:
	\begin{itemize}
		\item If $i \in \{\ell, \ell'\}$, then $[x_{i}]_{\sigma} = s$, hence $|[x_i]_{\sigma} - s| - |x_i|_{\sigma} = |s - s| - |s| = -1$. 
		Therefore the contribution of this case to the second term in~\eqref{eq:q1-1} is
		\begin{align}
			\sum_{s \in \{-1,1\}} \sum_{\substack{\ell < \ell' \in [k] \\ \text{s.t. } i \in \{\ell,\ell' \}}} \big( \left|[x_i]_{\sigma} - s\right| - \left|[x_i]_{\sigma}\right| \big) 
			= 
			-2(k-1).
			\label{eq:q1-3}
		\end{align}
		\item Else if $i \notin \{\ell,\ell'\}$, then $[x_i]_{\sigma} = \tau(\ell,\ell',i)$, hence $|[x_i]_{\sigma} - s| - |x_i|_{\sigma} = |\tau(\ell,\ell',i)-s| -1$. Therefore the contribution of this case to the second term in~\eqref{eq:q1-1} is
		\begin{align}
			\sum_{s \in \{-1,1\}} \sum_{\substack{\ell < \ell' \in [k] \\ \text{s.t. } i \notin \{\ell,\ell' \}}} \big( \left|[x_i]_{\sigma} - s\right| - \left|[x_i]_{\sigma}\right| \big) 
			= 
			\sum_{\substack{\ell < \ell' \in [k] \\ \text{s.t. } i \notin \{\ell,\ell' \}}}
			\left(|\tau(\ell,\ell',i)+1| + |\tau(\ell,\ell',i)-1| - 2 \right)
			= 0
			\label{eq:q1-4}
		\end{align}
		where the last step is because $|1+z| + |1-z| = 2$ for $z \in \{-1,1\}$.
	\end{itemize}
	Combining~\eqref{eq:q1-1},~\eqref{eq:q1-2},~\eqref{eq:q1-3}, and~\eqref{eq:q1-4} now yields the desired identity~\eqref{eq:q1-tight}.
\end{proof}

\subsection{Proof of Theorem~\ref{thm:inapproxuniform}}\label{sec:inapproxuniformproof}
Here we prove Theorem~\ref{thm:inapproxuniform}, which is the extension of our main approximation hardness result, Theorem~\ref{thm:inapproxext}, to the case where the input distributions are additionally restricted to be uniform. This theorem is restated fully below for convenience. Recall that $R_{p,q} = \max_{x,x' \in \cup_{i \in [k]} \mathrm{supp}(\mu_i)} \|x-x'\|_q^p$ denotes the $p$-th power of the $\ell_q$-norm diameter of the supports of the input measures.
\begin{theorem}
	Fix $p \in [1,\infty)$ and $q \in [1,\infty]$. Assuming $\NP \not\subset \BPP$, there does not exist a randomized algorithm that given uniform distributions $\mu_1,\ldots,\mu_k$ and weights $\lambda_1,\ldots,\lambda_k = 1/k$, computes the value of the Wasserstein barycenter problem \eqref{eq:baryext} to $\eps$ additive accuracy with probability at least $2/3$ in $\mathrm{poly}(n,k,d,\log U, R_{p,q} / \eps)$ time. 
\end{theorem}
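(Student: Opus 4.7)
The plan is to adapt the reduction underlying Theorem~\ref{thm:inapproxext} by simulating its (possibly non-uniform) $\MOT_C$ oracle queries with uniform-marginal barycenter computations, via an atom-duplication-with-perturbation trick. Recall that the chain CLIQUE $\to$ $\chubpq$ (Lemma~\ref{lem:minexthard}) $\to$ approximate $\MOT_C$ (Proposition~\ref{prop:amin}) $\to$ approximate barycenter (Proposition~\ref{prop:mot}) underlying Theorem~\ref{thm:inapproxext} submits $\poly(n,k)$ queries to an $\MOT_C$ oracle with various marginals $\mu_1,\dots,\mu_k$. Inspecting the zeroth-order convex-optimization procedure in the proof of Proposition~\ref{prop:amin}, we may assume without loss of generality that each queried $\mu_i \in \Delta_n$ has rational entries of polynomial bit length, i.e., $\mu_i = (q_{i,j}/N)_{j \in [n]}$ for integers $q_{i,j} \geq 0$ with a common denominator $N = \poly(n,k,R_{p,q}/\eps)$.

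For each such query, I would construct an equivalent uniform-marginal instance as follows. Replace each atom $x_{i,j} \in \R^d$ with $q_{i,j}$ distinct copies $\tilde x_{i,j,c} \defeq (x_{i,j},\, \delta \cdot e_{i,j,c}) \in \R^{d+M}$ for $c \in [q_{i,j}]$, where $M = kN$, the vectors $\{e_{i,j,c}\}$ are distinct standard basis vectors in $\R^M$, and $\delta > 0$ is a polynomially small parameter fixed below. Because each copy has its own unique perturbation coordinate, the new measure $\tilde\mu_i$ is uniform on exactly $N$ distinct atoms, i.e., uniform on its support. The simulation then invokes the hypothetical uniform-marginal barycenter approximation algorithm on $(\tilde\mu_1,\dots,\tilde\mu_k)$ and returns its answer as the approximate $\MOT_C$ value for the original query.

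The heart of the argument is verifying that this simulation is faithful. There is a value-preserving bijection between $\cM(\mu_1,\dots,\mu_k)$ and $\cM(\tilde\mu_1,\dots,\tilde\mu_k)$: any coupling $P$ with original marginals lifts to $\tilde P$ with uniform marginals by distributing mass $P_{\vec{j}}$ evenly among its $\prod_i q_{i,j_i}$ copy-tuples, and conversely aggregates back by summing over copy-tuples, with the marginal conditions matching by direct computation. Furthermore, for any lifted tuple the perturbed cost $F_{p,q}(\tilde x_{1,c_1},\dots,\tilde x_{k,c_k})$ differs from the unperturbed $F_{p,q}(x_{1,j_1},\dots,x_{k,j_k})$ by a Lipschitz amount $O(pk\delta (kR_{p,q})^{(p-1)/p})$. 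This bound is obtained by substituting the unperturbed optimizer $y^*$ as a candidate in the perturbed $\min_y$, using the a priori estimate $\|x_{i,j_i}-y^*\|_q \leq (kR_{p,q})^{1/p}$ that follows from the optimality of $y^*$ (via the bound $\sum_i \|x_{i,j_i}-y^*\|_q^p \leq k R_{p,q}$ obtained by comparing $y^*$ against the choice $y=x_{1,j_1}$), and then applying the triangle inequality together with the expansion $(a+\delta)^p \leq a^p + p(a+\delta)^{p-1}\delta$. Consequently the two MOT values agree up to this same tolerance.

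The main obstacle is balancing the perturbation size $\delta$: if $\delta$ is too large the Lipschitz error exceeds the gap $\Delta$ from Lemma~\ref{lem:gap}, while if it is too small the bit complexity of the instance explodes. Both concerns are handled simultaneously by setting $\delta = (nk)^{-C}$ for a sufficiently large constant $C = C(p,q)$; this keeps the Lipschitz error strictly below $\Delta/3$ while ensuring the resulting instance has polynomial parameters (support size $N = \poly(n,k,R_{p,q}/\eps)$, dimension $d + kN$, new diameter $R'_{p,q} \leq (R_{p,q}^{1/p}+2\delta)^p = O(R_{p,q})$, and polynomial bit complexity). Concluding via a standard Chernoff boosting step together with a union bound over the $\poly(n,k)$ queries from Proposition~\ref{prop:amin} (exactly as in the proof of Theorem~\ref{thm:inapprox}) shows that any polynomial-time randomized approximation algorithm for uniform-marginal barycenters would place CLIQUE in $\BPP$, contradicting $\NP \not\subset \BPP$.
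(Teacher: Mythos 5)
Your strategy is workable, and its technical core---replace each atom of mass $q_{i,j}/N$ by $q_{i,j}$ slightly perturbed copies of mass $1/N$, then bound the effect of the perturbation on the optimal value---is the same device the paper uses. The paper, however, applies it once and directly at the level of the barycenter instance: it shows that any input $\mu_1,\dots,\mu_k$ can be replaced by uniform measures $\mu_1',\dots,\mu_k'$ with $\abs{\sum_i \lambda_i \cW_{p,q}^p(\mu_i,\nu)-\sum_i\lambda_i\cW_{p,q}^p(\mu_i',\nu)}\le\eps$ for every candidate $\nu$, and then invokes Theorem~\ref{thm:inapproxext} as a black box. This avoids re-opening Proposition~\ref{prop:amin} and reasoning about individual $\MOT_C$ oracle queries, which is where your version runs into trouble.

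The genuine gap is your ``without loss of generality'' step. Polynomial running time of the algorithm in Proposition~\ref{prop:amin} only guarantees that each queried probability $\mu_{i,j}$ has polynomially many \emph{bits}; it does not give a common denominator $N$ that is polynomially bounded \emph{as a number} (entries of the form $a/2^{L}$ with $L=\poly(n,k)$ force $N=2^{L}$), and you need $N\le\poly(n,k,R_{p,q}/\eps)$ because $N$ is the support size of your uniform measures. Moreover, Proposition~\ref{prop:amin} is quoted only in simplified, black-box form, so its queries cannot simply be ``inspected.'' To close the gap you must round each queried marginal to the grid $\tfrac1N\Z^n$ and prove a stability estimate showing that this rounding changes the value of $\MOT_C$ (equivalently, of the barycenter functional) by a controlled additive amount; this estimate is precisely the missing ingredient and is the heart of the paper's proof, where quantization moves at most $n/N$ mass per measure across a bounded-diameter support, making $\cW_{p,q}(\mu_i,\tilde\mu_i)$ polynomially small in $1/N$, after which the triangle inequality and the elementary bound $\abs{a^p-b^p}\le p2^{p-1}\abs{a-b}$ for $a,b\in[0,2]$ give additive error $\eps$ once $N=\Theta(np2^p/\eps)$. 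With that step added (and with the minor bookkeeping that your per-query perturbation error is amplified by the $(nk)^{\alpha}$ factor of Proposition~\ref{prop:amin} before being compared with the gap $\Delta$ of Lemma~\ref{lem:gap}), your reduction goes through; the remaining components---the value-preserving lifting of couplings under atom duplication and the Lipschitz bound for the $\delta$-perturbation---are sound.
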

\begin{proof}
	Suppose we are given arbitrary discrete measures $\mu_1,\ldots,\mu_k$, each supported on $n$ points in $\{x \in \RR^d : \|x\|_q \leq 1\}$.\footnote{The general case of larger (non-constant) $R_{p,q}$ follows from the scale-invariance of the quantity $ R_{p,q} / \eps$.}
	We make the following claim: there exist discrete measures $\mu'_1,\dots,\mu'_k$ that (i) are each uniform over $N$ points in $\{x \in \RR^d : \|x\|_q \leq 1\}$ where $N \leq \poly(n,k,1/\eps)$, (ii) are  $\poly(N,d,\log U)$-time computable, and 
	(iii) preserve the barycenter functional to $\eps$ additive error in the sense that
	\begin{align}
		\abs{\sum_{i=1}^k \lambda_i \cW_{p,q}^p(\mu_i,\nu) - \sum_{i=1}^k \lambda_i \cW_{p,q}^p(\mu_i',\nu)} \leq \eps,
		\label{eq:unif-preserve}
	\end{align}
	for any measure $\nu$ supported on $\{x \in \RR^d : \|x\|_q \leq 1\}$.\footnote{I.e., for any $\nu$ that could be a candidate barycenter.}  The proof of the theorem follows by combining the claim with Theorem~\ref{thm:inapproxext}, because the claim allows one to reduce the problem of approximating the value of the barycenter of $\mu_1,\ldots,\mu_n$ to the problem of approximating the value of the barycenter of the uniform measures $\mu'_1,\ldots,\mu'_n$.
	\par We now give a proof of the claim. The measures $\mu'_i$ can be explicitly constructed as follows in two steps. Let $N$ be a positive integer to be chosen later.
	\begin{enumerate}
		\item \textit{Quantize}: The first step is to construct ``quantized'' measures $\tilde{\mu}_1,\ldots,\tilde{\mu}_k$.
		Denote the atoms of $\mu_i$ by $x_{i,j} \in \R^d$, and let $\mu_{i,j} \in [0,1]$ denote the corresponding masses. Define the distribution $\tilde{\mu}_i$ to have the same atoms $x_{i,j} \in \RR^d$, but now with masses $\tilde{\mu}_{i,j}$, chosen as follows: for every $i \in [k]$ and $j \in [n]$, quantize $\mu_{i,j}$ by rounding it to a multiple of $1/N$, i.e., choosing $\tilde{\mu}_{i,j} \in \{\lfloor \mu_{i,j} N\rfloor / N, \ceil{\mu_{i,j} N} / N\}$ for each $i \in [k], j \in [n]$, so that $\sum_{j \in [n]} \tilde{\mu}_{i,j} = 1$ for each $i \in [k]$.
		
		\item \textit{Split}: Now we may construct $\mu_1',\ldots,\mu'_k$ from $\tilde{\mu}_1,\ldots,\tilde{\mu}_k$ as follows: for any atom $x_{i,j}$ with $m/N$ mass, split it into $m$ distinct atoms $x_{i,j,1},\ldots,x_{i,j,m} \in \{x \in \RR^d : \|x\|_q \leq 1\}$ that are at distance $\|x_{i,j,\ell} - x_{i,j}\|_q \leq \eps/(p 2^{p})$ from the original atom, for each $\ell \in [m]$, and each have $1/N$ mass.
	\end{enumerate}
	
	\par To analyze step 1, consider a coupling between $\mu_i$ and $\tilde{\mu}_i$ given by the rounding transformation, where at most $1/N$ mass is moved for each of the $n$ atoms. Thus, $\cW_{p,q}(\mu_i,\tilde{\mu}_i)$ is at most the moved mass, which is at most $n/N$, times the $\ell_q$ diameter of the supports, which is at most $2$. 
	Thus by the triangle inequality, 
	\[
	\abs{\cW_{p,q}(\mu_i,\nu) - \cW_{p,q}(\tilde{\mu}_i,\nu)}
	\leq
	\cW_{p,q}(\mu_i,\tilde{\mu}_i)
	\leq 2n/N.
	\]
	To analyze step 2, we note that by the triangle inequality $$|\cW_{p,q}(\tilde{\mu}_i,\nu) - \cW_{p,q}(\mu'_i,\nu)| \leq \cW_{p,q}(\tilde{\mu}_i, \mu'_i) \leq \eps/(p 2^{p}),$$ where the second inequality holds because each atom is moved by at most distance $\eps/(p 2^{p})$ when constructing $\mu'_i$ from $\tilde{\mu}_i$. 
	So, overall, the triangle inequality gives $$|\cW_{p,q}(\mu_i,\nu) - \cW_{p,q}(\mu'_i,\nu)| \leq 2n/N + \eps/(p 2^{p}).$$
	Since $\nu$ is supported on $\{x : \|x\|_q \leq 1\}$, it follows that $\cW_{p,q}(\mu_i,\nu),\cW_{p,q}(\mu'_i,\nu) \in [0,2]$, thus
	\begin{align*}
		\abs{\cW_{p,q}^p(\mu_i,\nu) - \cW_{p,q}^p(\mu'_i,\nu)}
		&\leq 
		p 2^{p-1} \cdot \abs{\cW_{p,q}(\mu_i,\nu) - \cW_{p,q}(\mu'_i,\nu)} \\
		&\leq p 2^p \cdot n / N + \eps / 2 \\
		&\leq \eps. 
	\end{align*}
	In the last step, we take $N = \Theta(n p 2^p / \eps)$. This establishes~\eqref{eq:unif-preserve}, as desired.
\end{proof}

\small
\bibliographystyle{siamplain}
\bibliography{bary_hard}

\begin{thebibliography}{10}

\bibitem{AguCar11}
{\sc M.~Agueh and G.~Carlier}, {\em Barycenters in the {W}asserstein space},
  SIAM Journal on Mathematical Analysis, 43 (2011), pp.~904--924.

\bibitem{AltBoi20motalg}
{\sc J.~M. Altschuler and E.~Boix{-}Adser\`a}, {\em Polynomial-time algorithms
  for {M}ultimarginal {O}ptimal {T}ransport problems with structure}, arXiv
  pre-print arXiv:2008.03006,  (2020).

\bibitem{AltBoi20mothard}
{\sc J.~M. Altschuler and E.~Boix{-}Adser\`a}, {\em Hardness results for
  {M}ultimarginal {O}ptimal {T}ransport problems}, Discrete Optimization, 42
  (2021), p.~100669.

\bibitem{AltBoi20bary}
{\sc J.~M. Altschuler and E.~Boix{-}Adser{\`a}}, {\em Wasserstein barycenters
  can be computed in polynomial time in fixed dimension}, Journal of Machine
  Learning Research, 22 (2021), pp.~1--19.

\bibitem{Alt21bures}
{\sc J.~M. Altschuler, S.~Chewi, P.~Gerber, and A.~J. Stromme}, {\em Averaging
  on the {B}ures-{W}asserstein manifold: dimension-free convergence of gradient
  descent}, in Advances in Neural Information Processing Systems, 2021.

\bibitem{AlvEtAl16}
{\sc P.~C. {\'A}lvarez-Esteban, E.~Del~Barrio, J.~Cuesta-Albertos, and
  C.~Matr{\'a}n}, {\em A fixed-point approach to barycenters in {W}asserstein
  space}, Journal of Mathematical Analysis and Applications, 441 (2016),
  pp.~744--762.

\bibitem{anderes2016discrete}
{\sc E.~Anderes, S.~Borgwardt, and J.~Miller}, {\em Discrete {W}asserstein
  barycenters: Optimal transport for discrete data}, Mathematical Methods of
  Operations Research, 84 (2016), pp.~389--409.

\bibitem{arora2009computational}
{\sc S.~Arora and B.~Barak}, {\em Computational complexity: a modern approach},
  Cambridge University Press, 2009.

\bibitem{BenCarCut15}
{\sc J.-D. Benamou, G.~Carlier, M.~Cuturi, L.~Nenna, and G.~Peyr{\'e}}, {\em
  Iterative {B}regman projections for regularized transportation problems},
  SIAM Journal on Scientific Computing, 37 (2015), pp.~A1111--A1138.

\bibitem{benamou2019entropy}
{\sc J.-D. Benamou, G.~Carlier, S.~Di~Marino, and L.~Nenna}, {\em An entropy
  minimization approach to second-order variational mean-field games},
  Mathematical Models and Methods in Applied Sciences, 29 (2019),
  pp.~1553--1583.

\bibitem{benamou2016numerical}
{\sc J.-D. Benamou, G.~Carlier, and L.~Nenna}, {\em A numerical method to solve
  multi-marginal optimal transport problems with {C}oulomb cost}, in Splitting
  Methods in Communication, Imaging, Science, and Engineering, Springer, 2016,
  pp.~577--601.

\bibitem{benamou2019generalized}
{\sc J.-D. Benamou, G.~Carlier, and L.~Nenna}, {\em Generalized incompressible
  flows, multi-marginal transport and sinkhorn algorithm}, Numerische
  Mathematik, 142 (2019), pp.~33--54.

\bibitem{Bor17}
{\sc S.~Borgwardt}, {\em An {LP}-based, strongly-polynomial 2-approximation
  algorithm for sparse {W}asserstein barycenters}, Operational Research,
  (2020), pp.~1--41.

\bibitem{BorPat19}
{\sc S.~Borgwardt and S.~Patterson}, {\em On the computational complexity of
  finding a sparse {W}asserstein barycenter}, Journal of Combinatorial
  Optimization, 41 (2021), pp.~736--761.

\bibitem{Boyd04}
{\sc S.~Boyd and L.~Vandenberghe}, {\em Convex optimization}, Cambridge
  University Press, 2004.

\bibitem{carlier2010matching}
{\sc G.~Carlier and I.~Ekeland}, {\em Matching for teams}, Economic theory, 42
  (2010), pp.~397--418.

\bibitem{CarObeOud15}
{\sc G.~Carlier, A.~Oberman, and E.~Oudet}, {\em Numerical methods for matching
  for teams and {W}asserstein barycenters}, ESAIM: Mathematical Modelling and
  Numerical Analysis, 49 (2015), pp.~1621--1642.

\bibitem{chewi2020gradient}
{\sc S.~Chewi, T.~Maunu, P.~Rigollet, and A.~J. Stromme}, {\em Gradient descent
  algorithms for {B}ures-{W}asserstein barycenters}, in Conference on Learning
  Theory, 2020, pp.~1276--1304.

\bibitem{cohen2020estimating}
{\sc S.~Cohen, M.~Arbel, and M.~P. Deisenroth}, {\em Estimating barycenters of
  measures in high dimensions}, arXiv preprint arXiv:2007.07105,  (2020).

\bibitem{cuturi2013sinkhorn}
{\sc M.~Cuturi}, {\em Sinkhorn distances: lightspeed computation of optimal
  transport}, Advances in Neural Information Processing Systems, 26 (2013),
  pp.~2292--2300.

\bibitem{CutDou14}
{\sc M.~Cuturi and A.~Doucet}, {\em Fast computation of {W}asserstein
  barycenters}, in International Conference on Machine Learning, 2014,
  pp.~685--693.

\bibitem{de2014combinatorics}
{\sc J.~A. De~Loera and E.~D. Kim}, {\em Combinatorics and geometry of
  transportation polytopes: an update}, Discrete geometry and algebraic
  combinatorics, 625 (2014), pp.~37--76.

\bibitem{elvander2020multi}
{\sc F.~Elvander, I.~Haasler, A.~Jakobsson, and J.~Karlsson}, {\em
  Multi-marginal optimal transport using partial information with applications
  in robust localization and sensor fusion}, Signal Processing, 171 (2020),
  p.~107474.

\bibitem{FKJ}
{\sc P.~T. Fletcher, S.~Venkatasubramanian, and S.~Joshi}, {\em The geometric
  median on {R}iemannian manifolds with application to robust atlas
  estimation}, NeuroImage, 45 (2009), pp.~S143--S152.

\bibitem{frechet1948elements}
{\sc M.~Fr{\'e}chet}, {\em Les {\'e}l{\'e}ments al{\'e}atoires de nature
  quelconque dans un espace distanci{\'e}}, in Annales de l'institut Henri
  Poincar{\'e}, vol.~10, 1948, pp.~215--310.

\bibitem{gramfort2015fast}
{\sc A.~Gramfort, G.~Peyr{\'e}, and M.~Cuturi}, {\em Fast optimal transport
  averaging of neuroimaging data}, in International Conference on Information
  Processing in Medical Imaging, Springer, 2015, pp.~261--272.

\bibitem{haasler2020multi}
{\sc I.~Haasler, A.~Ringh, Y.~Chen, and J.~Karlsson}, {\em Multi-marginal
  optimal transport with a tree-structured cost and the {S}chr{\"o}dinger
  bridge problem}, arXiv preprint arXiv:2004.06909,  (2020).

\bibitem{haasler2021graphical}
{\sc I.~Haasler, R.~Singh, Q.~Zhang, J.~Karlsson, and Y.~Chen}, {\em
  Multi-marginal optimal transport and probabilistic graphical models}, IEEE
  Transactions on Information Theory,  (2021).

\bibitem{ho2019probabilistic}
{\sc N.~Ho, V.~Huynh, D.~Phung, and M.~Jordan}, {\em Probabilistic multilevel
  clustering via composite transportation distance}, in International
  Conference on Artificial Intelligence and Statistics, 2019, pp.~3149--3157.

\bibitem{ho2017multilevel}
{\sc N.~Ho, X.~L. Nguyen, M.~Yurochkin, H.~H. Bui, V.~Huynh, and D.~Phung},
  {\em Multilevel clustering via {W}asserstein means}, in International
  Conference on Machine Learning, 2017, pp.~1501--1509.

\bibitem{janati2020debiased}
{\sc H.~Janati, M.~Cuturi, and A.~Gramfort}, {\em Debiased {S}inkhorn
  barycenters}, in International Conference on Machine Learning, 2020,
  pp.~4692--4701.

\bibitem{karp1972reducibility}
{\sc R.~M. Karp}, {\em Reducibility among combinatorial problems}, in
  Complexity of Computer Computations, Springer, 1972, pp.~85--103.

\bibitem{KroDviDvuetal19}
{\sc A.~Kroshnin, N.~Tupitsa, D.~Dvinskikh, P.~Dvurechensky, A.~Gasnikov, and
  C.~Uribe}, {\em On the complexity of approximating {W}asserstein
  barycenters}, in International Conference on Machine Learning, 2019,
  pp.~3530--3540.

\bibitem{lin2020fixed}
{\sc T.~Lin, N.~Ho, X.~Chen, M.~Cuturi, and M.~Jordan}, {\em Fixed-support
  {W}asserstein barycenters: Computational hardness and fast algorithm},
  Advances in Neural Information Processing Systems, 33 (2020).

\bibitem{luise2019sinkhorn}
{\sc G.~Luise, S.~Salzo, M.~Pontil, and C.~Ciliberto}, {\em Sinkhorn
  barycenters with free support via {F}rank-{W}olfe algorithm}, in Advances in
  Neural Information Processing Systems, 2019, pp.~9322--9333.

\bibitem{mohar2001face}
{\sc B.~Mohar}, {\em Face covers and the genus problem for apex graphs},
  Journal of Combinatorial Theory, Series B, 82 (2001), pp.~102--117.

\bibitem{nenna2016numerical}
{\sc L.~Nenna}, {\em Numerical methods for multi-marginal optimal
  transportation}, PhD thesis, PSL Research University, 2016.

\bibitem{panaretos2019statistical}
{\sc V.~M. Panaretos and Y.~Zemel}, {\em Statistical aspects of {W}asserstein
  distances}, Annual review of statistics and its application, 6 (2019),
  pp.~405--431.

\bibitem{PeyCut17}
{\sc G.~Peyr{\'e} and M.~Cuturi}, {\em Computational optimal transport},
  Foundations and Trends in Machine Learning,  (2017).

\bibitem{schrijver2003combinatorial}
{\sc A.~Schrijver}, {\em Combinatorial optimization: polyhedra and efficiency},
  vol.~24, Springer Science \& Business Media, 2003.

\bibitem{shen2020sinkhorn}
{\sc Z.~Shen, Z.~Wang, A.~Ribeiro, and H.~Hassani}, {\em Sinkhorn barycenter
  via functional gradient descent}, Advances in Neural Information Processing
  Systems, 33 (2020).

\bibitem{singh2020context}
{\sc S.~P. Singh, A.~Hug, A.~Dieuleveut, and M.~Jaggi}, {\em Context mover’s
  distance \& barycenters: Optimal transport of contexts for building
  representations}, in International Conference on Artificial Intelligence and
  Statistics, 2020, pp.~3437--3449.

\bibitem{solomon2015convolutional}
{\sc J.~Solomon, F.~De~Goes, G.~Peyr{\'e}, M.~Cuturi, A.~Butscher, A.~Nguyen,
  T.~Du, and L.~Guibas}, {\em Convolutional {W}asserstein distances: Efficient
  optimal transportation on geometric domains}, ACM Transactions on Graphics,
  34 (2015), pp.~1--11.

\bibitem{SriLiDun18}
{\sc S.~Srivastava, C.~Li, and D.~B. Dunson}, {\em Scalable {B}ayes via
  barycenter in {W}asserstein space}, The Journal of Machine Learning Research,
  19 (2018), pp.~312--346.

\bibitem{staib2017parallel}
{\sc M.~Staib, S.~Claici, J.~M. Solomon, and S.~Jegelka}, {\em Parallel
  streaming {W}asserstein barycenters}, in Advances in Neural Information
  Processing Systems, 2017, pp.~2647--2658.

\bibitem{Vil03}
{\sc C.~Villani}, {\em Topics in optimal transportation}, no.~58, American
  Mathematical Society, 2003.

\bibitem{xu2018distilled}
{\sc H.~Xu, W.~Wang, W.~Liu, and L.~Carin}, {\em Distilled {W}asserstein
  learning for word embedding and topic modeling}, in Advances in Neural
  Information Processing Systems, vol.~31, 2018.

\bibitem{ye2017determining}
{\sc J.~Ye, Y.~Li, Z.~Wu, J.~Z. Wang, W.~Li, and J.~Li}, {\em Determining gains
  acquired from word embedding quantitatively using discrete distribution
  clustering}, in Proceedings of the Association for Computational Linguistics,
  2017, pp.~1847--1856.

\bibitem{ye2017fast}
{\sc J.~Ye, P.~Wu, J.~Z. Wang, and J.~Li}, {\em Fast discrete distribution
  clustering using {W}asserstein barycenter with sparse support}, IEEE
  Transactions on Signal Processing, 65 (2017), pp.~2317--2332.

\end{thebibliography}

\end{document}